\newtheorem{theorem}{Theorem}[section]
\newtheorem{lemma}[theorem]{Lemma}
\newtheorem{definition}[theorem]{Definition}
\newtheorem{corollary}[theorem]{Corollary}
\newtheorem{proposition}[theorem]{Proposition}
\newtheorem{hypothesis}{Hypothesis}[section]
\newtheorem{remark}{Remark}
\newcommand*{\myproofname}{Proof}
\renewcommand\Re{\operatorname{Re}}
\newcommand\Exp{\operatorname{Exp}}
\newcommand\tr{\operatorname{tr}}
\newcommand\Spec{\operatorname{Spec}}
\renewcommand\det{\operatorname{det}}
\newcommand{\End}{\mbox{End}}
\newcommand{\Dom}{\mbox{\rm Dom}}
\newcommand{\Gl}{\mbox{Gl}}
\title{Davies' method for heat-kernel estimates: An extension to the semi-elliptic setting}
\author{Evan Randles and Laurent Saloff-Coste}
\begin{document}
\maketitle

\begin{abstract}

We consider a class of constant-coefficient partial differential operators on a finite-dimensional real vector space which exhibit a natural dilation invariance. Typically, these operators are anisotropic, allowing for different degrees in different directions.  The “heat” kernels associated to these so-called
positive-homogeneous operators are seen to arise naturally as the limits of
convolution powers of complex-valued measures, just as the classical heat kernel appears in the central limit theorem. Building on the functional-analytic approach developed by E. B. Davies for higher-order uniformly elliptic operators with measurable coefficients, we formulate a general theory for (anisotropic) self-adjoint variable-coefficient operators, each comparable to a positive-homogeneous operator, and study their associated heat kernels. Specifically, under three abstract hypotheses, we show that the heat kernels satisfy off-diagonal (Gaussian type) estimates involving the Legendre-Fenchel transform of the operator's principle symbol. Our results extend those of E. B. Davies and G. Barbatis and partially extend results of A. F. M. ter Elst and D. Robinson.
\end{abstract}

\noindent{\small\bf Mathematics Subject Classification:} Primary 35K08; Secondary 35K25, 35H30\\

\noindent{\small\bf Keywords:} Heat kernel estimates, Semi-elliptic operators, quasi-elliptic operators, Legendre-Fenchel transform.

\section{Introduction}

In \cite{Davies1995}, E. B. Davies develops an abstract method for establishing off-diagonal estimates for the heat kernels of self-adjoint uniformly elliptic higher-order partial differential operators on $\mathbb{R}^d$. In particular, Davies considers a general self-adjoint operator of the form
\begin{equation*}
Hf(x)=\sum_{|\alpha|,|\beta|\leq m}D^{\alpha}\left\{a_{\alpha,\beta}(x)D^{\beta}f(x)\right\}
\end{equation*}
and studies the corresponding ``heat'' kernel, $K_H$, of $H$ and its properties; here, $D^{\gamma}=(-i\partial_{x_1})^{\gamma_1}(-i\partial_{x_2})^{\gamma_2}\cdots(-i\partial_{x_d})^{\gamma_d}$ for each multi-index $\gamma$. Of course, when it exists, $K_H=K_H(t,x,y)$ is the integral kernel for the semigroup $\{e^{-tH}\}$ on $L^2$ generated by $H$ and is also recognized as the fundamental solution to the parabolic equation
\begin{equation*}
(\partial_t+H)u=0.
\end{equation*}
When $H$ is uniformly elliptic, i.e., $H$ is comparable to the $m$-th power of the Laplacian $(-\Delta)^m$, and under certain conditions discussed below, the method yields the estimate
\begin{equation}\label{eq:EllipEst}
|K_H(t,x,y)|\leq \frac{C_1}{t^{d/2m}}\exp\left(-tC_2\left|\frac{x-y}{t}\right|^{2m/(2m-1)}+Mt\right)
\end{equation}
for $t>0$, $x,y\in\mathbb{R}^d$, where $C_1,C_2$ and $M$ are positive constants. For the canonical case in which $H=(-\Delta)^m$, this estimate, with $M=0$, is readily established using an optimization argument and the Fourier transform. As discussed in \cite{Randles2017}, the optimization therein naturally selects the function $x\mapsto C_2|x|^{2m/(2m-1)}$ as the Ledendre-Fenchel transform of the symbol (or Fourier multiplier) $|\xi|^{2m}$ of the operator $(-\Delta)^m$. We encourage the reader to see the articles \cite{Randles2017}, \cite{Barbatis1996} and \cite{Blunck2005} for discussion of the appearance of the Legendre-Fenchel transform in heat kernel estimates. In the case that $H$ is a second-order operator, i.e., $m=1$, this is the well-studied Gaussian estimate \cite{Saloff-Coste2010}. The applications of estimates of the form \eqref{eq:EllipEst} are legion. In particular, \eqref{eq:EllipEst} guarantees that the semigroup $\{e^{-tH}\}$ extends to a strongly continuous semigroup $\{e^{-tH_p}\}$ on $L^p$ for all $1\leq p<\infty$ and moreover their generators, $H_p$, have spectra independent of $p$ \cite{Davies1995}.

In the case that the coefficients $\{a_{\alpha,\beta}(x)\}$  of $H$ are bounded and H\"{o}lder continuous, Levi's parametrix method, adapted to parabolic equations by A. Friedman and S. D. Eidelman, guarantees that a continuous heat kernel $K_H$ exists and satisfies the estimate \eqref{eq:EllipEst} \cite{Friedman1964,Eidelman1969}. When the coefficients $\{a_{\alpha,\beta}\}$ are merely bounded and measurable, Davies' method yields the estimate \eqref{eq:EllipEst} subject to a dimension-order restriction that $d/2m<1$. The restriction can be weakened to $d/2m\leq 1$ by the method of \cite{Auscher1998,terElst1997} but it cannot be weakened any further \cite{Davies1997a,deGiorgi1968,Mazya1968}. Specifically, for each integer $m$ such that $d/2m>1$, Davies \cite{Davies1997a} constructs a uniformly elliptic self-adjoint operator $H$ of order $m$ (which is a system when $d$ is odd) with bounded coefficients (in fact, smooth away from the origin) whose semigroup $\{e^{-tH}\}$ cannot be extended to a strongly continuous semigroup on $L^p$ for all $1\leq p<\infty$ and therefore the estimate \eqref{eq:EllipEst} cannot hold. Further discussion of this example can be found in \cite{Davies1997}. 

Moving beyond the elliptic (isotropic) setting, in this article, we introduce a class of constant-coefficient partial differential operators, which we call positive-homogeneous operators. Introduced in \cite{Randles2017}, these are hypoelliptic operators that interact well with certain dilations of the underlying space and they play the role that $(-\Delta)^m$ plays in the elliptic theory. We then consider a class of variable-coefficient operators, each comparable to a positive-homogeneous operator and study their associated heat kernels. We show that Davies' method, with suitable modification, carries over into our naturally anisotropic setting. 

To motivate our study, consider the constant-coefficient operator
\begin{equation*}
\Lambda=-\partial_{x_1}^2+\partial_{x_2}^4
\end{equation*}
on $\mathbb{R}^2$. Though this operator is not elliptic, it has many properties shared by elliptic operators. It is, for example, hypoelliptic; this can be seen by studying its symbol,
\begin{equation*}
R(\xi)=R(\xi_1,\xi_2)=\xi_1^2+\xi_2^4.
\end{equation*}
As $(-\Delta)^m$ plays well with (isotropic) dilations of $\mathbb{R}^d$, $\Lambda$ has the property that 
\begin{equation*}
t\Lambda=\delta_{1/t}\circ \Lambda\circ \delta_t
\end{equation*}
for all $t>0$ where $\delta_t(f)(x_1,x_2)=f(t^{1/2}x_1,t^{1/4}x_2)$ is given by the anisotropic dilation $(x_1,x_2)\mapsto (t^{1/2}x_1,t^{1/4}x_2)$ of $\mathbb{R}^2$; for this reason, $\Lambda$ is said to be homogeneous. As discussed in \cite{Randles2015a}, the homogeneity of $\Lambda$ is essential for the appearance of its heat kernel
\begin{equation*}
K_{\Lambda}(t,x,y)=\frac{1}{\sqrt{2\pi}}\int_{\mathbb{R}^2}e^{-i(x-y)\cdot\xi}e^{-tR(\xi)}\,d\xi,
\end{equation*}
defined for $t>0$ and $x,y\in\mathbb{R}^2$, as an attractor for convolution powers of complex-valued functions, i.e., its appearance in local limit theorems. An optimization argument, similar to that for $K_{(-\Delta)^m}$, gives the estimate
\begin{equation}\label{eq:TestHKEst}
|K_{\Lambda}(t,x,y)|\leq\frac{C_1}{t^{\omega_{\Lambda}}}\exp\left(-tC_2R^{\#}\left(\frac{x-y}{t}\right)\right)
\end{equation}
for $t>0$ and $x,y\in\mathbb{R}^2$ where
\begin{equation*}
R^{\#}(x)=R^{\#}(x_1,x_2)=\left(\frac{x_1}{2}\right)^2+3\left(\frac{x_2}{4}\right)^{4/3}
\end{equation*}
is the Legendre-Fenchel transform of $R$ and $\omega_{\Lambda}=1/2+1/4=3/4$ is known as the homogeneous order associated to $\Lambda$.  As we shall see, the homogeneous order $\omega_{\Lambda}$ depends on the order of derivatives appearing in $\Lambda$ and on the dimension of the underlying space; it generalizes the exponent $d/2m$ appearing in the prefactor in \eqref{eq:EllipEst} governing small-time on-diagonal decay.

By analogy to the theory of self-adjoint uniformly elliptic operators and their heat kernel estimates, we then ask: For a self-adjoint variable-coefficient operator $H$ which is comparable to a homogeneous operator $\Lambda$ with symbol $R$, under what conditions will the heat kernel for $H$ exists and satisfy an estimate of the form
\begin{equation*}
|K_{H}(t,x,y)|\leq \frac{C_1}{t^{\omega_{\Lambda}}}\exp\left(-tC_2R^{\#}\left(\frac{x-y}{t}\right)+Mt\right)?
\end{equation*}
It was shown in \cite{Randles2017}, using Levi's parametrix method adapted to our naturally anisotropic setting, that the above estimate is satisfied provided, in particular, that $H$ has H\"{o}lder continuous coefficients (see also \cite{Eidelman1960}). In this article, we extend these results to the realm in which $H$ has bounded measurable coefficients. To this end, we employ the abstract method of E. B. Davies which we modify in two ways. First, we adapt Davies' single-variable optimization procedure, which produces the term in the exponent of \eqref{eq:EllipEst}, to a multivariate optimization procedure suitably adapted to our anisotropic setting. In this way, we see the natural appearance of the Legendre-Fenchel transform. Our second modification to the theory allows for the dimension-order restriction $d/2m<1$ ($\omega_{\Lambda}<1$ in our case) to be lifted provided that certain integer powers of $H$ also behave well in perturbation estimates.


\section{Preliminaries}

As discussed in \cite{Randles2017}, to introduce the class of model operators considered in this article, it is useful to work in a framework which is coordinate-free. In view of the anisotropic nature of the problem we want to study, it is important to be free to choose coordinate systems adapted to each particular operator $\Lambda$ at hand. To this end, we consider a $d$-dimensional real vector space $\mathbb{V}$ equipped with the standard smooth structure; we do not affix $\mathbb{V}$ with a norm or basis.  The dual space of $\mathbb{V}$ is denoted by $\mathbb{V}^*$ and the dual pairing is denoted by $\xi(x)$ for $x\in\mathbb{V}$ and $\xi\in\mathbb{V}^*$. Let $dx$ and $d\xi$ be Haar (Lebesgue) measures on $\mathbb{V}$ and $\mathbb{V}^*$, respectively, which we take to be suitably normalized so that our conventions for the Fourier transform and inverse Fourier transform, given below, make each unitary. Throughout this article, all functions on $\mathbb{V}$ and $\mathbb{V}^*$ are understood to be complex-valued. Given a non-empty open set $\Omega\subseteq \mathbb{V}$, the usual Lebesgue spaces are denoted by $L^p(\Omega)=L^p(\Omega,dx)$ and equipped with their usual norms $\|\cdot\|_p$ for $1\leq p\leq \infty$. In the case that $p=2$, the corresponding inner product on $L^2(\Omega)$ is denoted by $\langle\cdot,\cdot\rangle$. Of course, we will also work with $L^2(\mathbb{V}^*)
:=L^2(\mathbb{V}^*,d\xi)$; here the $L^2$-norm and inner product will be denoted by $\|\cdot\|_{2^*}$ and $\langle\cdot,\cdot\rangle_*$ respectively. The Fourier transform $\mathcal{F}:L^2(\mathbb{V})\to L^2(\mathbb{V}^*)$ and inverse Fourier transform $\mathcal{F}^{-1}:L^2(\mathbb{V}^*)\to L^2(\mathbb{V})$ are defined, initially, for Schwartz functions $f\in \mathcal{S}(\mathbb{V})$ and $g\in\mathcal{S}(\mathbb{V}^*)$ by the formulas
\begin{eqnarray*}
\mathcal{F}(f)(\xi)=\hat{f}(\xi)=\int_{\mathbb{V}}e^{i\xi(x)}f(x)\,dx  && (\xi\in\mathbb{V}^*)
\end{eqnarray*}
and
\begin{eqnarray*}
\mathcal{F}^{-1}(g)(x)=\check{g}(x)=\int_{\mathbb{V}^*}e^{-i\xi(x)}g(\xi)\,d\xi  && (x\in\mathbb{V}).
\end{eqnarray*}

\noindent The symbols $\mathbb{R,C,Z}$ mean what they usually do; $\mathbb{N}$ denotes the set of non-negative integers. The symbols $\mathbb{R}_+$ and $\mathbb{N}_+$ denote the set of strictly positive elements of $\mathbb{R}$ and $\mathbb{N}$, respectively, and $\mathbb{C}_+$ denotes the set of complex numbers $z$ for which $\Re(z)>0$. Also, $\mathbb{R}_+^d$ and $\mathbb{N}_+^d$ respectively denote the set of $d$-tuples of $\mathbb{R}_+$ and $\mathbb{N}_+$. Adopting the summation notation for semi-elliptic operators presented in L. H\"{o}rmander's treatise \cite{Hormander1983}, for a fixed $\mathbf{m}=(m_1,m_2,\dots,m_d)\in\mathbb{N}_+^d$, we write
\begin{equation*}
|\beta:\mathbf{m}|=\sum_{k=1}^d\frac{\beta_k}{m_k}
\end{equation*} for all multi-indices $\beta=(\beta_1,\beta_2,\dots,\beta_d)\in\mathbb{N}^d$.\\

\noindent For the rest of this section, $W$ will denote a $d$-dimensional real vector space (meaning $\mathbb{V}$ or $\mathbb{V}^*$) and $\Omega$ will denote an open subset of $W$. The space of smooth functions on $\Omega$ is denoted by $C^\infty(\Omega)$ and the space of smooth functions with compact support in $\Omega$ is denoted by $C_0^\infty(\Omega)$. Taking $C_0^\infty(\Omega)$ to be equipped with its usual topology given by semi-norms, its dual space, the space of distributions, is denoted by $\mathcal{D}'(\Omega)$. Given $w\in W$, the derivation $D_{w}:\mathcal{D}'(\Omega)\to\mathcal{D}'(\Omega)$ is originally defined for $f\in C_0^\infty(\Omega)$ by the formula
\begin{equation*}
(D_wf)(x)=i\partial_wf(x)=i\left(\lim_{t\to 0}\frac{f(x+tw)-f(x)}{t}\right)
\end{equation*}
for $x\in \Omega$. Further, given a basis $\mathbf{w}=\{w_1,w_2,\dots,w_d\}$ of $W$, we introduce, for each multi-index $\beta\in \mathbb{N}^d$, the differential operator $D_{\mathbf{w}}^{\beta}:\mathcal{D}'(\Omega)\to\mathcal{D}'(\Omega)$ defined by
\begin{equation*}
D_{\mathbf{w}}^\beta=(D_{w_1})^{\beta_1}(D_{w_2})^{\beta_2}\cdots(D_{w_d})^{\beta_d}.
\end{equation*}
We shall denote by $\End(W)$ and $\Gl(W)$ the set of endomorphisms and isomorphisms of $W$ respectively. Given $E\in\End(W)$, we consider the one-parameter group $\{t^E\}_{t>0}\subseteq \Gl(W)$ defined by 
\begin{equation*}
t^E=\exp((\log t)E)=\sum_{k=0}^{\infty}\frac{(\log t)^k}{k!}E^k
\end{equation*}
for $t>0$.  These one-parameter subgroups of $\Gl(W)$ allow us to define continuous one-parameter groups of operators on the space of distributions as follows: Given $E\in\End(W)$ and $t>0$, first define $\delta_t^E(f)$ for $f\in C_0^{\infty}(W)$ by $\delta_t^E(f)(x)=f(t^Ex)$ for $x\in W$. Extending this to the space of distributions on $W$ in the usual way, the collection $\{\delta_t^E\}_{t>0}$ is a continuous one-parameter group of operators on $\mathcal{D}'(W)$. In the next section, we shall use these one-parameter groups to define a notion of homogeneity for partial differential operators. Given $\alpha=(\alpha_1,\alpha_2,\dots,\alpha_d)\in\mathbb{R}_+^d$ and a basis $\mathbf{w}=\{w_1,w_2,\dots,w_d\}$ of $W$, we denote by $E_{\mathbf{w}}^\alpha$ the isomorphism of $W$ defined by
\begin{equation}\label{eq:DefofE}
E_{\mathbf{w}}^\alpha w_k=\frac{1}{\alpha_k}w_k
\end{equation}
for $k=1,2,\dots, d$.\\

\noindent Finally, given a basis $\mathbf{w}=\{w_1,w_2,\dots,w_d\}$ of $W$, we define the map $\phi_{\mathbf{w}}:W\rightarrow\mathbb{R}^d$ by setting $\phi_{\mathbf{w}}(w)=(x_1,x_2,\dots,x_d)$ whenever $w=\sum_{l=1}^d x_l w_l$. This map defines a global coordinate system on $W$; any such coordinate system is said to be a linear coordinate system on $W$. By definition, a polynomial on $W$ is a function $P:W\rightarrow\mathbb{C}$ that is a polynomial function in some (and hence any) linear coordinate system on $W$. Of course, in the linear coordinate system defined by $\mathbf{w}$, each polynomial can be expressed as a linear combination of monomials of the form 
\begin{equation}\label{eq:Monomial}
w_{\mathbf{w}}^{\beta}=(x_1)^{\beta_1}(x_2)^{\beta_2}\cdots(x_d)^{\beta_d}
\end{equation}
where $\beta=(\beta_1,\beta_2,\dots,\beta_d)\in\mathbb{N}^d$ and $\phi_\mathbf{w}(w)=(x_1,x_2,\dots,x_d)$ as above. We say that a polynomial $P$ is positive-definite if its real part, $R=\Re P$, is non-negative and has $R(w)=0$ only when $w=0$. \\

\section{Homogeneous operators}\label{sec:HomogeneousOperators}
In this section we introduce a class of homogeneous constant-coefficient partial differential operators on $\mathbb{V}$. These operators will serve as ``model'' operators in our theory in the way that integer powers of the Laplacian serve a model operators in the elliptic theory of partial differential equations. To this end, let $\Lambda$ be a constant-coefficient partial differential operator on $\mathbb{V}$ and let $P:\mathbb{V}^*\rightarrow\mathbb{C}$ be its symbol. Specifically, $P$ is the polynomial on $\mathbb{V}^*$ defined by $P(\xi)=e^{-i\xi(x)}\Lambda(e^{i\xi(x)})$ for $\xi\in\mathbb{V}^*$ (this is independent of $x\in\mathbb{V}$ precisely because $\Lambda$ is a constant-coefficient operator). We first introduce the following notion of homogeneity of operators; it is mirrored by an analogous notion for symbols which we define shortly. 

\begin{definition}
Given $E\in\End(\mathbb{V})$, we say that a constant-coefficient partial differential operator $\Lambda$ is homogeneous with respect to the one-parameter group $\{\delta_t^E\}$ if
\begin{equation*}
\delta_{1/t}^E\circ \Lambda\circ \delta_t^E=t\Lambda
\end{equation*}
for all $t>0$; in this case we say that $E$ is a member of the exponent set of $\Lambda$ and write $E\in\Exp(\Lambda)$. 
\end{definition}

\noindent A constant-coefficient partial differential operator $\Lambda$ need not be homogeneous with respect to a unique one-parameter group $\{\delta_t^E\}$, i.e., $\Exp(\Lambda)$ is not necessarily a singleton. For instance, it is easily verified that, for the Laplacian $-\Delta$ on $\mathbb{R}^d$,
\begin{equation*}
\Exp(-\Delta)=2^{-1}I+\mathfrak{o}_d
\end{equation*}
where $I$ is the identity and $\mathfrak{o}_d$ is the Lie algebra of the orthogonal group, i.e., is given by the set of skew-symmetric matrices.\\

\noindent Given a constant coefficient operator $\Lambda$ with symbol $P$, one can quickly verify that $E\in\Exp(\Lambda)$ if and only if
\begin{equation}\label{eq:homofsymbol}
tP(\xi)=P(t^F\xi)
\end{equation}
for all $t>0$ and $\xi\in\mathbb{V}^*$ where $F=E^*$ is the adjoint of $E$. More generally, if $P$ is any continuous function on $W$ and \eqref{eq:homofsymbol} is satisfied for some $F\in\End(W)$, we say that $P$ \textit{is homogeneous with respect to} $\{t^F \}$ and write $F\in\Exp(P)$. This admitted slight abuse of notation should not cause confusion. In this language, we see that $E\in \Exp(\Lambda)$ if and only if $E^*\in\Exp(P)$.\\

\noindent We remark that the notion of homogeneity defined above is similar to that put forth for homogeneous operators on homogeneous (Lie) groups, e.g., Rockland operators \cite{Folland1982}. The difference is mostly a matter of perspective: A homogeneous group $G$ is equipped with a fixed dilation structure, i.e., it comes with a one-parameter group $\{\delta_t\}$, and homogeneity of operators is defined with respect to this fixed dilation structure. By contrast, we fix no dilation structure on $\mathbb{V}$ and formulate homogeneity in terms of an operator $\Lambda$ and the existence of a one-parameter group $\{\delta_t^E\}$ that plays well with $\Lambda$ in the sense defined above. As seen in the study of convolution powers on the square lattice (see \cite{Randles2015a}), it useful to have this freedom.

\begin{definition}\label{def:HomogeneousOperators}
Let $\Lambda$ be constant-coefficient partial differential operator on $\mathbb{V}$ with symbol $P$. We say that $\Lambda$ is a positive-homogeneous operator if $P$ is a positive-definite polynomial and $\Exp(\Lambda)$ contains a diagonalizable endomorphism.
\end{definition}

\noindent As discussed above, for a positive-homogeneous operator $\Lambda$, $\Exp(\Lambda)$ need not be a singleton. However, Lemma 2.10 of \cite{Randles2017} guarantees that, for any $E_1,E_2\in\Exp(\Lambda)$, 
\begin{equation*}
\tr E_1=\tr E_2.
\end{equation*}
Thus, to each positive-homogeneous operator $\Lambda$ we define the \textit{homogeneous order} of $\Lambda$ to be the number
\begin{equation*}
\mu_{\Lambda}=\tr E
\end{equation*}
for any $E\in\Exp(\Lambda)$. We note that the term ``homogeneous order'' does not coincide with the usual ``order" for partial differential operators. For instance, the Laplacian $-\Delta$ on $\mathbb{R}^d$ is a second-order operator, however, because $2^{-1}I\in \Exp(-\Delta)$, its homogeneous order is $\mu_{(-\Delta)}=\tr (2^{-1}I)=d/2$.\\

\noindent The proposition below shows, in particular, that every positive-homogeneous operator on $\mathbb{V}$ is semi-elliptic \cite{Browder1957, Hormander1983}  in some coordinate system. For a proof, see Section 2 of \cite{Randles2017}.

\begin{proposition}\label{prop:OperatorRepresentation}
Let $\Lambda$ be a positive-homogeneous operator on $\mathbb{V}$. Then there exist a basis $\mathbf{v}=\{v_1,v_2,\dots,v_d\}$ of $\mathbb{V}$ and $\mathbf{m}=(m_1,m_2,\dots,m_d)\in\mathbb{N}_+^d$ for which
\begin{equation}\label{eq:OperatorRepresentation1}
\Lambda=\sum_{|\beta:\mathbf{m}|=2}a_{\beta}D_{\mathbf{v}}^\beta. 
\end{equation}
where $\{a_{\beta}\}\subseteq\mathbb{C}$. The isomorphism $E_{\mathbf{v}}^{2\mathbf{m}}\in\Gl(\mathbb{V})$, defined by \eqref{eq:DefofE}, is a member of $\Exp(\Lambda)$ and therefore
\begin{equation*}
\mu_{\Lambda}=|\mathbf{1}:2\mathbf{m}|=\frac{1}{2m_1}+\frac{1}{2m_2}+\cdots+\frac{1}{2m_d}
\end{equation*}
where $\mathbf{1}:=(1,1,\dots,1)\in \mathbb{N}^d$. Furthermore, if $\mathbf{v}^*$ denotes the dual basis on $\mathbb{V}^*$ for the basis $\mathbf{v}$, 
\begin{equation*}
P(\xi)=\sum_{|\beta:\mathbf{m}|=2}a_\beta\xi^{\beta}
\end{equation*}
where $\xi^\beta=\xi_{\mathbf{v}^*}^\beta$ as in \eqref{eq:Monomial} and the isomorphism $E_{\mathbf{v}*}^{2\mathbf{m}}$ is a member of $\Exp(P)$.
\end{proposition}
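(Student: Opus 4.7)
The plan is to extract the basis $\mathbf{v}$ from a diagonalizable $E \in \Exp(\Lambda)$ (guaranteed by Definition~\ref{def:HomogeneousOperators}) and then to use positive-definiteness of the symbol $P$ to pin down the eigenvalues of $E$. It is cleanest to work on the symbol side, using the equivalence~\eqref{eq:homofsymbol}, and only at the end transfer back to $\Lambda$ via the standard correspondence between constant-coefficient operators and their symbols.

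First, I would choose a basis $\mathbf{v} = \{v_1, \ldots, v_d\}$ of $\mathbb{V}$ consisting of eigenvectors of $E$, with eigenvalues $\lambda_1, \ldots, \lambda_d$, and let $\mathbf{v}^* = \{v_1^*, \ldots, v_d^*\}$ be the dual basis. A direct computation using $v_j^*(v_k) = \delta_{jk}$ shows $E^* v_k^* = \lambda_k v_k^*$, so $t^{E^*}$ acts diagonally by $v_k^* \mapsto t^{\lambda_k} v_k^*$. Expanding the symbol in the $\mathbf{v}^*$ coordinates as $P(\xi) = \sum_\beta c_\beta \xi^\beta$ and comparing coefficients in the identity~\eqref{eq:homofsymbol}, I get $t \cdot c_\beta = c_\beta \cdot t^{\sum_k \lambda_k \beta_k}$ for every $\beta$, so $c_\beta \neq 0$ forces $\sum_k \lambda_k \beta_k = 1$; in particular the $\lambda_k$ that matter are real.

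Next, I would use positive-definiteness of $P$ to identify the $\lambda_k$. Restricting $P$ to the line $\xi = \eta v_k^*$ kills every monomial except those with $\beta_j = 0$ for $j \neq k$, and such a $\beta$ contributes only if $\beta_k = 1/\lambda_k$ is a positive integer. Positive-definiteness forbids the restriction from vanishing identically, so at least one such $\beta$ exists, giving $\lambda_k > 0$ and $n_k := 1/\lambda_k \in \mathbb{N}_+$. The restricted polynomial is then (up to a nonzero coefficient) $\eta^{n_k}$, and positive-definiteness further requires its real part to be strictly positive for every $\eta \neq 0$; this forces $n_k$ to be even, so I may write $n_k = 2m_k$ with $m_k \in \mathbb{N}_+$. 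The selection rule $\sum_k \lambda_k \beta_k = 1$ then reads $|\beta:\mathbf{m}| = 2$, yielding the claimed expression for $P$.

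Finally I would transfer back. The correspondence between symbols and constant-coefficient operators gives $\Lambda = \sum_{|\beta:\mathbf{m}|=2} a_\beta D_{\mathbf{v}}^\beta$, and comparing the action of $E_{\mathbf{v}}^{2\mathbf{m}}$ from~\eqref{eq:DefofE} with that of $E$ on $\mathbf{v}$ shows $E_{\mathbf{v}}^{2\mathbf{m}} = E$, so $E_{\mathbf{v}}^{2\mathbf{m}} \in \Exp(\Lambda)$ and, dually, $E_{\mathbf{v}^*}^{2\mathbf{m}} = E^* \in \Exp(P)$; the identity $\mu_\Lambda = \sum_k 1/(2m_k) = |\mathbf{1}:2\mathbf{m}|$ is then simply the trace of $E$. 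The main obstacle is the positive-definiteness argument in the middle step, which must simultaneously rule out irrational or non-integer-reciprocal $1/\lambda_k$ (in which case no pure-$k$ monomial would exist and $P$ would vanish along $v_k^*$) and rule out odd $n_k$ (the one-variable polynomial $c\eta^{n_k}$ cannot have strictly positive real part on both sides of zero); the remaining bookkeeping is routine.
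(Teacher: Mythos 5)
Your proof is correct. The paper itself does not include a proof of this proposition (it cites Section~2 of the companion paper), but your argument --- diagonalizing $E$, reading off the degree constraint $\sum_k\lambda_k\beta_k=1$ from the homogeneity identity~\eqref{eq:homofsymbol}, and then restricting $R$ to each coordinate axis to force $\lambda_k=1/(2m_k)$ with $m_k\in\mathbb{N}_+$ --- is exactly the standard route for this kind of statement, and all the steps, including the use of positive-definiteness to rule out $1/\lambda_k\notin\mathbb{N}_+$ and odd $1/\lambda_k$, check out.
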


\noindent We remark that, if a given positive-homogeneous operator $\Lambda$ is symmetric in the sense that $\langle \Lambda f,g\rangle=\langle f,\Lambda g\rangle$ for all $f,g\in C_0^{\infty}(\mathbb{V})$, then its symbol $P$ is necessarily real-valued, i.e., $R=\Re P=P$, and the coefficients $\{a_{\beta}\}$ of Proposition \ref{prop:OperatorRepresentation} are real numbers.\\

\section{Sobolev spaces, positive-homogeneous operators and their sesquilinear forms}

\noindent In the first part of this section, we define a family of Sobolev spaces on $\mathbb{V}$. These spaces, which include those of the classical elliptic theory, were also discussed in the context of $\mathbb{R}^d$ in \cite{Kannai1969} using coordinates. Then, given a symmetric positive-homogeneous operator $\Lambda$ on $\mathbb{V}$ with symbol $R$, we study the symmetric sesquilinear form $Q_{\Lambda}$ it defines. We then realize $\Lambda$ as a self-adjoint operator on $L^2$ whose domain and form domain are characterized by the previously defined Sobolev spaces; everything here relies on the semi-elliptic representation of positive-homogeneous operators given in Proposition \ref{prop:OperatorRepresentation}.\\

\noindent Let $1\leq p< \infty$, $\mathbf{m}\in \mathbb{N}_+^d$ and let $\mathbf{v}$ be a basis for $\mathbb{V}$. For a non-empty open set $\Omega\subseteq \mathbb{V}$, define
\begin{equation*}
W^{\mathbf{m},p}_{\mathbf{v}}(\Omega)=\left\{f\in L^p(\Omega):D_{\mathbf{v}}^\alpha f\in L^p(\Omega)\hspace{.1cm}\forall\hspace{.1cm}\alpha\mbox{ with }|\alpha:\mathbf{m}|\leq 1\right\}.
\end{equation*}
For any $f\in W^{\mathbf{m},p}_{\mathbf{v}}(\Omega)$ let
\begin{equation*}
\|f\|_{W_{\mathbf{v}}^{\mathbf{m},p}(\Omega)}=\left[\sum_{|\alpha:\mathbf{m}|\leq 1}\int_\Omega|D_{\mathbf{v}}^\alpha f|^pdx\right]^{1/p}.
\end{equation*}
Clearly, $\|\cdot\|_{W_{\mathbf{v}}^{\mathbf{m},p}(\Omega)}$ is a norm on $W^{\mathbf{m},p}_{\mathbf{v}}(\Omega)$ and the usual arguments show that $W^{\mathbf{m},p}_{\mathbf{v}}(\Omega)$ is a Banach space in this norm.  Naturally, we will call these spaces \textit{Sobolev spaces}; in the context of $\mathbb{R}^d$, these spaces were previously studied in \cite{Demidenko1993} and \cite{Kannai1969}.   Notice that when $\mathbb{V}=\mathbb{R}^d$, $\mathbf{v}=\mathbf{e}$ and $\mathbf{m}=(m,m,\dots,m)$, our definition coincides with that of $W^{m,p}(\Omega)$, the standard Sobolev spaces of $\mathbb{R}^d$ where, in this case, the basis is immaterial. Let us also denote by $W_{\mathbf{v},0}^{\mathbf{m},p}(\Omega)$ the closure of $C_0^{\infty}(\Omega)$ in the $\|\cdot\|_{W_{\mathbf{v}}^{\mathbf{m},p}}(\Omega)$ norm.\\

\noindent Temporarily, we restrict our attention to the case where $\Omega=\mathbb{V}$ and $p=2$. As one can check by the use of smooth cut-off functions and mollification, $C_0^{\infty}(\mathbb{V})$ is dense in $W_{\mathbf{v}}^{\mathbf{m},p}(\mathbb{V})$. The following result follows by the standard method, c.f., \cite{Lieb2001}; its proof is omitted.

\begin{lemma}\label{charsobolevbyfourierlem}
Let $\mathbf{m}\in\mathbb{N}^d$, $\mathbf{v}$ be a basis of $\mathbb{V}$ and $\mathbf{v}^*$ be the corresponding dual basis. Then
\begin{equation}\label{charsobolevlemeq}
W_{\mathbf{v}}^{\mathbf{m},2}(\mathbb{V})=\left\{f\in L^2(\mathbb{V}): \xi^{\alpha}\hat{f}(\xi)\in L^2(\mathbb{V}^*)\hspace{.1cm}\forall\hspace{.1cm}\alpha\mbox{ with }|\alpha:\mathbf{m}|\leq 1\right\}
\end{equation}
and
\begin{equation*}
\|f\|^2_{W_{\mathbf{v}}^{\mathbf{m},2}(\mathbb{V})}=\sum_{|\alpha:\mathbf{m}|\leq 1}\|\xi^{\alpha}\hat{f}(\xi)\|_{2^*}^2
\end{equation*}
where $\xi^{\alpha}=\xi_{\mathbf{v}^*}^\alpha$ as in \eqref{eq:Monomial}.
\end{lemma}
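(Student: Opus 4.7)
The plan is to reduce everything to Plancherel's theorem together with the intertwining $\widehat{D_{\mathbf{v}}^\alpha f}(\xi)=\xi^\alpha\hat f(\xi)$ valid for Schwartz functions. To prepare, I will first observe that for $f\in\mathcal{S}(\mathbb{V})$ and $w\in\mathbb{V}$, an integration by parts in the definition of $\mathcal{F}$ gives $\widehat{D_w f}(\xi)=\xi(w)\,\hat f(\xi)$; iterating in the basis $\mathbf{v}$ and writing the result against the dual basis $\mathbf{v}^*$ yields $\widehat{D_{\mathbf{v}}^\alpha f}(\xi)=\xi_{\mathbf{v}^*}^\alpha\hat f(\xi)$. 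Unitarity of $\mathcal{F}$ then gives $\|D_{\mathbf{v}}^\alpha f\|_2=\|\xi^\alpha\hat f\|_{2^*}$ for every multi-index $\alpha$, and summing over $|\alpha:\mathbf{m}|\leq 1$ establishes the norm identity on $\mathcal{S}(\mathbb{V})$.

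For the forward inclusion $\subseteq$ in \eqref{charsobolevlemeq}, I will use the density of $C_0^\infty(\mathbb{V})$ in $W_{\mathbf{v}}^{\mathbf{m},2}(\mathbb{V})$ asserted just before the lemma. Given $f$ in the Sobolev space, choose $f_n\in C_0^\infty(\mathbb{V})$ with $f_n\to f$ in $\|\cdot\|_{W_{\mathbf{v}}^{\mathbf{m},2}(\mathbb{V})}$. Then each sequence $D_{\mathbf{v}}^\alpha f_n$ is Cauchy in $L^2(\mathbb{V})$, so by the Schwartz identity above $\xi^\alpha\hat f_n$ is Cauchy in $L^2(\mathbb{V}^*)$ with some limit $g_\alpha$. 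Since $\hat f_n\to\hat f$ in $L^2(\mathbb{V}^*)$, a subsequence converges almost everywhere, which forces $g_\alpha=\xi^\alpha\hat f$ a.e.; passing the Plancherel identity to the limit delivers the norm formula for $f$.

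For the reverse inclusion $\supseteq$, suppose $\xi^\alpha\hat f\in L^2(\mathbb{V}^*)$ for every $\alpha$ with $|\alpha:\mathbf{m}|\leq 1$, and set $g_\alpha:=\mathcal{F}^{-1}(\xi^\alpha\hat f)\in L^2(\mathbb{V})$. I will show that $g_\alpha$ coincides with the distributional derivative $D_{\mathbf{v}}^\alpha f$, which places $f$ in $W_{\mathbf{v}}^{\mathbf{m},2}(\mathbb{V})$. For $\varphi\in C_0^\infty(\mathbb{V})$, Parseval transfers $\langle g_\alpha,\varphi\rangle$ into $\langle\xi^\alpha\hat f,\hat\varphi\rangle_*$; using $\widehat{D_{\mathbf{v}}^\alpha\varphi}=\xi^\alpha\hat\varphi$ and the real-valuedness of $\xi(w)$, this equals $\langle\hat f,\widehat{D_{\mathbf{v}}^\alpha\varphi}\rangle_*=\langle f,D_{\mathbf{v}}^\alpha\varphi\rangle$, which is $\langle D_{\mathbf{v}}^\alpha f,\varphi\rangle$ by the symmetry of $D_{\mathbf{v}}^\alpha$ (each factor $D_{v_k}=i\partial_{v_k}$ is symmetric on $C_0^\infty(\mathbb{V})$ and the factors commute). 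This identifies $g_\alpha=D_{\mathbf{v}}^\alpha f$ distributionally and completes the argument.

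No step is genuinely difficult; the work is essentially bookkeeping in the coordinate-free Fourier conventions of Section~2. The only point requiring care is tracking the factor of $i$ in $D_w=i\partial_w$ together with the real-valuedness of the pairing $\xi(w)$, so that the multipliers $\xi_{\mathbf{v}^*}^\alpha$ appear without extra signs and align cleanly with the multi-index derivatives $D_{\mathbf{v}}^\alpha$.
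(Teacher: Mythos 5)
Your proof is correct and is precisely the standard Plancherel/density argument the authors invoke when they omit the proof with the citation to Lieb and Loss. The one slightly imprecise phrase is attributing the final step in the reverse inclusion to ``symmetry of $D_{\mathbf{v}}^\alpha$''; the identity $\langle g_\alpha,\varphi\rangle = \langle f,D_{\mathbf{v}}^\alpha\varphi\rangle$ for all $\varphi\in C_0^\infty(\mathbb{V})$ is exactly the defining relation for $g_\alpha$ to be the weak derivative $D_{\mathbf{v}}^\alpha f$ (after unwinding the factors of $i$ and $(-1)^{|\alpha|}$ between the complex $L^2$ pairing and the real distributional pairing), so no separate symmetry lemma is needed.
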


\begin{lemma}\label{charsobolevbyfourierlem2}
Let $\Lambda$ be a symmetric positive-homogeneous operator with symbol $R$ and, in view of Proposition \ref{prop:OperatorRepresentation}, let $\mathbf{m}\in\mathbb{N}_+^d$ and $\mathbf{v}$ be a basis of $\mathbb{V}$ as guaranteed by the proposition. 
Then
\begin{equation*}
W_{\mathbf{v}}^{\mathbf{m},2}(\mathbb{V})=\left\{f\in L^2(\mathbb{V}): \int_{\mathbb{V^*}}R(\xi)|\hat{f}(\xi)|^2d\xi<\infty\right\}
\end{equation*}
and moreover, the norms
\begin{equation*}
\|f\|':=\left(\|f\|_2^2+\int_{\mathbb{V^*}}R(\xi)|\hat{f}(\xi)|^2d\xi\right)^{1/2}
\end{equation*}
and $\|\cdot\|_{W_{\mathbf{v}}^{\mathbf{m},2}(\mathbb{V})}$ are equivalent.
\end{lemma}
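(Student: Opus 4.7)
The plan is to reduce everything to a pointwise comparison of weights on $\mathbb{V}^*$ via Lemma \ref{charsobolevbyfourierlem}. Concretely, I aim to prove that there exist constants $c_1, c_2 > 0$ such that
\begin{equation*}
c_1\bigl(1 + R(\xi)\bigr) \;\leq\; \sum_{|\alpha:\mathbf{m}|\leq 1} |\xi^\alpha|^2 \;\leq\; c_2\bigl(1 + R(\xi)\bigr)
\end{equation*}
for every $\xi \in \mathbb{V}^*$, where the monomials are taken with respect to the dual basis $\mathbf{v}^*$. Once this is in hand, multiplying by $|\hat f(\xi)|^2\,d\xi$, integrating over $\mathbb{V}^*$, and invoking Plancherel together with Lemma \ref{charsobolevbyfourierlem} immediately yields both the claimed set equality and the equivalence of $\|\cdot\|'$ with $\|\cdot\|_{W_{\mathbf{v}}^{\mathbf{m},2}(\mathbb{V})}$.

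For the upper bound on $R$, I would expand $R(\xi) = \sum_{|\beta:\mathbf{m}|=2} a_\beta \xi^\beta$ using Proposition \ref{prop:OperatorRepresentation} (with the $a_\beta$ real because $\Lambda$ is symmetric) and bound each monomial via weighted AM--GM: for $|\beta:\mathbf{m}|=2$, the weights $t_k = \beta_k/(2m_k)$ sum to $1$ and yield
\begin{equation*}
|\xi^\beta| \;=\; \prod_k \bigl(|\xi_k|^{2m_k}\bigr)^{t_k} \;\leq\; \sum_k \tfrac{\beta_k}{2m_k}\,|\xi_k|^{2m_k}.
\end{equation*}
Since $|m_k e_k : \mathbf{m}| = 1$ and $|\xi_k|^{2m_k} = |\xi^{m_k e_k}|^2$, each term on the right already appears in the Sobolev sum, delivering the upper bound with constants depending only on the $|a_\beta|$ and $\mathbf{m}$.

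The lower bound is where positive-definiteness really enters. By Definition \ref{def:HomogeneousOperators}, $R$ is continuous, non-negative, and vanishes only at $\xi = 0$, so the compact ``anisotropic unit sphere'' $S = \{\xi \in \mathbb{V}^* : \sum_k |\xi_k|^{2m_k} = 1\}$ carries a strictly positive minimum $c_0 = \inf_S R$. Writing any nonzero $\xi$ as $\xi = t^F\eta$ with $F = E_{\mathbf{v}^*}^{2\mathbf{m}}$, $t = \sum_k |\xi_k|^{2m_k}$, and $\eta \in S$, the homogeneity relation \eqref{eq:homofsymbol} gives $R(\xi) = t R(\eta) \geq c_0\sum_k|\xi_k|^{2m_k}$. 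A second application of weighted AM--GM---this time with weights $\alpha_k/m_k$ (which satisfy $\alpha_k/m_k \leq |\alpha:\mathbf{m}| \leq 1$) together with a slack weight $1 - |\alpha:\mathbf{m}|$ attached to the constant $1$---yields $|\xi^\alpha|^2 \leq 1 + \sum_k |\xi_k|^{2m_k} \leq 1 + c_0^{-1} R(\xi)$ for each $\alpha$ with $|\alpha:\mathbf{m}| \leq 1$, and summing over the finitely many such $\alpha$ completes the pointwise equivalence. The main obstacle is precisely this lower bound, which hinges on upgrading bare positive-definiteness into the quantitative estimate $R(\xi) \geq c_0\sum_k |\xi_k|^{2m_k}$ via a compactness argument on $S$ combined with the homogeneity $R(t^F\xi) = t R(\xi)$; once this is secured, everything else is routine bookkeeping with elementary AM--GM inequalities and Plancherel.
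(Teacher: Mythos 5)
Your proposal is correct and follows essentially the same route as the paper: reduce to the pointwise weight comparison $\sum_{|\alpha:\mathbf{m}|\leq 1}\xi^{2\alpha}\asymp 1+R(\xi)$ and then pass through Plancherel and Lemma \ref{charsobolevbyfourierlem}. The only difference is that you prove the weight comparison inline (via weighted AM--GM together with a compactness-and-homogeneity argument on the anisotropic unit sphere) rather than citing Lemma \ref{lem:Scaling}, whose proof in the appendix rests on exactly that same compactness/contracting-group device.
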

\begin{proof}
By an appeal to Proposition \ref{prop:OperatorRepresentation} and Lemma \ref{lem:Scaling}, we obtain positive constants $C$ and $C'$ for which
\begin{equation*}
C(1+R(\xi))\leq \sum_{|\alpha:\mathbf{m}|\leq 1}\xi^{2\alpha}\leq C'( 1+R(\xi))
\end{equation*}
for all $\xi\in\mathbb{V}^*$. With this estimate, the result follows directly from Lemma \ref{charsobolevbyfourierlem} using the Fourier transform.
\end{proof}

\noindent Returning to the general situation, let $\Omega\subseteq \mathbb{V}$ be a non-empty open set. For $f\in L^2(\Omega)$, define $f_*\in L^2(\mathbb{V})$ by
\begin{equation}\label{eq:ExtensionDefinition}
f_*(x)=
\begin{cases}
f(x)&\mbox{ if }x\in\Omega\\
0&\mbox{ otherwise.}
\end{cases}
\end{equation}
Of course, $\|f\|_{L^2(\Omega)}=\|f_*\|_{L^2(\mathbb{V})}$. The following lemma shows that $W_{\mathbf{v},0}^{\mathbf{m},2}(\Omega)$ is continuously embedded in $W_{\mathbf{v}}^{\mathbf{m},2}(\mathbb{V})$:

\begin{lemma}\label{lem:SobolevEmbedding}
For any $f\in W_{\mathbf{v},0}^{\mathbf{m},2}(\Omega)$, $f_*\in W_{\mathbf{v}}^{\mathbf{m},2}(\mathbb{V})$ and
\begin{equation*}
\|f\|_{W_{\mathbf{v}}^{\mathbf{m},2}(\Omega)}=\|f_*\|_{W_{\mathbf{v}}^{\mathbf{m},2}(\mathbb{V})}.
\end{equation*}
\end{lemma}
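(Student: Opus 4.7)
The proof proceeds by a standard density and isometry argument, exploiting the definition of $W_{\mathbf{v},0}^{\mathbf{m},2}(\Omega)$ as the $\|\cdot\|_{W_{\mathbf{v}}^{\mathbf{m},2}(\Omega)}$-closure of $C_0^\infty(\Omega)$.

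The plan is as follows. First, I would verify the result on the dense subspace $C_0^\infty(\Omega)$. If $\varphi \in C_0^\infty(\Omega)$, then $\varphi_*$, as defined in \eqref{eq:ExtensionDefinition}, lies in $C_0^\infty(\mathbb{V})$, since $\varphi$ already vanishes in a neighborhood of $\partial\Omega$. Moreover, for every multi-index $\alpha$ with $|\alpha:\mathbf{m}|\leq 1$, the pointwise (and distributional) derivative satisfies $D_{\mathbf{v}}^\alpha(\varphi_*) = (D_{\mathbf{v}}^\alpha \varphi)_*$ on $\mathbb{V}$. Since the zero-extension from $\Omega$ to $\mathbb{V}$ is an $L^2$-isometry, it follows immediately that
\begin{equation*}
\|\varphi_*\|_{W_{\mathbf{v}}^{\mathbf{m},2}(\mathbb{V})}^2 = \sum_{|\alpha:\mathbf{m}|\leq 1}\|(D_{\mathbf{v}}^\alpha\varphi)_*\|_{L^2(\mathbb{V})}^2 = \sum_{|\alpha:\mathbf{m}|\leq 1}\|D_{\mathbf{v}}^\alpha\varphi\|_{L^2(\Omega)}^2 = \|\varphi\|_{W_{\mathbf{v}}^{\mathbf{m},2}(\Omega)}^2.
\end{equation*}
Thus the extension map $T:\varphi\mapsto\varphi_*$ is an isometry from $(C_0^\infty(\Omega),\|\cdot\|_{W_{\mathbf{v}}^{\mathbf{m},2}(\Omega)})$ into $W_{\mathbf{v}}^{\mathbf{m},2}(\mathbb{V})$.

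Next, given $f \in W_{\mathbf{v},0}^{\mathbf{m},2}(\Omega)$, I would choose a sequence $\{\varphi_n\}\subseteq C_0^\infty(\Omega)$ with $\varphi_n\to f$ in $W_{\mathbf{v}}^{\mathbf{m},2}(\Omega)$. By the isometry above, $\{T\varphi_n\} = \{(\varphi_n)_*\}$ is Cauchy in the Banach space $W_{\mathbf{v}}^{\mathbf{m},2}(\mathbb{V})$ and hence converges to some $g\in W_{\mathbf{v}}^{\mathbf{m},2}(\mathbb{V})$. In particular, $(\varphi_n)_*\to g$ in $L^2(\mathbb{V})$. On the other hand, $\varphi_n\to f$ in $L^2(\Omega)$, and since the zero-extension is an $L^2$-isometry, $(\varphi_n)_*\to f_*$ in $L^2(\mathbb{V})$. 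By uniqueness of limits, $g=f_*$ a.e., so $f_*\in W_{\mathbf{v}}^{\mathbf{m},2}(\mathbb{V})$. Passing to the limit in the isometry identity established for $\varphi_n$ then yields
\begin{equation*}
\|f_*\|_{W_{\mathbf{v}}^{\mathbf{m},2}(\mathbb{V})} = \lim_{n\to\infty}\|(\varphi_n)_*\|_{W_{\mathbf{v}}^{\mathbf{m},2}(\mathbb{V})} = \lim_{n\to\infty}\|\varphi_n\|_{W_{\mathbf{v}}^{\mathbf{m},2}(\Omega)} = \|f\|_{W_{\mathbf{v}}^{\mathbf{m},2}(\Omega)},
\end{equation*}
which is the claimed identity.

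There is no serious obstacle here; the only point requiring any care is the observation that for a genuinely smooth compactly supported $\varphi$ in $\Omega$, the naive pointwise derivatives of the zero-extension agree with the distributional derivatives on all of $\mathbb{V}$, which hinges on $\supp\varphi$ being a compact subset of the open set $\Omega$ so that no boundary contributions arise. Once that baseline case is recorded, the remainder is simply extension by continuity of an isometry on a dense subspace.
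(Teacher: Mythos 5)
Your proof is correct and takes essentially the same route as the paper: both arguments rest on the density of $C_0^\infty(\Omega)$ in $W_{\mathbf{v},0}^{\mathbf{m},2}(\Omega)$, the fact that for $\varphi\in C_0^\infty(\Omega)$ the zero extension commutes with $D_{\mathbf{v}}^\alpha$ (no boundary terms in the integration by parts), and passage to the limit. The paper organizes the limit inside the test-function pairing to directly identify $D_{\mathbf{v}}^\alpha f_*=(D_{\mathbf{v}}^\alpha f)_*$, whereas you phrase it as extension-by-continuity of an isometry and then identify the abstract limit with $f_*$ via $L^2$ uniqueness; these are two presentations of the same argument.
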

\begin{proof}
Let $f\in W_{\mathbf{v},0}^{\mathbf{m},2}(\Omega)$ and let $\{f_n\}\subseteq C_0^{\infty}(\Omega)$ for which $\|f_n-f\|_{W_{\mathbf{v}}^{\mathbf{m},2}(\Omega)}\rightarrow 0$ as $n\rightarrow \infty$. Then for any $\phi\in C_0^{\infty}(\mathbb{V})$ and multi-index $\alpha$ for which $|\alpha:\mathbf{m}|\leq 1$,
\begin{multline*}
\int_{\mathbb{V}}f_*(D_{\mathbf{v}}^{\alpha}\phi) dx=\int_{\Omega}f (D_{\mathbf{v}}^{\alpha}\phi) dx=\lim_{n\rightarrow \infty}\int_{\Omega}f_n (D_{\mathbf{v}}^{\alpha}\phi) dx\\
=\lim_{n\rightarrow \infty}(-1)^{|\alpha|}\int_{\Omega}(D_{\mathbf{v}}^{\alpha} f_n)\phi dx=(-1)^{|\alpha|}\int_{\Omega} (D_{\mathbf{v}}^\alpha f )\phi dx\\
=(-1)^{|\alpha|}\int_{\mathbb{V}}(D_{\mathbf{v}}^{\alpha}f)_*\phi dx
\end{multline*}
where we used the fact that each $f_n$ has compact support in $\Omega$ and thus partial integration produces no boundary terms. Thus for each such $\alpha$, $D_{\mathbf{v}}^{\alpha}f_*=(D_{\mathbf{v}}^{\alpha}f)_*\in L^2(\mathbb{V})$ and $\|D_{\mathbf{v}}^{\alpha}f\|_{L^2(\Omega)}=\|D_{\mathbf{v}}^{\alpha}f_*\|_{L^2(\mathbb{V})}$ from which the result follows.
\end{proof}

\noindent We now turn to positive-homogeneous operators, viewed in the $L^2$ setting and their associated sesquilinear forms. Let $\Omega\subseteq \mathbb{V}$ be a non-empty open set and let $\Lambda$ be a positive-homogeneous operator on $\mathbb{V}$ with symbol $R$ and let $\mathbf{m}\in\mathbb{N}_+^d$ and $\mathbf{v}$ be the basis of $\mathbb{V}$ guaranteed by Proposition \ref{prop:OperatorRepresentation}. Define
\begin{equation*}
\Dom(Q_{\Lambda_\Omega})=W_{0,\mathbf{v}}^{\mathbf{m},2}(\Omega)
\end{equation*}
and for each $f,g\in \Dom(Q_{\Lambda_\Omega})$, put
\begin{equation*}
Q_{\Lambda_\Omega}(f,g)=\int_{\mathbb{V}^*}R (\xi)\widehat{f_*}(\xi)\overline{\widehat{g_*}(\xi)}d\xi.
\end{equation*}

\begin{proposition}\label{prop:DirichletOperator}
Then the restriction $\Lambda\vert_{C_0^{\infty}(\Omega)}$ extends to a non-negative self-adjoint operator on $L^2(\Omega)$, denoted by $\Lambda_{\Omega}$. Its associated symmetric sesquilinear form is $Q_{\Lambda_\Omega}$ and has $\Dom(Q_{\Lambda_\Omega})=W_{\mathbf{v},0}^{\mathbf{m},2}(\Omega)=\Dom(\Lambda_{\Omega}^{1/2})$. Moreover, $C_0^{\infty}(\Omega)$ is a core for $Q_{\Lambda_\Omega}$.
\end{proposition}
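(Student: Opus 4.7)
The plan is to invoke the standard correspondence between closed, densely-defined, non-negative symmetric sesquilinear forms on $L^2(\Omega)$ and non-negative self-adjoint operators (Kato's representation theorem). Concretely, I will first verify that $Q_{\Lambda_\Omega}$ is a well-defined, densely-defined, closed, non-negative, symmetric sesquilinear form with domain $W_{\mathbf{v},0}^{\mathbf{m},2}(\Omega)$, then use the representation theorem to produce a non-negative self-adjoint operator $\Lambda_\Omega$ with $\Dom(\Lambda_\Omega^{1/2}) = \Dom(Q_{\Lambda_\Omega})$, and finally identify $\Lambda_\Omega$ with the closure of $\Lambda\vert_{C_0^\infty(\Omega)}$ via Plancherel.

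For the form properties: symmetry follows from the fact that since $\Lambda$ is symmetric the symbol $R$ is real-valued (the remark after Proposition \ref{prop:OperatorRepresentation}), so $Q_{\Lambda_\Omega}(f,g) = \overline{Q_{\Lambda_\Omega}(g,f)}$ by complex conjugation inside the integral. Non-negativity is immediate because $R \geq 0$ (positive-definiteness). Density of $\Dom(Q_{\Lambda_\Omega}) = W_{\mathbf{v},0}^{\mathbf{m},2}(\Omega)$ in $L^2(\Omega)$ is clear since it contains $C_0^\infty(\Omega)$. For closedness, I need to show that the form norm
\begin{equation*}
\|f\|_Q^2 := \|f\|_{L^2(\Omega)}^2 + Q_{\Lambda_\Omega}(f,f) = \|f_*\|_2^2 + \int_{\mathbb{V}^*} R(\xi)|\widehat{f_*}(\xi)|^2\, d\xi
\end{equation*}
makes $\Dom(Q_{\Lambda_\Omega})$ complete. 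By Lemma \ref{lem:SobolevEmbedding} the extension $f \mapsto f_*$ is an isometric embedding of $W_{\mathbf{v},0}^{\mathbf{m},2}(\Omega)$ into $W_{\mathbf{v}}^{\mathbf{m},2}(\mathbb{V})$, and by Lemma \ref{charsobolevbyfourierlem2} the norm $\|\cdot\|'$ on the latter is equivalent to $\|\cdot\|_{W_{\mathbf{v}}^{\mathbf{m},2}(\mathbb{V})}$. Since $\|f\|_Q = \|f_*\|'$, the form norm is equivalent on $\Dom(Q_{\Lambda_\Omega})$ to the Sobolev norm, in which $W_{\mathbf{v},0}^{\mathbf{m},2}(\Omega)$ is closed in $W_{\mathbf{v}}^{\mathbf{m},2}(\Omega)$ (by its very definition as a closure), hence complete. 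This gives closedness.

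Having produced $\Lambda_\Omega$ from Kato's theorem with $\Dom(\Lambda_\Omega^{1/2}) = W_{\mathbf{v},0}^{\mathbf{m},2}(\Omega)$, I must identify it as an extension of $\Lambda\vert_{C_0^\infty(\Omega)}$. For any $f \in C_0^\infty(\Omega)$ and $g \in \Dom(Q_{\Lambda_\Omega})$, I will use Plancherel and the fact that $\Lambda$ has symbol $R$ (so $\widehat{\Lambda f}(\xi) = R(\xi)\widehat{f}(\xi)$, and $\Lambda f \in C_0^\infty(\Omega) \subseteq L^2(\Omega)$ via the representation \eqref{eq:OperatorRepresentation1}) to compute
\begin{equation*}
Q_{\Lambda_\Omega}(f,g) = \int_{\mathbb{V}^*} R(\xi)\widehat{f_*}(\xi)\overline{\widehat{g_*}(\xi)}\, d\xi = \langle (\Lambda f)_*, g_*\rangle = \langle \Lambda f, g\rangle_{L^2(\Omega)}.
\end{equation*}
By the characterization of $\Lambda_\Omega$ via its form (an element $f \in \Dom(Q_{\Lambda_\Omega})$ belongs to $\Dom(\Lambda_\Omega)$ iff $g \mapsto Q_{\Lambda_\Omega}(f,g)$ extends boundedly in $g \in L^2(\Omega)$, and then $\Lambda_\Omega f$ represents that functional), this shows $C_0^\infty(\Omega) \subseteq \Dom(\Lambda_\Omega)$ with $\Lambda_\Omega f = \Lambda f$ on $C_0^\infty(\Omega)$. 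Finally, the fact that $C_0^\infty(\Omega)$ is a core for $Q_{\Lambda_\Omega}$ is built into the definition: $\Dom(Q_{\Lambda_\Omega}) = W_{\mathbf{v},0}^{\mathbf{m},2}(\Omega)$ is, by definition, the closure of $C_0^\infty(\Omega)$ in the Sobolev norm, which we just showed is equivalent to the form norm.

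The main technical step is verifying closedness, which is where the equivalence of the two Sobolev-type norms (the one defined by $R$ and the one defined by all monomial derivatives of degree $|\alpha:\mathbf{m}| \le 1$) from Lemma \ref{charsobolevbyfourierlem2} does the real work; the rest is essentially bookkeeping combining Kato's theorem with Plancherel.
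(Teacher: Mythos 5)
Your proposal is correct and follows essentially the same route as the paper's proof: both rely on the norm equivalence from Lemma \ref{charsobolevbyfourierlem2}, the isometric embedding of Lemma \ref{lem:SobolevEmbedding} to get closedness of the form, the representation theorem for non-negative closed symmetric forms to produce $\Lambda_\Omega$, and a Plancherel computation to identify $\Lambda_\Omega$ as an extension of $\Lambda\vert_{C_0^\infty(\Omega)}$. The only small difference is that you carry out the final identification with $g$ ranging over all of $\Dom(Q_{\Lambda_\Omega})$ and cite the form characterization of the operator domain explicitly, whereas the paper checks the equality only for $g\in C_0^\infty(\Omega)$ and invokes the extension tacitly; your version is marginally more complete on that point.
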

\begin{remark}
The self-adjoint operator $\Lambda_{\Omega}$ is the Dirichlet operator on $\Omega$, i.e., the operator associated with Dirichlet boundary conditions.
\end{remark}

\begin{proof}[Proof of Proposition \ref{prop:DirichletOperator}]
In view of Lemma \ref{charsobolevbyfourierlem2}, there are constants $C,C'>0$ for which
\begin{equation*}
C\|f\|_{W_{\mathbf{v}}^{\mathbf{m},2}(\mathbb{V})}\leq\left ( \|f\|_{L^2(\mathbb{V})}^2+\int_{\mathbb{V}^*}R(\xi)|\hat{f}(\xi)|^2d\xi\right)^{1/2}\leq C'\|f\|_{W_{\mathbf{v}}^{\mathbf{m},2}(\mathbb{V})}
\end{equation*}
for all $f\in W_{\mathbf{v}}^{\mathbf{m},2}(\mathbb{V})$. Thus by Lemma \ref{lem:SobolevEmbedding},
\begin{equation*}
C\|f\|_{W_{\mathbf{v}}^{\mathbf{m},2}(\Omega)}\leq \left(\|f\|_{L^2(\Omega)}^2+Q_{\Lambda_\Omega}(f)\right)^{1/2}\leq C'\|f\|_{W_{\mathbf{v}}^{\mathbf{m},2}(\Omega)}
\end{equation*}
for all $f\in W_{\mathbf{v},0}^{\mathbf{m},2}(\Omega)$. It follows that
\begin{equation*}
 \|f\|'_{\Omega}:=\left (\|f\|_{L^2(\Omega)}^2+Q_{\Lambda_\Omega}(f)\right)^{1/2}
\end{equation*}
defines a norm on $W_{\mathbf{v},0}^{\mathbf{m},2}(\Omega)$, equivalent to the norm $\|\cdot\|_{ W_{\mathbf{v}}^{\mathbf{m},2}(\Omega)}$. From this we can also conclude that $Q_{\Lambda_\Omega}$ is a \emph{bona fide} sesquilinear form with domain $\Dom(Q_{\Lambda_\Omega})=W_{\mathbf{v},0}^{\mathbf{m},2}(\Omega)$. 

In view of the positive-definiteness of $R$, it is easy to see that $Q_{\Lambda_\Omega}$ is symmetric, positive-definite (in the sense of forms) and densely defined. We claim that $Q_{\Lambda_\Omega}$ is closed. Indeed, let $\{f_n\}\subseteq  W_{\mathbf{v},0}^{\mathbf{m},2}(\Omega)$ be a $Q_{\Lambda_\Omega}$-Cauchy sequence and such that $f_n\rightarrow f$ in $L^2(\Omega)$ for some $f\in L^2(\Omega)$. Because the norms $\|\cdot\|'_{\Omega}$ and $\|\cdot\|_{ W_{\mathbf{v}}^{\mathbf{m},2}(\Omega)}$ are equivalent, we know that $\{f_n\}$ is also a Cauchy sequence in  $W_{\mathbf{v},0}^{\mathbf{m},2}(\Omega)$ and so it converges. Moreover, as the topology on $ W_{\mathbf{v},0}^{\mathbf{m},2}(\Omega)$ is finer than the topology induced by the $L^2(\Omega)$ norm, we can conclude that $f\in  W_{\mathbf{v},0}^{\mathbf{m},2}(\Omega)$ and $f_n\rightarrow f$ in $ W_{\mathbf{v},0}^{\mathbf{m},2}(\Omega)$. By again appealing to the equivalence 
of norms, it follows that $Q_{\Lambda_\Omega}$ is closed and, upon noting that $C_0^{\infty}(\Omega)$ is dense in $W_{\mathbf{v},0}^{\mathbf{m},2}(\Omega)$, it is evident that $C_0^{\infty}(\Omega)$ is a core for $Q_{\Lambda_\Omega}$.

In view of the theory of symmetric sesquilinear forms, $Q_{\Lambda_\Omega}$ has a unique associated non-negative self-adjoint operator $\Lambda_{\Omega}$ with $\Dom(\Lambda_{\Omega}^{1/2})=\Dom(Q_{\Lambda_\Omega})$.  Also, because
\begin{equation*}
\langle \Lambda f,g\rangle_{\Omega}=\langle \Lambda f_*,g_*\rangle=\int_{\mathbb{V}^*}R(\xi)\hat{f}_*(\xi)\overline{\hat{g}_*(\xi)}d\xi=Q_{\Lambda_\Omega}(f,g)=\langle f,\Lambda g\rangle_{\Omega}
\end{equation*}
for all $f,g\in C_0^{\infty}(\Omega)$, $\Lambda_{\Omega}$ must be a self-adjoint extension of $\Lambda\vert_{C_0^{\infty}(\Omega)}$.
\end{proof}

\begin{remark}
It should be pointed out that $\Lambda\vert_{C_0^{\infty}(\Omega)}$ is not generally essentially self-adjoint; for instance one can consider the Dirichlet and Neumann operators when $\Omega$ is, say, a bounded open non-empty subset of $\mathbb{V}$. 
\end{remark}

\noindent Our final proposition of this section addresses the essential self-adjointness of $\Lambda$ in the case that $\Omega=\mathbb{V}$. The proof is included for the convenience of the reader.
\begin{proposition}\label{prop:esa}
The operator $\Lambda\vert_{C_0^\infty(\mathbb{V})}$ is essentially self-adjoint and its closure $\Lambda=\Lambda_{\mathbb{V}}$ has
\begin{equation*}
\Dom(\Lambda)=W_{\mathbf{v}}^{2\mathbf{m},2}(\mathbb{V}).
\end{equation*}
\end{proposition}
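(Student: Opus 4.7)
My plan is to diagonalize $\Lambda$ via the Fourier transform. Because $\Lambda$ is symmetric and has constant coefficients, its symbol $R$ is a real-valued polynomial (see the remark following Proposition \ref{prop:OperatorRepresentation}), so multiplication by $R$ on $L^2(\mathbb{V}^*)$ with maximal domain $D_R := \{g \in L^2(\mathbb{V}^*) : R g \in L^2(\mathbb{V}^*)\}$ is a non-negative self-adjoint operator $M_R$ by the standard spectral theorem for multiplication operators. Setting $\tilde{\Lambda} := \mathcal{F}^{-1} M_R \mathcal{F}$ with domain $\mathcal{F}^{-1}(D_R)$ yields a non-negative self-adjoint operator on $L^2(\mathbb{V})$, and for any $f \in C_0^{\infty}(\mathbb{V}) \subseteq \mathcal{S}(\mathbb{V})$ one has $\widehat{\Lambda f}(\xi) = R(\xi)\hat f(\xi)$, so $\tilde\Lambda$ is a self-adjoint extension of $\Lambda\vert_{C_0^{\infty}(\mathbb{V})}$. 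It therefore suffices to show that (i) $\mathcal{F}^{-1}(D_R) = W_{\mathbf{v}}^{2\mathbf{m},2}(\mathbb{V})$, and (ii) $C_0^{\infty}(\mathbb{V})$ is a core for $\tilde\Lambda$.

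For (i) I would adapt the scaling argument that proves Lemma \ref{charsobolevbyfourierlem2}, with $2\mathbf{m}$ in place of $\mathbf{m}$. Proposition \ref{prop:OperatorRepresentation} supplies $F := (E_{\mathbf{v}}^{2\mathbf{m}})^* \in \Exp(R)$, so $R(t^F \xi) = t R(\xi)$, while each monomial $\xi^{2\alpha}$ with $|\alpha : 2\mathbf{m}| = 1$ satisfies $(t^F \xi)^{2\alpha} = t^2\, \xi^{2\alpha}$. Since $R$ is continuous and positive-definite, it is bounded below by a positive constant on any transversal of the one-parameter group $\{t^F\}$. Combining these two ingredients via the same dilation trick as in Lemma \ref{charsobolevbyfourierlem2} should yield constants $c, C > 0$ with
\begin{equation*}
c\bigl(1 + R(\xi)^2\bigr) \;\leq\; \sum_{|\alpha : 2\mathbf{m}| \leq 1} \xi^{2\alpha} \;\leq\; C\bigl(1 + R(\xi)^2\bigr)
\end{equation*}
for all $\xi \in \mathbb{V}^*$. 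Feeding this estimate into Lemma \ref{charsobolevbyfourierlem} (applied with $2\mathbf{m}$) identifies $\mathcal{F}^{-1}(D_R)$ with $W_{\mathbf{v}}^{2\mathbf{m},2}(\mathbb{V})$ and shows that the graph norm of $\tilde\Lambda$, namely $f \mapsto \bigl(\|f\|_2^2 + \|R\hat f\|_{2^*}^2\bigr)^{1/2}$, is equivalent to $\|\cdot\|_{W_{\mathbf{v}}^{2\mathbf{m},2}(\mathbb{V})}$.

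For (ii), the general density assertion noted earlier in the section (proved by smooth cutoff and mollification) gives that $C_0^{\infty}(\mathbb{V})$ is dense in $W_{\mathbf{v}}^{2\mathbf{m},2}(\mathbb{V})$ in the Sobolev norm, hence also in the equivalent graph norm of $\tilde\Lambda$. Thus $C_0^{\infty}(\mathbb{V})$ is a core for $\tilde\Lambda$, the closure of $\Lambda\vert_{C_0^{\infty}(\mathbb{V})}$ coincides with $\tilde\Lambda$, and this gives both essential self-adjointness and the claimed domain identification $\Dom(\Lambda) = W_{\mathbf{v}}^{2\mathbf{m},2}(\mathbb{V})$; the equality $\Lambda = \Lambda_{\mathbb{V}}$ is then automatic by uniqueness of self-adjoint extensions of an essentially self-adjoint operator.

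The only genuinely nontrivial step is the two-sided comparison displayed above. The upper bound is a routine polynomial estimate using that $R$ is a sum of monomials $\xi^{\beta}$ with $|\beta:\mathbf{m}|=2$. The lower bound $R(\xi)^2 \gtrsim \xi^{2\alpha}$ for $|\alpha:2\mathbf{m}|=1$ away from the origin is the subtle point: it requires positive-definiteness of $R$ together with the anisotropic scaling lemma invoked in the proof of Lemma \ref{charsobolevbyfourierlem2}. Once this comparison is secured, every other step is standard operator-theoretic bookkeeping.
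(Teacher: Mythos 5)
Your proposal is correct and takes a genuinely different route from the paper. The paper proves essential self-adjointness head-on via von Neumann's criteria (showing $\mathrm{Ran}(\Lambda\vert_{C_0^{\infty}}\pm i)^{\perp}=\{0\}$ by passing to the Fourier side and using that $R$ is real-valued), and then characterizes $\Dom(\Lambda)$ by a two-sided Cauchy-sequence argument: given $f\in\Dom(\Lambda)$, one approximates by $f_n\in C_0^{\infty}$ with $f_n\to f$, $\Lambda f_n\to \Lambda f$ in $L^2$, and the one-sided estimate $|\xi^{\alpha}|\leq C_{\alpha}(R(\xi)+1)$ from Lemma \ref{lem:Scaling} makes $\{D_{\mathbf{v}}^{\alpha}f_n\}$ Cauchy; the converse inclusion uses density of $C_0^{\infty}$ in the Sobolev norm and closedness of $\Lambda$. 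You instead build the candidate self-adjoint extension explicitly as $\mathcal{F}^{-1}M_R\mathcal{F}$, identify its maximal domain $\mathcal{F}^{-1}(D_R)$ with $W_{\mathbf{v}}^{2\mathbf{m},2}(\mathbb{V})$ via a two-sided polynomial comparison, and then show $C_0^{\infty}$ is a core by graph-norm density. Your two-sided comparison is indeed valid: the direction $R(\xi)^2\lesssim \sum_{|\alpha:2\mathbf{m}|\leq 1}\xi^{2\alpha}$ is immediate from Cauchy--Schwarz since $R$ is a real linear combination of the monomials $\xi^{\beta}$ with $|\beta:2\mathbf{m}|=1$, and the reverse direction $\xi^{2\alpha}\lesssim 1+R(\xi)^2$ follows by squaring the estimate of Lemma \ref{lem:Scaling}. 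What your route buys is that the self-adjointness, non-negativity, and domain of $\tilde\Lambda$ are all manifest from the spectral theorem for multiplication operators, collapsing the paper's three separate steps (von Neumann, domain inclusion, reverse inclusion) into a single core argument; what it costs is the need for the slightly stronger \emph{two-sided} polynomial comparison, whereas the paper's forward inclusion uses only the one-sided bound from Lemma \ref{lem:Scaling}.
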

\begin{proof}
We first show the essential self-adjointness of $\Lambda\vert_{C_0^\infty(\mathbb{V})}$. To this end, let $f\in \mbox{Ran}(\Lambda\vert_{C_0^{\infty}(\mathbb{V})}\pm i)^\perp$ and, in view of the unitarity of the Fourier transform, observe that
\begin{equation*}
0=\langle f,(\Lambda\pm i)g\rangle=\langle \hat{f},(R\pm i)\hat{g}\rangle_*=\langle (R\mp i)\hat{f},\hat{g}\rangle_*
\end{equation*}
for all $g\in C_0^{\infty}(\mathbb{V})$. We know that $\mathcal{F}(C_0^{\infty}(\mathbb{V}))$ is dense in $L^2(\mathbb{V}^*)$ and so it follows that $(R(\xi)\pm i)\hat{f}(\xi))=0$ almost everywhere. Using the fact that $R$ is real-valued, we conclude that $f=0$ and so $\mbox{Ran}(\Lambda\vert_{C_0^{\infty}(\mathbb{V})}\pm i)^\perp=\{0\}$. This implies that $\mbox{Ran}(\Lambda\vert_{C_0^{\infty}(\mathbb{V})}\pm i)$ is dense in $L^2(\mathbb{V})$ and thus $\Lambda\vert_{C_0^{\infty}(\mathbb{V})}$ is essentially self-adjoint in view of von Neumann's criteria. We denote this unique self-adjoint extension by $\Lambda$.

We now characterize the domain of $\Lambda$. Let $f\in \Dom(\Lambda)$ take a sequence $\{f_n\}\subseteq C^{\infty}_0(\mathbb{V})$ for which $f_n\to f$ and $\Lambda f_n\to \Lambda f$ in the sense of $L^2(\mathbb{V})$. For any multi-index $\alpha$ for which $|\alpha:2\mathbf{m}|\leq 1$, an appeal to Lemma \ref{lem:Scaling} gives a positive constant $C_{\alpha}$ for which 
\begin{equation*}
|\xi^{\alpha}|\leq C_{\alpha}(R(\xi)+1)
\end{equation*}
for all $\xi\in\mathbb{V}^*$ where $\xi^{\alpha}=\xi_{\mathbf{v}^*}^\alpha$ as in \eqref{eq:Monomial}. Consequently, for each pair of natural numbers $n$ and $m$,
\begin{eqnarray*}
\|D_{\mathbf{v}}^{\alpha}f_n-D_{\mathbf{v}}^{\alpha}f_m\|_2^2&=&\int_{\mathbb{V}}|D_{\mathbf{v}}^{\alpha}(f_n-f_m)(x)|^2\,dx\\
&=&\int_{\mathbb{V}^*}|\xi^{\alpha}(f_n-f_m)\hat{\,\,}(\xi)|^2\,d\xi\\
&\leq &C_{\alpha}^2\int_{\mathbb{V}^*}|(R(\xi)+1)(f_n-f_m)\hat{\,\,}(\xi)|^2\,d\xi\\
&\leq &C_\alpha^2\|(\Lambda+1)(f_n-f_m)\|_2^2
\end{eqnarray*}
where we have used the fact that $\{f_n\}\subseteq C_0^\infty(\mathbb{V})$. It now follows from the way the sequence $\{f_n\}$ was chosen that $\{D_{\mathbf{v}}^\alpha f_n\}$ is a Cauchy sequence in $L^2(\mathbb{V})$ and so it converges to some limit $g_\alpha$. Notice that, for each $\phi\in C_0^\infty(\mathbb{V})$, 
\begin{eqnarray*}
\lefteqn{\int_{\mathbb{V}}g_\alpha(x)\phi(x)\,dx=\lim_{n\to\infty}\int_{\mathbb{V}}D_{\mathbf{v}}^{\alpha}f_n(x)\phi(x)\,dx}\\
&=&\lim_{n\to\infty}(-1)^{|\alpha|}\int_{\mathbb{V}}f_n(x) D_{\mathbf{v}}^{\alpha}\phi(x)\,dx=(-1)^{|\alpha|}\int_{\mathbb{V}}f(x)D_{\mathbf{v}}^{\alpha}\phi(x)\,dx
\end{eqnarray*}
and thus $D_{\mathbf{v}}^{\alpha}f=g_\alpha\in L^2(\mathbb{V})$. Since this is true for each $\alpha$ such that $|\alpha:2\mathbf{m}|\leq 1$, we have $f\in W_{\mathbf{v}}^{2\mathbf{m},2}(\mathbb{V})$.

Conversely, let $f\in W_{\mathbf{v}}^{2\mathbf{m},2}(\mathbb{V})$ and, given the density of $C_0^\infty(\mathbb{V})$ in $W_{\mathbf{v}}^{2\mathbf{m},2}(\mathbb{V})$, let $\{f_n\}$ be a sequence of $C^\infty_0$ functions for which $f_n \to f$ in $W_{\mathbf{v}}^{2\mathbf{m},2}(\mathbb{V})$. Consequently, we have $D_{\mathbf{v}}^{\alpha}f_n \to D_{\mathbf{v}}^{\alpha}f$ in $L^2(\mathbb{V})$ for each multi-index $\alpha$ for which $|\alpha:2\mathbf{m}|\leq 1$. In particular, $f_n\to f$ and 
\begin{equation*}
\lim_{n\to\infty}\Lambda f_n=\lim_{n\to\infty}\sum_{|\alpha:\mathbf{m}|=2}a_{\alpha}D_{\mathbf{v}}^{\alpha}f_n=\sum_{|\alpha:\mathbf{m}|=2}a_{\alpha}D_{\mathbf{v}}^{\alpha}f
\end{equation*}
in $L^2(\mathbb{V})$. As $\Lambda$ is self-adjoint, it is closed and so necessarily $f\in \Dom(\Lambda)$. 
\end{proof}

\section{Ultracontractivity and Sobolev-type inequalities}

In this section we show that (self-adjoint) positive-homogeneous operators have many desirable properties shared by elliptic operators. In particular, for a self-adjoint positive-homogeneous operator $\Lambda$, we will prove corresponding Nash and Gagliardo-Nirenberg inequalities. \\

\noindent Let $\Lambda$ be a self-adjoint positive-homogeneous operator on $\mathbb{V}$ with symbol $R$ and homogeneous order $\mu_{\Lambda}$. In view of Proposition \ref{prop:DirichletOperator}, $\Lambda$ determines a self-adjoint positive-homogeneous operator on $L^2(\mathbb{V})$, $\Lambda_{\mathbb{V}}$. By an abuse of notation we shall write $\Lambda=\Lambda_{\mathbb{V}}$ and $Q_{\Lambda_{\mathbb{V}}}=Q_{\Lambda}$. Using the spectral calculus, define the semigroup $\{e^{-t\Lambda}\}$; this is a $C_0$-contraction semigroup of self-adjoint operators on $L^2(\mathbb{V})$. It should be no surprise that the semigroup $e^{-t\Lambda}$, defined here by the spectral calculus, coincides with that given by the Fourier transform; this, in particular, is verified by the following lemma.

\begin{lemma}\label{lem:Ultracontractivity}
For $f\in L^2(\mathbb{V})$ and $t>0$,
\begin{equation}\label{convolutionsemigroupeq}
\left(e^{-t\Lambda}f\right)(x)=\int_{\mathbb{V}}K_{\Lambda}(t,x-y)f(y)dy
\end{equation}
almost everywhere, where $K_{\Lambda}(t,x)=(e^{-tR})^{\vee}(x)\in\mathcal{S}(\mathbb{V})$. For each $t>0$, this formula extends $\{e^{-t\Lambda}\}$ to a bounded operator from $L^p(\mathbb{V})$ to $L^q(\mathbb{V})$ for any $1\leq p,q\leq \infty$. Furthermore, for each $1\leq p,q\leq\infty$, there exists $C_{p,q}>0$ such that
\begin{equation*}
\|e^{-t\Lambda}\|_{p\rightarrow q}\leq \frac{C_{p,q}}{t^{\mu_{\Lambda}(1/p-1/q)}}
\end{equation*}
for all $t>0$. In particular, the semigroup is ultracontractive with
\begin{equation*}
\|e^{-t\Lambda}\|_{2\rightarrow\infty}\leq \frac{C_{2,\infty}}{t^{\mu_{\Lambda}/2}}
\end{equation*}
for all $t>0$. 
\end{lemma}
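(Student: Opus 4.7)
The plan is to realize $e^{-t\Lambda}$ as a Fourier multiplier, verify that the multiplier is Schwartz, and then convert $L^p$--$L^q$ bounds into kernel estimates through Young's inequality. By Proposition \ref{prop:esa}, $\Lambda$ is the unique self-adjoint extension of $\Lambda\vert_{C_0^\infty(\mathbb{V})}$. On Schwartz functions one has $\widehat{\Lambda f}(\xi) = R(\xi)\hat f(\xi)$, and $R$ is real-valued because $\Lambda$ is symmetric; hence the Fourier transform conjugates $\Lambda\vert_{C_0^\infty(\mathbb{V})}$ to the multiplication operator by $R$, whose maximal realization $M_R$ on $L^2(\mathbb{V}^*)$ is self-adjoint. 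By uniqueness of self-adjoint extensions, $\mathcal{F}\Lambda\mathcal{F}^{-1}=M_R$, and the functional calculus then gives $e^{-t\Lambda}=\mathcal{F}^{-1}M_{e^{-tR}}\mathcal{F}$, so for every $f \in L^2(\mathbb{V})$ we have $e^{-t\Lambda}f = (e^{-tR}\hat f)^\vee$.

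Next I verify that $K_{\Lambda}(t,\cdot) = (e^{-tR})^{\vee} \in \mathcal{S}(\mathbb{V})$. Fix a diagonalizable $E \in \Exp(\Lambda)$ (which exists by Definition \ref{def:HomogeneousOperators}) and let $F=E^*\in\Exp(P)$, so that $R(s^F\xi) = sR(\xi)$ for all $s>0$. Combining positive-definiteness of $R$ with this homogeneity via Lemma \ref{lem:Scaling} yields a polynomial lower bound of the form $R(\xi)\ge c\|\xi\|^{\epsilon}$ for $\|\xi\|$ large (in any linear coordinate system), so $e^{-tR}$ and each of its derivatives, being polynomial multiples of $e^{-tR}$, decay faster than any polynomial. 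Therefore $K_{\Lambda}(t,\cdot)$ is Schwartz, and the Fourier convolution theorem converts the identity $e^{-t\Lambda}f = (e^{-tR}\hat f)^\vee$ into \eqref{convolutionsemigroupeq} for $f\in L^2(\mathbb{V})$.

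For the $L^p$--$L^q$ estimates, the idea is to extract a scaling identity for $K_{\Lambda}$ and invoke Young's inequality. Changing variables $\xi = t^{-F}\eta$ in the defining integral $K_{\Lambda}(t,x)=\int_{\mathbb{V}^*}e^{-i\xi(x)}e^{-tR(\xi)}\,d\xi$ and using $R(t^{-F}\eta)=t^{-1}R(\eta)$, $d\xi = t^{-\mu_{\Lambda}}d\eta$, and the duality $(t^{-F}\eta)(x) = \eta(t^{-E}x)$ produces the dilation identity $K_{\Lambda}(t,x) = t^{-\mu_{\Lambda}}K_{\Lambda}(1,t^{-E}x)$. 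A further change of variables then yields $\|K_{\Lambda}(t,\cdot)\|_{r} = t^{-\mu_{\Lambda}(1-1/r)}\|K_{\Lambda}(1,\cdot)\|_{r}$ for every $r\in[1,\infty]$, each norm on the right being finite by the Schwartz property. Young's inequality with $1+1/q = 1/p + 1/r$ (equivalently $1-1/r = 1/p - 1/q$) then delivers
\[
\|e^{-t\Lambda}f\|_{q} \le \|K_{\Lambda}(t,\cdot)\|_{r}\|f\|_{p} \le C_{p,q}\,t^{-\mu_{\Lambda}(1/p-1/q)}\|f\|_{p},
\]
first for $f\in L^p\cap L^2$ and then on all of $L^p$ by density (the case $p=\infty$ can be handled directly, since $K_{\Lambda}(t,\cdot)\in L^1$). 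The ultracontractive bound is the special case $p=2$, $q=\infty$.

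The main obstacle is the operator-theoretic identification $\mathcal{F}\Lambda\mathcal{F}^{-1} = M_R$ as unbounded self-adjoint operators, which rests entirely on the essential self-adjointness of $\Lambda\vert_{C_0^\infty(\mathbb{V})}$ supplied by Proposition \ref{prop:esa}; once that is in hand, every remaining step is a routine Schwartz-class computation organized around the scaling identity, with finiteness of norms flowing from positive-definiteness of $R$ via Lemma \ref{lem:Scaling}.
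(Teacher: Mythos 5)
Your proof is correct and follows essentially the same route as the paper: both arguments hinge on Proposition \ref{prop:esa} (essential self-adjointness of $\Lambda\vert_{C_0^\infty(\mathbb{V})}$) to identify $e^{-t\Lambda}$ with convolution by $K_\Lambda$, and then obtain the $L^p\to L^q$ bounds from the scaling identity $K_\Lambda(t,x)=t^{-\mu_\Lambda}K_\Lambda(1,t^{-E}x)$ together with Young's inequality. The only cosmetic difference is that you conjugate $\Lambda$ directly by the Fourier transform and invoke the functional calculus, whereas the paper proceeds by computing the generator of the convolution semigroup on $C_0^\infty$ and matching it with $-\Lambda$; this is the same idea expressed in a dual form. (One small note: Lemma \ref{lem:Scaling} gives upper bounds $|\xi^\alpha|\lesssim R(\xi)+1$ rather than a lower bound $R(\xi)\gtrsim \|\xi\|^\epsilon$; the latter comes directly from positive-definiteness of $R$ together with contraction of $\{t^{E^*}\}$, as in the proof of Proposition \ref{prop:ComparePoly}, but the conclusion that $e^{-tR}\in\mathcal{S}(\mathbb{V}^*)$ is of course correct.)
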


\begin{remark}\label{rmk:Ultracontractivity}
A $C_0$-semigroup $\{T_t\}$ of self-adjoint operators on $L^2$ is said to be ultracontractive if, for each $t>0$, $T_t$ is a bounded operator from $L^2$ to $L^\infty$. We note that this condition immediately implies (by duality) that, for each $t>0$, $T_t$ is a bounded operator from $L^1$ to $L^\infty$ and this is often (though not exclusively, e.g., \cite{Gross1993}) taken to be the definition of ultracontractivity, see \cite{Coulhon1996}. Our terminology is not meant to imply (as it does in the case of Markovian semigroups) that the semigroup is contractive on $L^p$ for any $p$; it usually isn't.
\end{remark}

\begin{proof}[Proof of Lemma \ref{lem:Ultracontractivity}]
We first verify the representation formula \eqref{convolutionsemigroupeq}. Using the Fourier transform, one sees easily that convolution by $K_{\Lambda}$ defines a $C_0$-contraction semigroup on $L^2(\mathbb{V})$ of self-adjoint operators. Denote this semigroup and its corresponding generator by $T_t$ and $A$ respectively and note that $A$ is necessarily self-adjoint. For each $f\in C_0^{\infty}(\mathbb{V})$, observe that
\begin{equation*}
\lim_{t\rightarrow 0}\left\|t^{-1}\left(T_tf-f\right)+\Lambda f\right\|_2=\lim_{t\rightarrow 0}\left\|\left(t^{-1}(e^{-tR(\xi)}-1)+R(\xi)\right )\hat{f}(\xi)\right\|_{2^*}=0
\end{equation*}
where we have appealed to the dominated convergence theorem and the fact that $\mathcal{F}(\Lambda f)=R\hat{f}$. Consequently, $C_0^{\infty}(\mathbb{V})\subseteq\Dom(A)$ and $Af=-\Lambda f$ for all $f\in C_{0}^{\infty}(\mathbb{V})$. In view of Proposition \ref{prop:esa}, $\Lambda\vert_{C_0^{\infty}(\mathbb{V})}$ is essentially self-adjoint and so it must be the case that $A=-\Lambda$ and hence $T_t=e^{-\Lambda t}$ as claimed.

Finally, we establish the $L^p\rightarrow L^q$ estimates for $\{e^{-t\Lambda}\}$. In view of the representation \eqref{convolutionsemigroupeq} and Young's inequality for convolution,
\begin{equation*}
\|e^{-t\Lambda}\|_{p\to q}\leq \|K_{\Lambda}(t,\cdot)\|_s
\end{equation*}
where $1-\frac{1}{s}=\frac{1}{p}-\frac{1}{q}$. For $t>0$ and $E\in \Exp(\Lambda)$, we have
\begin{eqnarray*}
K_\Lambda(t,x)&=&\int_{\mathbb{V}^*}e^{-tR(\xi)}e^{-i\xi(x)}\,d\xi=\int_{\mathbb{V}^*}e^{-R(t^{E^*}\xi)}e^{-i\xi(x)}\,d\xi\\
&=&t^{-\tr E^*}\int_{\mathbb{V}^*}e^{-R(\xi)}e^{-i (t^{-E^*}\xi)(x)}\,dx\\
&=&t^{-\mu_{\Lambda}}K_{\Lambda}(1,t^{-E}x)
\end{eqnarray*}
for $x\in\mathbb{V}$ where we made a change of variables $\xi\mapsto t^{-E^*}\xi$. By making the analogous change of variables $x\mapsto t^Ex$, we obtain
\begin{eqnarray*}
\lefteqn{\|K_\Lambda(t,\cdot)\|_s=t^{-\mu_\Lambda}\|K_{\Lambda}(1,t^E(\cdot))\|_s}\\
&=&t^{-\mu_{\Lambda}+\mu_{\Lambda}/s}\|K_{\Lambda}(1,\cdot)\|_s=t^{-\mu_{\Lambda}(1/p-1/q)}\|K_{\Lambda}(1,\cdot)\|_s
\end{eqnarray*}
for $t>0$. The desired result follows by taking $C_{p,q}=\|K_{\Lambda}(1,\cdot)\|_s$ where $s=(1+1/q-1/p)^{-1}$.
\end{proof}

\begin{proposition}[Nash's inequality]
Let $\Omega$ be a non-empty open subset of $\mathbb{V}$ and let $\Lambda$ be a symmetric positive-homogeneous operator with homogeneous order $\mu_{\Lambda}$. We consider the self-adjoint operator $\Lambda_{\Omega}$ and its form $Q_{\Lambda_\Omega}$ given by Proposition \ref{prop:DirichletOperator}.  There exists $C>0$ such that
\begin{equation*}
\|f\|_{L^2(\Omega)}^{1+1/\mu_{\Lambda}}\leq C Q_{\Lambda_{\Omega}}(f)^{1/2}\|f\|_{L^1(\Omega)}^{1/\mu_{\Lambda}}
\end{equation*}
for all $f\in \Dom(Q_{\Lambda_{\Omega}})\cap L^1(\Omega)$.
\end{proposition}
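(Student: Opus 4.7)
The plan is to prove Nash's inequality by the classical Fourier-splitting argument, carried out in a way that respects the anisotropy encoded in the exponent set of $\Lambda$. Fix $f\in \Dom(Q_{\Lambda_\Omega})\cap L^1(\Omega)$ and let $f_*\in L^2(\mathbb{V})$ be its extension by zero as in \eqref{eq:ExtensionDefinition}. By Lemma \ref{lem:SobolevEmbedding} and the definition of $Q_{\Lambda_\Omega}$, Plancherel's identity gives
\begin{equation*}
\|f\|_{L^2(\Omega)}^2=\int_{\mathbb{V}^*}|\widehat{f_*}(\xi)|^2\,d\xi,\qquad Q_{\Lambda_\Omega}(f)=\int_{\mathbb{V}^*}R(\xi)|\widehat{f_*}(\xi)|^2\,d\xi,
\end{equation*}
and the Hausdorff-Young bound $\|\widehat{f_*}\|_\infty\leq \|f_*\|_1=\|f\|_{L^1(\Omega)}$.

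The next step is to split the Fourier integral using the natural sublevel sets of the symbol. For $r>0$ set $B_r=\{\xi\in\mathbb{V}^*:R(\xi)\leq r\}$. Then
\begin{equation*}
\|f\|_{L^2(\Omega)}^2=\int_{B_r}|\widehat{f_*}|^2\,d\xi+\int_{B_r^c}|\widehat{f_*}|^2\,d\xi\leq |B_r|\,\|f\|_{L^1(\Omega)}^2+\frac{1}{r}\,Q_{\Lambda_\Omega}(f),
\end{equation*}
where on $B_r^c$ we used $1\leq R(\xi)/r$. The key point is that $|B_r|$ scales as an explicit power of $r$: pick any $E\in\Exp(\Lambda)$ and note that $R(t^{E^*}\xi)=tR(\xi)$ by \eqref{eq:homofsymbol}, hence $B_r=r^{E^*}B_1$. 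Since $\det(r^{E^*})=r^{\tr E^*}=r^{\mu_\Lambda}$, the change of variables $\xi\mapsto r^{E^*}\xi$ yields $|B_r|=r^{\mu_\Lambda}|B_1|$. Positive-definiteness of $R$ together with the scaling relation forces $B_1$ to be compact (and hence of finite Lebesgue measure), which should follow from the scaling lemma invoked earlier in the text (\ref{lem:Scaling}) — this is the one small point of bookkeeping I expect to need.

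Having established
\begin{equation*}
\|f\|_{L^2(\Omega)}^2\leq |B_1|\,r^{\mu_\Lambda}\|f\|_{L^1(\Omega)}^2+\frac{1}{r}\,Q_{\Lambda_\Omega}(f),
\end{equation*}
I then optimize over $r>0$. Setting the two terms equal up to a constant by choosing $r^{\mu_\Lambda+1}=Q_{\Lambda_\Omega}(f)\big/\bigl(\mu_\Lambda|B_1|\|f\|_{L^1(\Omega)}^2\bigr)$ gives
\begin{equation*}
\|f\|_{L^2(\Omega)}^2\leq C\,Q_{\Lambda_\Omega}(f)^{\mu_\Lambda/(\mu_\Lambda+1)}\,\|f\|_{L^1(\Omega)}^{2/(\mu_\Lambda+1)},
\end{equation*}
and raising to the power $(\mu_\Lambda+1)/(2\mu_\Lambda)$ produces the stated inequality. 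We may assume $Q_{\Lambda_\Omega}(f)>0$ and $\|f\|_{L^1(\Omega)}>0$ (otherwise $f=0$), so the optimization is legitimate. The only conceptual obstacle is verifying that $B_1$ has finite measure, which as noted is a consequence of positive-definiteness and homogeneity of $R$; everything else is essentially formal once the anisotropic scaling $|B_r|=r^{\mu_\Lambda}|B_1|$ is in hand.
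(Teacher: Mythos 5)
Your proof is correct, but it is a genuinely different argument from the paper's. You use the classical Fourier-splitting proof of Nash's inequality: reduce to $\Omega=\mathbb{V}$ via the extension $f_*$, decompose the Plancherel integral over the sublevel set $B_r=\{R\leq r\}$ and its complement, control the low-frequency piece by $|B_r|\,\|f\|_1^2$ using Hausdorff--Young and the high-frequency piece by $r^{-1}Q_{\Lambda_\Omega}(f)$, exploit the anisotropic scaling $|B_r|=r^{\mu_\Lambda}|B_1|$ via $E^*\in\Exp(R)$, and optimize over $r$. The paper instead derives Nash's inequality as a consequence of the ultracontractive bound $\|e^{-t\Lambda}\|_{1\to 2}\leq C t^{-\mu_\Lambda/2}$ established in Lemma \ref{lem:Ultracontractivity}: it writes $\|f\|_2\leq\|e^{-t\Lambda}f-f\|_2+\|e^{-t\Lambda}f\|_2$, bounds the first term by $\int_0^t\|\Lambda^{1/2}e^{-s\Lambda}\|_{2\to2}\,ds\cdot Q_\Lambda(f)^{1/2}\lesssim t^{1/2}Q_\Lambda(f)^{1/2}$ via spectral calculus, bounds the second by $C t^{-\mu_\Lambda/2}\|f\|_1$, and optimizes over $t$. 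Your Fourier route is more elementary and self-contained for this specific operator, since $Q_{\Lambda_\Omega}$ has an explicit Fourier-multiplier description; the paper's semigroup route is the one that generalizes to operators without such a description and plugs more naturally into the surrounding machinery (indeed, the very next lemma in the paper runs the same semigroup calculation with $\|\cdot\|_\infty$ in place of $\|\cdot\|_2$ to get the Gagliardo--Nirenberg analogue). The one point you flagged, finiteness of $|B_1|$, is indeed available: $R$ is continuous, positive-definite, and homogeneous under the contracting group $\{t^{E^*}\}$, so $R(\xi)\to\infty$ as $\xi\to\infty$ (argue as in Proposition \ref{prop:ComparePoly}: for $\xi$ outside the unit sphere, $t^{-E^*}\xi\in S$ for some $t>1$ and $R(\xi)=tR(t^{-E^*}\xi)\geq t\inf_S R$), so $B_1$ is compact.
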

\begin{proof}
It suffices to prove the estimate when $\Omega=\mathbb{V}$, for the general result follows from the isometric embedding of $W_{\mathbf{v},0}^{\mathbf{m},2}(\Omega)$ into $W_{\mathbf{v}}^{\mathbf{m},2}(\mathbb{V})$, c.f., Lemma \ref{lem:SobolevEmbedding}, and that of $L^1(\Omega)$ into $L^1(\mathbb{V})$. Again, we will denote $\Lambda_{\mathbb{V}}$ and $Q_{\Lambda_\mathbb{V}}$ by $\Lambda$ and $Q_\Lambda$ respectively. In view of Lemma \ref{lem:Ultracontractivity}, the self-adjointness of $\Lambda$ and duality give $C'>0$ such that
\begin{equation*}
\|e^{-t\Lambda}\|_{1\rightarrow 2}\leq \frac{C'}{t^{\mu_{\Lambda}/2}}
\end{equation*}
for all $t>0$. Thus for any $f\in \Dom(Q_{\Lambda})\cap L^1(\mathbb{V})$,
\begin{eqnarray}\label{ultraimplynasheq}\nonumber
\|f\|_2&\leq&\|e^{-t\Lambda}f-f\|_2+\|e^{-t\Lambda}f\|_2\\\nonumber
&\leq&\left\|\int_0^{t}\frac{d}{ds}e^{-s\Lambda}fds\right\|_2+\frac{C'}{t^{\mu_{\Lambda}/2}}\|f\|_1\\\nonumber
&\leq&\int_{0}^{t}\|\Lambda^{1/2}e^{-s\Lambda}\Lambda^{1/2}f\|_2ds+\frac{C'}{t^{\mu_{\Lambda}/2}}\|f\|_1\\
&\leq& \int_0^t\|\Lambda^{1/2}e^{-s\Lambda}\|_{2\rightarrow 2}dsQ_{\Lambda}(f)^{1/2}+\frac{C'}{t^{\mu_{\Lambda}/2}}\|f\|_1\\\nonumber
\end{eqnarray}
for all $t>0$. By virtue of the spectral theorem, we have
\begin{equation*}
\|\Lambda^{1/2}e^{-s\Lambda}\|_{2\rightarrow 2}\leq \sup_{\lambda>0}|\lambda^{1/2}e^{-s\lambda}|\leq \frac{C''}{s^{1/2}}
\end{equation*}
for all $s>0$ and therefore
\begin{equation*}
\|f\|_2\leq 2C''t^{1/2}Q_{\Lambda}(f)^{1/2}+\frac{C'}{t^{\mu_{\Lambda}}}\|f\|_1
\end{equation*}
for all $t>0$. The result follows by optimizing the above inequality and noting that $\mu_{\Lambda}>0$. 
\end{proof}
\noindent Suppose additionally that $\mu_{\Lambda}<1$. Using ultracontractivity directly, a calculation analogous to \eqref{ultraimplynasheq} yields
\begin{eqnarray*}
\|f\|_{\infty}&\leq&\int_0^t\|e^{-s\Lambda/2}\|_{2\rightarrow\infty}\|\Lambda^{1/2}e^{-s\Lambda/2}\|_{2\rightarrow 2}Q_{\Lambda}(f)^{1/2}\,ds+\frac{C}{t^{\mu_{\Lambda}/2}}\|f\|_2\\
&\leq&C't^{(1-\mu_{\Lambda})/2}Q_{\Lambda}(f)^{1/2}+\frac{C}{t^{\mu_{\Lambda}/2}}\|f\|_2\\
\end{eqnarray*}
for $f\in C_0^{\infty}(\Omega)$ and $t>0$. Upon optimizing with respect to $t$ and using the density of $C_0^{\infty}(\Omega)$ in $W_{\mathbf{v},0}^{\mathbf{m},2}(\Omega)$, we obtain the following lemma:
 
\begin{lemma}\label{nashlikelem}
If $\mu_{\Lambda}<1$ then there is $C>0$ such that for all $f\in W_{\mathbf{v},0}^{\mathbf{m},2}(\Omega)$, $f\in L^\infty(\Omega)$ and
\begin{equation*}
\|f\|_{L^\infty(\Omega)}\leq CQ_{\Lambda_{\Omega}}(f)^{\mu_{\Lambda}/2}\|f\|_{L^2(\Omega)}^{1-\mu_{\Lambda}}.
\end{equation*}
\end{lemma}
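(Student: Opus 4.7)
The plan is to execute the argument sketched in the paragraph immediately preceding the lemma, then take care of the density extension. First I would reduce to the case $f \in C_0^\infty(\Omega)$, and further observe that the zero-extension $f_*$ belongs to $C_0^\infty(\mathbb{V})$, has the same $L^2$ and $L^\infty$ norms as $f$, and satisfies $Q_{\Lambda_\Omega}(f) = Q_\Lambda(f_*)$ by the definition of $Q_{\Lambda_\Omega}$ and Lemma \ref{lem:SobolevEmbedding}. Thus it is enough to prove the inequality with $\Omega = \mathbb{V}$, $\Lambda = \Lambda_\mathbb{V}$, and $f \in C_0^\infty(\mathbb{V})$.

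Next, for such $f$ and $t > 0$, I would split
\begin{equation*}
\|f\|_\infty \leq \|f - e^{-t\Lambda}f\|_\infty + \|e^{-t\Lambda}f\|_\infty.
\end{equation*}
The second term is bounded directly by the ultracontractivity estimate of Lemma \ref{lem:Ultracontractivity}: $\|e^{-t\Lambda}f\|_\infty \leq C t^{-\mu_\Lambda/2}\|f\|_2$. For the first term I would write, using the fact that $f \in \Dom(\Lambda)$ (Proposition \ref{prop:esa}),
\begin{equation*}
f - e^{-t\Lambda}f = \int_0^t \Lambda e^{-s\Lambda}f\,ds = \int_0^t e^{-s\Lambda/2}\Lambda^{1/2}\bigl(\Lambda^{1/2}e^{-s\Lambda/2}f\bigr)\,ds,
\end{equation*}
and estimate the integrand in $L^\infty$ by $\|e^{-s\Lambda/2}\|_{2\to\infty}\,\|\Lambda^{1/2}e^{-s\Lambda/2}\|_{2\to 2}\,\|\Lambda^{1/2}f\|_2$. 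Lemma \ref{lem:Ultracontractivity} gives $\|e^{-s\Lambda/2}\|_{2\to\infty} \leq C s^{-\mu_\Lambda/2}$; the spectral theorem applied to $\Lambda \geq 0$ gives $\|\Lambda^{1/2}e^{-s\Lambda/2}\|_{2\to 2} \leq C s^{-1/2}$; and $\|\Lambda^{1/2}f\|_2^2 = Q_\Lambda(f)$. Combining and integrating $s^{-(\mu_\Lambda + 1)/2}$ from $0$ to $t$ yields $\|f - e^{-t\Lambda}f\|_\infty \leq C' t^{(1-\mu_\Lambda)/2} Q_\Lambda(f)^{1/2}$.

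Putting the two pieces together gives
\begin{equation*}
\|f\|_\infty \leq C' t^{(1-\mu_\Lambda)/2} Q_\Lambda(f)^{1/2} + C t^{-\mu_\Lambda/2}\|f\|_2
\end{equation*}
for every $t > 0$. Choosing $t$ to balance the two terms (i.e., $t^{1/2} \sim \|f\|_2 / Q_\Lambda(f)^{1/2}$) produces the claimed bound $\|f\|_\infty \leq C'' Q_\Lambda(f)^{\mu_\Lambda/2}\|f\|_2^{1-\mu_\Lambda}$. Finally, for general $f \in W_{\mathbf{v},0}^{\mathbf{m},2}(\Omega)$ I would pick a sequence $f_n \in C_0^\infty(\Omega)$ converging to $f$ in the form norm; the inequality just proved gives a uniform $L^\infty$ bound on $\{f_n\}$, and after passing to an a.e.-convergent subsequence one concludes $f \in L^\infty(\Omega)$ with the same bound.

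The main obstacle in this argument is precisely the hypothesis $\mu_\Lambda < 1$: the integral $\int_0^t s^{-(\mu_\Lambda+1)/2}\,ds$ converges at $s = 0$ if and only if $\mu_\Lambda < 1$, and without this condition the direct use of ultracontractivity as above fails. The later sections presumably handle the $\mu_\Lambda \geq 1$ case by iterating the semigroup argument or using higher powers of $\Lambda$, consistent with the authors' remark in the introduction that perturbation estimates for integer powers of $H$ are needed to lift the dimension-order restriction.
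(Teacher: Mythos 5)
Your proof is correct and follows essentially the same route as the paper's: the paper's sketch in the paragraph preceding the lemma performs exactly the decomposition $\|f\|_\infty\leq\|f-e^{-t\Lambda}f\|_\infty+\|e^{-t\Lambda}f\|_\infty$, bounds the two pieces via ultracontractivity and the spectral theorem as you do, optimizes in $t$, and invokes density of $C_0^\infty(\Omega)$ in $W_{\mathbf{v},0}^{\mathbf{m},2}(\Omega)$. Your explicit reduction to $\Omega=\mathbb{V}$ via zero-extension mirrors the reduction the authors make at the start of their Nash inequality proof, and the density step at the end is the one they leave implicit.
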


\noindent Lemma \ref{nashlikelem} is the analog of the Gagliardo-Nirenberg inequality in our setting.

\section{Fundamental Hypotheses}\label{sec:FundamentalHypotheses}

Let $\Omega$ be a non-empty open subset of $\mathbb{V}$. In this section, we will introduce three hypotheses concerning a symmetric sesquilinear form $Q$ (also called Hermitian form) defined on $C_0^\infty(\Omega)$ viewed as a subspace of the Hilbert space $L^2(\Omega)$. The first hypothesis will guarantee that the form is closable and its closure is associated to a self-adjoint operator $H$ on $L^2(\Omega)$. It is under these hypotheses that we will be able to establish the existence of the heat kernel for $H$ and prove corresponding off-diagonal estimates. Our construction is based on E. B. Davies' article \cite{Davies1995}, wherein a general class of higher order self-adjoint uniformly elliptic operators on $\mathbb{R}^d$ is studied. In what follows (and for the next three sections) $\|\cdot\|_2$ denotes the $L^2(\Omega)$ norm, $\langle\cdot,\cdot\rangle$ denotes its inner product. All mentions of a positive-homogeneous operator $\Lambda$ refer to the self-adjoint operator $\Lambda_{\Omega}$ of Proposition \ref{prop:DirichletOperator}. Correspondingly, $Q_{\Lambda_{\Omega}}$ is denoted by $Q_{\Lambda}$.\\

\begin{hypothesis}\label{hyp:Garding}
Let $Q$ be as above. There exists a self-adjoint positive-homogeneous operator $\Lambda$ with corresponding symmetric sesquilinear form $Q_{\Lambda}$ such that
\begin{equation}\label{eq:Garding}
\frac{1}{2}Q_{\Lambda}(f)\leq Q(f)\leq C(Q_{\Lambda}(f)+||f||_2^2) 
\end{equation}
for all $f\in C_0^{\infty}(\Omega)$ where $C\geq 1$. 
\end{hypothesis}

\noindent As noticed above, Hypothesis \ref{hyp:Garding} guarantees that $Q$ is bounded below and therefore closable. Its closure, which we still denote by $Q$, defines uniquely a self-adjoint operator $H$; we refer to $H$ as the operator associated to $Q$. Hypothesis \ref{hyp:Garding} is a comparability statement between $H$ and the positive-homogeneous operator $\Lambda$; for this reason, we say that $\Lambda$ is a \emph{reference operator} for $H$ (and for $Q$). In this way, \eqref{eq:Garding} is analogous to G\r{a}rding's inequality in that the latter compares second-order elliptic operators to the Laplacian.

\begin{remark} Necessarily, $C_0^\infty(\Omega)$ is a core for $Q$ and we have 
\begin{equation*}\Dom(H)\cup C_0^\infty(\Omega)\subseteq \Dom(Q)\subseteq L^2(\Omega).
\end{equation*}
It may however be the case that $\Dom(H)\cap C_0^\infty(\Omega)=\{0\}$, c.f., \cite{Davies1997}.
 \end{remark}

\noindent The inequality $\eqref{eq:Garding}$ further ensures that $\Dom(Q)=\Dom(Q_{\Lambda})$ and that $H\geq 0$. In view of Proposition \ref{prop:DirichletOperator}, there exist $\mathbf{m}\in\mathbb{N}^d$ and a basis $\mathbf{v}$ of $\mathbb{V}$ such that 
\begin{equation*}
\Dom(Q)=\Dom(Q_{\Lambda})=W_{\mathbf{v},0}^{\mathbf{m},2}(\Omega) 
\end{equation*}
and, because $C_0^\infty(\Omega)$ is dense in $W_{\mathbf{v},0}^{\mathbf{m},2}(\Omega)$, \eqref{eq:Garding} holds for all $f$ in this common domain. These remarks are summarized in the following lemma:

\begin{lemma}\label{gardingsobolevlem}
Let $Q$ satisfy Hypothesis \ref{hyp:Garding} with reference operator $\Lambda$. The associated operator $H$ is non-negative and
\begin{equation*}
\Dom(Q)=W_{\mathbf{v},0}^{\mathbf{m},2}(\Omega)
\end{equation*}
where $\mathbf{m}$ and $\mathbf{v}$ are those associated with $\Lambda$ via Proposition \ref{prop:DirichletOperator}. Moreover, \eqref{eq:Garding} holds for all $f$ in this common domain.
\end{lemma}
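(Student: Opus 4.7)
The plan is to transfer the characterization of $\Dom(Q_\Lambda)$ given in Proposition \ref{prop:DirichletOperator} over to $\Dom(Q)$ by exploiting the form-graph-norm equivalence furnished by Hypothesis \ref{hyp:Garding}. Since $R$ is positive-definite (by the definition of a positive-homogeneous operator), $Q_\Lambda(f) \geq 0$ on $C_0^\infty(\Omega)$, and so the lower bound in \eqref{eq:Garding} immediately gives $Q(f) \geq 0$ there as well. Hence $Q$ is a densely defined, symmetric, non-negative form on $L^2(\Omega)$; in particular it is closable, and the associated self-adjoint operator $H$ satisfies $H \geq 0$ via the standard quadratic-form criterion $\langle Hf, f\rangle = Q(f) \geq 0$ on $\Dom(H) \subseteq \Dom(Q)$.

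The heart of the argument is the comparison of form-graph norms. Setting
\[
\|f\|_Q^2 := \|f\|_2^2 + Q(f), \qquad \|f\|_{Q_\Lambda}^2 := \|f\|_2^2 + Q_\Lambda(f),
\]
the two inequalities in \eqref{eq:Garding} directly yield constants $c, C_1 > 0$ with $c \|f\|_{Q_\Lambda} \leq \|f\|_Q \leq C_1 \|f\|_{Q_\Lambda}$ for every $f \in C_0^\infty(\Omega)$. Proposition \ref{prop:DirichletOperator} asserts that $C_0^\infty(\Omega)$ is a core for $Q_\Lambda$ and that its $\|\cdot\|_{Q_\Lambda}$-closure inside $L^2(\Omega)$ is precisely $W_{\mathbf{v},0}^{\mathbf{m},2}(\Omega)$, with $\|\cdot\|_{Q_\Lambda}$ equivalent to the Sobolev norm. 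The equivalence above then forces the $\|\cdot\|_Q$-closure of $C_0^\infty(\Omega)$ inside $L^2(\Omega)$ to coincide, as a set with compatible topology, with that same Sobolev space. This gives $\Dom(Q) = W_{\mathbf{v},0}^{\mathbf{m},2}(\Omega) = \Dom(Q_\Lambda)$.

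To extend \eqref{eq:Garding} from $C_0^\infty(\Omega)$ to the full common domain, I would fix $f \in W_{\mathbf{v},0}^{\mathbf{m},2}(\Omega)$, select an approximating sequence $\{f_n\} \subset C_0^\infty(\Omega)$ with $f_n \to f$ in $\|\cdot\|_{Q_\Lambda}$ (and hence, by the equivalence, in $\|\cdot\|_Q$), and pass to the limit in \eqref{eq:Garding}; this is legitimate because both forms are continuous in their respective graph norms once closed. I do not foresee any real obstacle: the lemma is essentially a bookkeeping consequence of Hypothesis \ref{hyp:Garding} combined with Proposition \ref{prop:DirichletOperator}, and the only subtlety is to verify that the graph-norm topologies are compatible with the ambient $L^2(\Omega)$ embedding, which is automatic once the two-sided bound is in hand.
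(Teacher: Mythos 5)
Your argument is correct and follows the same route the paper takes in the discussion preceding Lemma \ref{gardingsobolevlem}: the lower bound in \eqref{eq:Garding} gives closability and $H\geq 0$, the two-sided bound yields equivalence of the form-graph norms and hence identical closures and domains, Proposition \ref{prop:DirichletOperator} identifies that common domain as $W_{\mathbf{v},0}^{\mathbf{m},2}(\Omega)$, and density of $C_0^\infty(\Omega)$ extends the inequality. The paper presents these remarks informally rather than as a boxed proof, but your version is simply a more explicit write-up of the same bookkeeping.
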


\noindent In view of the preceding lemma, any future reference to a sesquilinear form $Q$ which satisfies Hypothesis \ref{hyp:Garding} with reference operator $\Lambda$ is a reference to the closed form $Q$ whose domain is characterized by Lemma \ref{gardingsobolevlem} and has associated self-adjoint operator $H$. For the most part, as is done in \cite{Davies1995}, we will avoid identifing $\Dom(H)$ as it generally won't be necessary. By virtue of Lemma \ref{gardingsobolevlem} and Theorem 1.53 of \cite{Ouhabaz2009}, $-H$ generates a strongly continuous semigroup $T_t=e^{-tH}$ on $L^2(\Omega)$ which is a bounded holomorphic semigroup on a non-trivial sector of $\mathbb{C}$. The main goal of this article is to show that the semigroup $T_t$ has an integral kernel $K_H$ satisfying off-diagonal estimates in terms of the Legendre-Fenchel transform of $R$; we refer the reader to Section 3 of \cite{Randles2017} and Appendix \ref{Appendix:LF} of this article for the definition and useful properties of the Legendre-Fenchel transform of $R$. Under the hypotheses given in this section, we obtain these off-diagonal estimates by means of Davies' perturbation method, suitably adapted to our naturally anisotropic setting. Specifically, we study perturbations of the semigroup $T_t$ formed by conjugating $T_t$ by ``nice" operators. Denoting by $C^\infty(\Omega,\Omega)$ the set of smooth functions mapping $\Omega$ into itself, we set
\begin{equation*}
C_{\infty}^{\infty}(\Omega,\Omega)=\{\phi\in C^{\infty}(\Omega,\Omega): \partial_v^k(\lambda(\phi))\in L^{\infty}(\Omega)\\,\forall \,v\in\mathbb{V},\lambda\in\mathbb{V}^*\mbox{ and }k\geq 0\}.
\end{equation*}
Given $\phi\in C_{\infty}^{\infty}(\Omega,\Omega)$ and $\lambda\in\mathbb{V}^*$, we consider the smooth functions $e^{\lambda(\phi)}$ and $e^{-\lambda(\phi)}$; these will act as bounded and real-valued multiplication operators on $L^2(\Omega)$. For each such $\lambda$ and $\phi$, we define the \textit{twisted} semigroup $T^{\lambda,\phi}_t$ on $L^2(\Omega)$ by
\begin{equation*}
T_t^{\lambda,\phi}=e^{\lambda(\phi)}T_te^{-\lambda(\phi)}
\end{equation*}
for $t>0$. For any $f\in L^2(\Omega)$ such that $e^{-\lambda(\phi)}f\in \Dom(H)$, observe that
\begin{eqnarray*}
e^{\lambda(\phi)}(-H)e^{-\lambda(\phi)}f&=&e^{\lambda(\phi)}\lim_{t\rightarrow 0}\frac{T_t(e^{-\lambda(\phi)}f)-(e^{-\lambda(\phi)}f)}{t}\\
&=&\lim_{t\rightarrow 0}\frac{T^{\lambda,\phi}_tf-f}{t}
\end{eqnarray*}
where we have used the fact that $e^{\lambda(\phi)}$ acts as a bounded multiplication operator on $L^2(\Omega)$. Upon pushing this argument a little further one sees that $T_t^{\lambda,\phi}$ has infinitesimal generator $-H_{\lambda,\phi}=-e^{\lambda(\phi)}He^{-\lambda(\phi)}=e^{\lambda(\phi)}(-H)e^{-\lambda(\phi)}$ and 
\begin{equation*}
\Dom(H_{\lambda,\phi})=\left\{f\in L^2(\Omega): e^{-\lambda(\phi)}f\in\Dom(H)\right\}.
\end{equation*}
We also note that, in view of the resolvent characterization of bounded holomorphic semigroups, e.g., Theorem 1.45 of \cite{Ouhabaz2009}, it is straightforward to verify that $\{T_t^{\lambda,\phi}\}$ is a bounded holomorphic semigroup on $L^2(\Omega)$.
\begin{remark}
This construction for $T^{\lambda,\phi}_t$ is similar to that done in \cite{Davies1995}. The difference being that $\lambda$ for us is a ``multi-parameter'' whereas in \cite{Davies1995} it is a scalar. This construction is the basis behind the suitable adaptation of Davies' method for positive-homogeneous operators, discussed in the introductory section of this article. 
\end{remark}

\noindent In the same spirit, define \textit{twisted} form $Q_{\lambda,\phi}$ by
\begin{equation*}
Q_{\lambda,\phi}(f,g)=Q(e^{-\lambda(\phi)}f,e^{\lambda(\phi)}g)
\end{equation*}
for all $f,g\in \Dom(Q_{\lambda,\phi}):=\Dom(Q)$. This definition is meaningful because multiplication by $e^{\pm\lambda(\phi)}$ is continuous on $\Dom(Q)=W_{\mathbf{v},0}^{\mathbf{m},2}(\Omega)$. As usual, we write $Q_{\lambda,\phi}(f)=Q_{\lambda,\phi}(f,f)$ for $f\in\Dom(Q_{\lambda,\phi})$ and we note that $Q_{\lambda,\phi}$ isn't symmetric or real-valued. As the next lemma shows, $H_{\lambda,\phi}$ corresponds to $Q_{\lambda,\phi}$ in the usual sense.

\begin{lemma}\label{formgeneratorlambdaphilem}
For any $\lambda\in\mathbb{V}^*$ and $\phi\in C_{\infty}^{\infty}(\Omega,\Omega)$,
\begin{equation*}
\Dom(H_{\lambda,\phi})\subseteq \Dom(Q_{\lambda,\phi})=\Dom(Q)
\end{equation*}
and
\begin{equation*}
Q_{\lambda,\phi}(f)=\langle H_{\lambda,\phi}f,f\rangle
\end{equation*}
for all $f\in\Dom(H_{\lambda,\phi})$.
\end{lemma}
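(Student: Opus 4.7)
My plan is to reduce both assertions to one technical fact: multiplication by $e^{\pm\lambda(\phi)}$ maps $\Dom(Q) = W_{\mathbf{v},0}^{\mathbf{m},2}(\Omega)$ continuously into itself. Granting this, the inclusion $\Dom(H_{\lambda,\phi}) \subseteq \Dom(Q)$ is essentially by construction: if $f \in \Dom(H_{\lambda,\phi})$, then by definition $g := e^{-\lambda(\phi)} f \in \Dom(H)$, and since $\Dom(H) \subseteq \Dom(Q)$ (as $H$ is the self-adjoint operator associated to the closed form $Q$ via Kato's representation theorem), the closure property yields $f = e^{\lambda(\phi)} g \in \Dom(Q)$.

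To establish the key technical fact, I would first work on $W_{\mathbf{v}}^{\mathbf{m},2}(\mathbb{V})$ via the zero-extension of Lemma \ref{lem:SobolevEmbedding} and invoke the Leibniz rule: for any multi-index $\alpha$ with $|\alpha:\mathbf{m}| \leq 1$,
\[
D_{\mathbf{v}}^{\alpha}\bigl(e^{\lambda(\phi)} h\bigr) = \sum_{\beta \leq \alpha} \binom{\alpha}{\beta} \bigl(D_{\mathbf{v}}^{\alpha-\beta} e^{\lambda(\phi)}\bigr)\,D_{\mathbf{v}}^{\beta} h.
\]
The hypothesis $\phi \in C_\infty^\infty(\Omega,\Omega)$ ensures, via a Fa\`a di Bruno expansion combined with polarization to access mixed partials from single-direction derivatives, that each factor $D_{\mathbf{v}}^{\alpha-\beta} e^{\lambda(\phi)}$ lies in $L^\infty(\Omega)$, while $|\beta:\mathbf{m}| \leq |\alpha:\mathbf{m}| \leq 1$ places $D_{\mathbf{v}}^\beta h$ in $L^2(\Omega)$; the product then belongs to $L^2$ with a bound depending only on $\phi$ and $\lambda$. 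To descend from $W_{\mathbf{v}}^{\mathbf{m},2}(\mathbb{V})$ to the closed subspace $W_{\mathbf{v},0}^{\mathbf{m},2}(\Omega)$, approximate $h$ in norm by $\{h_n\} \subseteq C_0^\infty(\Omega)$: each $e^{\lambda(\phi)} h_n$ is smooth with the same compact support inside $\Omega$, and the continuity just proved pushes the limit into $W_{\mathbf{v},0}^{\mathbf{m},2}(\Omega)$.

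For the form identity, fix $f \in \Dom(H_{\lambda,\phi})$ and set $g = e^{-\lambda(\phi)} f \in \Dom(H)$. Since $e^{\lambda(\phi)}$ is bounded and real-valued, it acts as a self-adjoint multiplication operator on $L^2(\Omega)$, so
\[
\langle H_{\lambda,\phi} f, f\rangle = \langle e^{\lambda(\phi)} H g, f\rangle = \langle H g, e^{\lambda(\phi)} f\rangle.
\]
The key technical fact places $e^{\lambda(\phi)} f$ in $\Dom(Q)$, so the standard form-operator correspondence (e.g., Theorem 1.8 of \cite{Ouhabaz2009}) applied to $g \in \Dom(H)$ and $e^{\lambda(\phi)} f \in \Dom(Q)$ yields $\langle H g, e^{\lambda(\phi)} f\rangle = Q(g, e^{\lambda(\phi)} f) = Q(e^{-\lambda(\phi)} f, e^{\lambda(\phi)} f)$, which is exactly $Q_{\lambda,\phi}(f)$ by definition.

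The main obstacle I anticipate is the Leibniz estimate in the anisotropic setting: the definition of $C_\infty^\infty(\Omega,\Omega)$ is phrased in terms of iterated directional derivatives along a single vector, so one must carry out the polarization carefully to obtain $L^\infty$-control on all mixed partials $D_{\mathbf{v}}^{\alpha-\beta} e^{\lambda(\phi)}$, and then verify that the index $|\,\cdot\,:\mathbf{m}|$ behaves well under the product-rule splitting $\alpha = (\alpha-\beta) + \beta$. The uniform bounds encoded in $C_\infty^\infty(\Omega,\Omega)$ are tailored precisely for this, and once the bookkeeping is in hand, the rest of the proof is a clean translation between the operator and form viewpoints.
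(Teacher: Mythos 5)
Your proposal is correct and follows essentially the same route as the paper: you deduce $e^{-\lambda(\phi)}f \in \Dom(H) \subseteq \Dom(Q)$, use the Leibniz rule with the boundedness of derivatives of $e^{\lambda(\phi)}$ to place $f$ back in $\Dom(Q) = W_{\mathbf{v},0}^{\mathbf{m},2}(\Omega)$, and then verify the form identity by the same direct computation $\langle H_{\lambda,\phi}f,f\rangle = \langle H(e^{-\lambda(\phi)}f), e^{\lambda(\phi)}f\rangle = Q(e^{-\lambda(\phi)}f, e^{\lambda(\phi)}f)$. The one point where you are more careful than the paper is in noting that the definition of $C_\infty^\infty(\Omega,\Omega)$ controls only iterated derivatives along a single direction, so a polarization argument is needed to bound the mixed partials $D_{\mathbf{v}}^{\alpha-\beta}e^{\lambda(\phi)}$ appearing in the Leibniz expansion — a detail the paper's proof glosses over but which your observation correctly supplies.
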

\begin{proof}
For $f\in\Dom(H_{\lambda,\phi})$,
\begin{equation*}
 e^{-\lambda(\phi)}f\in \Dom(H)\subseteq\Dom(Q)=W_{\mathbf{v},0}^{\mathbf{m},2}(\Omega).
\end{equation*}
Because $\phi\in C_{\infty}^{\infty}(\Omega,\Omega)$, $\partial_{v_i}^ke^{\lambda(\phi)}\in L^{\infty}(\Omega)$ for all $i=1,2,\dots, d$ and $k\geq 0$ . Using the Leibniz rule it follows that
\begin{equation*}
f=e^{\lambda(\phi)}(e^{-\lambda(\phi)}f)\in W_{\mathbf{v},0}^{\mathbf{m},2}(\Omega)=\Dom(Q_{\lambda,\phi}).
\end{equation*}
We see that,
\begin{equation*}
\langle H_{\lambda,\phi}f,f\rangle=\langle H(e^{-\lambda(\phi)}f),e^{\lambda(\phi)}f\rangle=Q(e^{-\lambda(\phi)}f,e^{\lambda(\phi)}f)=Q_{\lambda,\phi}(f)
\end{equation*}
as desired.
\end{proof}

\noindent Our second fundamental hypothesis is as follows:

\begin{hypothesis}\label{hyp:FormCompare}

Let $Q$ satisfy Hypothesis \ref{hyp:Garding} with reference operator $\Lambda$. There exist $\mathcal{E}\subseteq C_{\infty}^{\infty}(\Omega,\Omega)$ and $M>0$ such that:
\begin{enumerate}[label=\roman*]
\item For each pair $x,y\in\Omega$, there is $\phi\in \mathcal{E}$ for which $\phi(x)-\phi(y)=x-y$.
\item For all $\phi\in\mathcal{E}$, $\lambda\in\mathbb{V}^*$ and $f\in\Dom(Q)$,
\begin{equation}\label{eq:FormCompare1}
|Q_{\lambda,\phi}(f)-Q(f)|\leq \frac{1}{4}(Q(f)+M(1+R(\lambda))\|f\|_2^2)
\end{equation}
\end{enumerate}
where $R$ is the symbol of $\Lambda$. We will call \eqref{eq:FormCompare1} the form comparison inequality.
\end{hypothesis}

\noindent Our next lemma follows immediately from Lemma \ref{formgeneratorlambdaphilem} and Hypothesis \ref{hyp:FormCompare}. Its proof is omitted.
\begin{lemma}\label{lowboundfortwistedformlem}
Let $\phi\in\mathcal{E}$ and $\lambda\in\mathbb{V}^*$. If Hypothesis \ref{hyp:FormCompare} holds,
\begin{equation}\label{Htwistedlemmaeq}
2\Re[Q_{\lambda,\phi}(f)]=2\Re[(H_{\lambda,\phi}f,f)]\geq -\frac{M}{2}(1+R(\lambda))\|f\|_2^2
\end{equation}
for all $f\in\Dom(H_{\lambda,\phi})$.
\end{lemma}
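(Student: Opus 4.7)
The plan is to chain together the two ingredients already in hand: Lemma \ref{formgeneratorlambdaphilem}, which identifies $\langle H_{\lambda,\phi} f,f\rangle$ with $Q_{\lambda,\phi}(f)$ for $f\in\Dom(H_{\lambda,\phi})$, and the form comparison inequality \eqref{eq:FormCompare1} from Hypothesis \ref{hyp:FormCompare}, which controls how far $Q_{\lambda,\phi}$ can stray from $Q$.

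First I would dispose of the equality $2\Re[Q_{\lambda,\phi}(f)]=2\Re[\langle H_{\lambda,\phi}f,f\rangle]$, which is nothing more than applying Lemma \ref{formgeneratorlambdaphilem} and taking real parts; this is why the containment $\Dom(H_{\lambda,\phi})\subseteq\Dom(Q_{\lambda,\phi})$ established there is essential. Second, I would peel off the real part of \eqref{eq:FormCompare1}. Since $Q$ is symmetric, $Q(f)$ is real, so
\begin{equation*}
|\Re[Q_{\lambda,\phi}(f)]-Q(f)|\leq|Q_{\lambda,\phi}(f)-Q(f)|\leq\tfrac{1}{4}\bigl(Q(f)+M(1+R(\lambda))\|f\|_2^2\bigr),
\end{equation*}
which rearranges to
\begin{equation*}
\Re[Q_{\lambda,\phi}(f)]\geq \tfrac{3}{4}Q(f)-\tfrac{M}{4}(1+R(\lambda))\|f\|_2^2.
\end{equation*}

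Finally, Hypothesis \ref{hyp:Garding} gives $Q(f)\geq\tfrac{1}{2}Q_{\Lambda}(f)\geq 0$, so the $Q(f)$ term on the right is non-negative and may be dropped. Multiplying the resulting inequality by $2$ yields the claimed bound. I do not anticipate any real obstacle: the only subtlety is remembering that $Q_{\lambda,\phi}$ is no longer symmetric, which forces us to work with real parts rather than the form itself, and that $f\in\Dom(H_{\lambda,\phi})$ is precisely the regularity needed to pass from the operator expression $\langle H_{\lambda,\phi}f,f\rangle$ to the form expression $Q_{\lambda,\phi}(f)$ where the comparison inequality applies.
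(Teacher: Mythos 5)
Your proof is correct and is exactly the argument the paper has in mind; the paper omits the proof, saying only that the lemma ``follows immediately from Lemma \ref{formgeneratorlambdaphilem} and Hypothesis \ref{hyp:FormCompare},'' and your chain — identify $\langle H_{\lambda,\phi}f,f\rangle$ with $Q_{\lambda,\phi}(f)$ via Lemma \ref{formgeneratorlambdaphilem}, take real parts of the form comparison inequality using that $Q(f)$ is real, then drop the non-negative $\tfrac{3}{4}Q(f)$ term via Hypothesis \ref{hyp:Garding} — is precisely that deduction.
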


\noindent Our final hypothesis is more technical and involves a perturbation estimate for sufficiently high powers of $H$, the self-adjoint operator associated to $Q$. Whereas Hypothesis \ref{hyp:Garding} and \ref{hyp:FormCompare} are easily satisfied, the third hypothesis is much more subtle, difficult to verify and restrictive.

\begin{hypothesis}\label{hyp:kappa}
Let $Q$ satisfy Hypotheses \ref{hyp:Garding} and \ref{hyp:FormCompare} with reference operator $\Lambda$ and associated self-adjoint operator $H$. Further, let $R$ be the symbol and $\mu_{\Lambda}$ be the homogeneous order of $\Lambda$, respectively. Set $\kappa=\min\{n\in\mathbb{N}:\mu_{\Lambda}/n<1\}$ and denote by $Q_{\Lambda^{\kappa}}$ the sesquilinear form corresponding to $\Lambda^{\kappa}$. There is $C>0$ such that, for any $\phi\in\mathcal{E}$ and $\lambda\in\mathbb{V}^*$,
\begin{equation*}
\Dom(H^{\kappa}_{\lambda,\phi})\subseteq \Dom(Q_{\Lambda^{\kappa}})
\end{equation*}
and 
\begin{equation*}
Q_{\Lambda^{\kappa}}(f)\leq C(|\langle H_{\lambda,\phi}^{\kappa}f,f\rangle|+(1+R(\lambda))^{\kappa}\|f\|_2^2)
\end{equation*}
for all $f\in \Dom(H_{\lambda,\phi}^{\kappa})$.
\end{hypothesis}

\noindent In \cite{Davies1995}, the self-adjoint operators considered are required to satisfy Hypothesis \ref{hyp:Garding} in the special case that $\Lambda=(-\Delta)^m$ on $\mathbb{R}^d$ for some $m\in\mathbb{N}$. The theory in \cite{Davies1995} proceeds under only two hypotheses which are paralleled by Hypotheses \ref{hyp:Garding} and \ref{hyp:FormCompare} above respectively. Incidentally, off-diagonal estimates are only shown in the case that $2m<d$ which corresponds to $\mu_{\Lambda}<1$ in our setting. As the proposition below shows, when $\mu_{\Lambda}<1$, Hypothesis \ref{hyp:kappa} is superfluous.

\begin{proposition}\label{prop:kappa}
Let $Q$ satisfy Hypotheses \ref{hyp:Garding} and \ref{hyp:FormCompare} with reference operator $\Lambda$ and associated self-adjoint operator $H$. Let $\mu_{\Lambda}$ be the homogeneous order of $\Lambda$. If $\mu_{\Lambda}<1$, i.e., $\kappa=1$, then Hypothesis \ref{hyp:kappa} holds.
\end{proposition}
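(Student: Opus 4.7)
The strategy is to chain the three facts we already have: (i) Hypothesis \ref{hyp:Garding} controls $Q_{\Lambda}$ by $Q$, (ii) Hypothesis \ref{hyp:FormCompare} controls the difference between $Q$ and the twisted form $Q_{\lambda,\phi}$, and (iii) Lemma \ref{formgeneratorlambdaphilem} identifies $Q_{\lambda,\phi}(f)$ with $\langle H_{\lambda,\phi}f,f\rangle$ on $\Dom(H_{\lambda,\phi})$. When $\kappa=1$, everything reduces to first-order manipulation and no iteration or high-power perturbation theory is needed, which is precisely why the hypothesis becomes automatic in this regime.

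First, I would dispatch the domain inclusion. For $f\in\Dom(H_{\lambda,\phi})$, Lemma \ref{formgeneratorlambdaphilem} gives $f\in\Dom(Q_{\lambda,\phi})=\Dom(Q)$, and then Lemma \ref{gardingsobolevlem} identifies $\Dom(Q)=W_{\mathbf{v},0}^{\mathbf{m},2}(\Omega)=\Dom(Q_{\Lambda})=\Dom(Q_{\Lambda^{\kappa}})$ (the last equality since $\kappa=1$). So the domain condition in Hypothesis \ref{hyp:kappa} holds without further work.

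Next, for the quantitative estimate, I would use that $Q$ is symmetric, so $Q(f):=Q(f,f)$ is real-valued. The form comparison inequality \eqref{eq:FormCompare1} gives
\begin{equation*}
Q(f)\;\le\;\Re Q_{\lambda,\phi}(f)+|Q_{\lambda,\phi}(f)-Q(f)|\;\le\;\Re Q_{\lambda,\phi}(f)+\tfrac{1}{4}\bigl(Q(f)+M(1+R(\lambda))\|f\|_2^2\bigr),
\end{equation*}
which rearranges to $\tfrac{3}{4}Q(f)\le\Re Q_{\lambda,\phi}(f)+\tfrac{M}{4}(1+R(\lambda))\|f\|_2^2$. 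For $f\in\Dom(H_{\lambda,\phi})$, Lemma \ref{formgeneratorlambdaphilem} converts the right-hand side to $\Re\langle H_{\lambda,\phi}f,f\rangle\le|\langle H_{\lambda,\phi}f,f\rangle|$. Combining with the lower bound in \eqref{eq:Garding} (i.e., $Q_{\Lambda}(f)\le 2Q(f)$) yields
\begin{equation*}
Q_{\Lambda}(f)\;\le\;\tfrac{8}{3}|\langle H_{\lambda,\phi}f,f\rangle|+\tfrac{2M}{3}(1+R(\lambda))\|f\|_2^2,
\end{equation*}
so choosing $C=\max\{8/3,\,2M/3\}$ (and recalling $\kappa=1$, so $Q_{\Lambda^{\kappa}}=Q_{\Lambda}$ and $(1+R(\lambda))^{\kappa}=1+R(\lambda)$) finishes the proof.

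There is really no significant obstacle here: the constant $1/4$ appearing in \eqref{eq:FormCompare1} was evidently chosen precisely so that the absorption step $\tfrac{3}{4}Q(f)\le\cdots$ goes through. The only small point that needs explicit mention is that $Q$ and $Q_{\Lambda}$ take real values on the diagonal, so passing from the complex quantity $Q_{\lambda,\phi}(f)$ to its real part costs nothing. The case $\kappa\ge 2$, which is not treated here, would require iterating the twist by $\lambda(\phi)$ through a $\kappa$-th power and commuting $H_{\lambda,\phi}$ with itself in a way that controls cross terms in $\lambda$; that is the substantive content of Hypothesis \ref{hyp:kappa}, and it is exactly what evaporates when $\kappa=1$.
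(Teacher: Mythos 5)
Your proof is correct and follows essentially the same route as the paper's: both chain the lower G\r{a}rding bound $Q_{\Lambda}(f)\le 2Q(f)$ through the form comparison inequality \eqref{eq:FormCompare1} to reach $|\langle H_{\lambda,\phi}f,f\rangle|$, and both invoke Lemma \ref{formgeneratorlambdaphilem} for the domain inclusion and the identification $Q_{\lambda,\phi}(f)=\langle H_{\lambda,\phi}f,f\rangle$. You have simply written out the absorption step $\tfrac{3}{4}Q(f)\le\Re Q_{\lambda,\phi}(f)+\tfrac{M}{4}(1+R(\lambda))\|f\|_2^2$ in full, which the paper compresses into a single chained inequality.
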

\begin{proof}
The assertion that $\Dom(H_{\lambda,\phi})\subseteq \Dom(Q_{\Lambda})$ for all $\phi\in\mathcal{E}$ and $\lambda\in\mathbb{V}^*$ is a consequence of Lemma \ref{formgeneratorlambdaphilem}. Using \eqref{eq:Garding} and \eqref{eq:FormCompare1}, we have
\begin{eqnarray*}
Q_{\Lambda}(f)\leq 2Q(f)&\leq& C(\Re( Q_{\lambda,\phi}(f))+(1+R(\lambda))\|f\|_2^2)\\
&\leq& C(|Q_{\lambda,\phi}(f)|+(1+R(\lambda))\|f\|_2^2)
\end{eqnarray*}
for all $f\in \Dom(Q)$, $\phi\in\mathcal{E}$ and $\lambda\in\mathbb{V}^*$. In view of Lemma \ref{formgeneratorlambdaphilem}, the proof is complete. 
\end{proof}

\section{The $L^2$ theory}

We now return to the general theory. Throughout this section all hypotheses are to include Hypotheses \ref{hyp:Garding} and \ref{hyp:FormCompare} without explicit mention. With the exception of Lemma \ref{Hkappalemma}, all statements mirror those in \cite{Davies1995} and their proofs follow with little or no change. We will keep track of certain constants and to this end, any mention of $M>0$ refers to that which is specified in Hypothesis \ref{hyp:FormCompare}. Positive constants denoted by $C$ will change from line to line.

\begin{lemma}\label{twistedsgboundlemma} 
For any $\lambda\in\mathbb{V}^*$ and $\phi\in\mathcal{E}$,
\begin{equation*}
\|T_t^{\lambda,\phi}\|_{2\rightarrow 2}\leq \exp(M(1+R(\lambda))t/4)
\end{equation*}
for all $t>0$.
\end{lemma}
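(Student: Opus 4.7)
The plan is to turn the form lower bound from Lemma \ref{lowboundfortwistedformlem} into an $L^2$ semigroup bound by a standard energy argument on $\|T_t^{\lambda,\phi}f\|_2^2$.

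Fix $\lambda\in\mathbb{V}^*$ and $\phi\in\mathcal{E}$. First I would take $f\in\Dom(H_{\lambda,\phi})$ and set $u(t)=T_t^{\lambda,\phi}f$. Since $-H_{\lambda,\phi}$ generates the strongly continuous (indeed bounded holomorphic) semigroup $\{T_t^{\lambda,\phi}\}$, the orbit $t\mapsto u(t)$ is continuously differentiable with $u'(t)=-H_{\lambda,\phi}u(t)\in L^2(\Omega)$ and $u(t)\in\Dom(H_{\lambda,\phi})$ for $t>0$. Hence
\begin{equation*}
\frac{d}{dt}\|u(t)\|_2^2=2\Re\langle u'(t),u(t)\rangle=-2\Re\langle H_{\lambda,\phi}u(t),u(t)\rangle.
\end{equation*}

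Next I would feed the lower bound \eqref{Htwistedlemmaeq} from Lemma \ref{lowboundfortwistedformlem} into this identity to obtain
\begin{equation*}
\frac{d}{dt}\|u(t)\|_2^2\leq \frac{M}{2}(1+R(\lambda))\|u(t)\|_2^2.
\end{equation*}
Gronwall's inequality (or simply integrating $\frac{d}{dt}\log\|u(t)\|_2^2$ away from zeros and handling the zero set by continuity) then yields
\begin{equation*}
\|u(t)\|_2^2\leq \exp\!\bigl(M(1+R(\lambda))t/2\bigr)\,\|f\|_2^2,
\end{equation*}
and taking square roots gives the desired estimate on $\Dom(H_{\lambda,\phi})$.

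Finally I would extend by density: $\Dom(H_{\lambda,\phi})$ is dense in $L^2(\Omega)$ because $H_{\lambda,\phi}$ is the generator of a $C_0$-semigroup, so the same bound passes to every $f\in L^2(\Omega)$, which is exactly $\|T_t^{\lambda,\phi}\|_{2\to 2}\leq \exp(M(1+R(\lambda))t/4)$. There is no real obstacle here; the entire content of the lemma is packaged in Lemma \ref{lowboundfortwistedformlem}, and the only point requiring a word of care is justifying the differentiation of $\|u(t)\|_2^2$, which is available because the semigroup is analytic and maps $L^2(\Omega)$ into $\Dom(H_{\lambda,\phi})$ for $t>0$.
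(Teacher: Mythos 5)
Your proposal is correct and follows essentially the same route as the paper: apply Lemma \ref{lowboundfortwistedformlem} to bound $\frac{d}{dt}\|T_t^{\lambda,\phi}f\|_2^2$ and then invoke Gr\"{o}nwall. The additional care you take about differentiating the orbit (via analyticity) and the density extension are points the paper handles implicitly, but the argument is the same.
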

\begin{proof}
For $f\in L^2(\Omega)$, put $f_t=T_t^{\lambda,\phi}f$. By Lemma \ref{lowboundfortwistedformlem},
\begin{equation*}
\frac{d}{dt}\|f_t\|_2^2=-2\Re[(H_{\lambda,\phi}f_t,f_t)]\leq\frac{M}{2}(1+R(\lambda))\|f_t\|_2^2.
\end{equation*}
The result now follows from Gr\"{o}nwall's lemma.
\end{proof}

\begin{lemma}\label{twistedgenandsgboundlemma}
There exists $C>0$ such that
\begin{equation*}
\|H_{\lambda,\phi}T_t^{\lambda,\phi}\|_{2\rightarrow 2}\leq\frac{C}{t}\exp\left(\frac{M}{2}(1+R(\lambda))t\right)
\end{equation*}
for all $t>0$, $\lambda\in\mathbb{V}^*$ and $\phi\in\mathcal{E}$. 
\end{lemma}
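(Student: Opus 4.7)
The plan is to exploit the holomorphy of $\{T_t^{\lambda,\phi}\}$, together with the operator-valued Cauchy integral formula for the derivative of a holomorphic semigroup, to estimate $H_{\lambda,\phi}T_t^{\lambda,\phi} = -\frac{d}{dt}T_t^{\lambda,\phi}$.

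First, I would make the sectoriality of $Q_{\lambda,\phi}$ quantitative using Hypothesis \ref{hyp:FormCompare}. Since $Q(f)$ is real-valued (because $Q$ is symmetric on its real domain), the form-comparison inequality gives
\begin{equation*}
|\Im Q_{\lambda,\phi}(f)| = |\Im(Q_{\lambda,\phi}(f) - Q(f))| \leq \tfrac{1}{4}Q(f) + \tfrac{M}{4}(1+R(\lambda))\|f\|_2^2,
\end{equation*}
while $\Re Q_{\lambda,\phi}(f) \geq \tfrac{3}{4}Q(f) - \tfrac{M}{4}(1+R(\lambda))\|f\|_2^2$. Shifting $Q_{\lambda,\phi}$ by $M(1+R(\lambda))\|\cdot\|_2^2$ therefore yields a form sectorial of half-angle $\arctan(1/3)$ uniformly in $\lambda$ and $\phi$. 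Standard theory relating sectorial forms to holomorphic semigroups (Theorem 1.53 of \cite{Ouhabaz2009}) then provides a holomorphic extension of $T_t^{\lambda,\phi}$ to a family $T_z^{\lambda,\phi}$ on a sector $\Sigma_\theta := \{z \in \mathbb{C}\setminus\{0\} : |\arg z| < \theta\}$ with $\theta > 0$ independent of $\lambda$ and $\phi$, together with an exponential bound on $\|T_z^{\lambda,\phi}\|_{2\to 2}$ on $\Sigma_\theta$.

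Second, I would apply the operator-valued Cauchy integral formula. For any $0 < r < t\sin\theta$, the circle $|z-t|=r$ lies inside $\Sigma_\theta$, and
\begin{equation*}
-H_{\lambda,\phi}T_t^{\lambda,\phi} = \frac{d}{dt}T_t^{\lambda,\phi} = \frac{1}{2\pi i}\oint_{|z-t|=r}\frac{T_z^{\lambda,\phi}}{(z-t)^2}\,dz
\end{equation*}
as operators on $L^2(\Omega)$, giving
\begin{equation*}
\|H_{\lambda,\phi}T_t^{\lambda,\phi}\|_{2\to 2} \leq \frac{1}{r}\sup_{|z-t|=r}\|T_z^{\lambda,\phi}\|_{2\to 2}.
\end{equation*}
Taking $r = \eta t$ for a fixed small $\eta = \eta(\theta) > 0$ produces the desired prefactor $1/t$.

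The main obstacle, which determines the whole argument, is to obtain an exponential bound on $\|T_z^{\lambda,\phi}\|$ on this circle whose rate is close to the sharp real-axis rate $M(1+R(\lambda))/4$ from Lemma \ref{twistedsgboundlemma}, rather than the much cruder rate $M(1+R(\lambda))$ coming from a naive sector shift. The cleanest remedy is a Phragm\'en--Lindel\"of / three-lines interpolation applied to the holomorphic operator-valued function $z \mapsto e^{-cz}T_z^{\lambda,\phi}$ with $c = M(1+R(\lambda))/4$: this function is uniformly bounded on the positive real axis by Lemma \ref{twistedsgboundlemma} and of at most exponential growth on $\Sigma_\theta$ by the shift-and-sector argument, so on a slightly smaller sub-sector one recovers a bound with growth rate arbitrarily close to $M(1+R(\lambda))/4$. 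Combining this refined bound with the Cauchy estimate, choosing $r = \eta t$, and noting that $\Re z \leq t(1+\eta)$ on the circle, yields an exponent of the form $M(1+R(\lambda))(1+\eta)t/4$, which is at most $M(1+R(\lambda))t/2$ for $\eta \leq 1$. The $1/\eta$ factor from the Cauchy estimate is absorbed into the constant $C$.
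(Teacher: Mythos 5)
Your proposal is correct in outline but takes a genuinely different route from the paper. You first establish a crude sector bound with rate roughly $M(1+R(\lambda))$ (via the numerical-range/sectorial-form argument), then sharpen it by Phragm\'en--Lindel\"of interpolation against the real-axis bound of Lemma \ref{twistedsgboundlemma}, and finish with the operator-valued Cauchy integral formula. The paper instead obtains the sharp rate $M(1+R(\lambda))/4$ on the fixed sector $\{|\arg z|\leq\pi/3\}$ in one stroke via a differential inequality along rays: setting $f_r=e^{-re^{i\theta}H_{\lambda,\phi}}f$, one finds $\tfrac{d}{dr}\|f_r\|_2^2=-(e^{i\theta}+e^{-i\theta})Q(f_r)+D_r$ with $D_r$ controlled by the form comparison inequality; since $2\cos\theta\geq 1$ on that sector, the coercive $-Q(f_r)$ term absorbs the $Q(f_r)$ contribution from $D_r$, and Gr\"onwall yields $\|T_z^{\lambda,\phi}\|\leq e^{M(1+R(\lambda))|z|/4}$ uniformly on the sector. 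The paper then shifts $H_{\lambda,\phi}$ by $M(1+R(\lambda))/2$ so the shifted semigroup is a contraction on the sector and applies the standard holomorphic-semigroup bound $\|Ae^{-tA}\|_{2\to 2}\leq C/t$ (Theorem 8.4.6 of \cite{Davies1980}), which plays the role your Cauchy integral formula plays. Both routes work; the paper's is shorter and bypasses the interpolation entirely. The one soft spot in your writeup is the claim that Phragm\'en--Lindel\"of gives a rate ``arbitrarily close to $M(1+R(\lambda))/4$'' and that $\eta\leq 1$ then suffices: on any fixed sub-sector the interpolated rate is a definite constant strictly between $M(1+R(\lambda))/4$ and $M(1+R(\lambda))$, and the sub-sector half-angle, the choice of $\eta$ (which must be $< \sin$ of that half-angle so the Cauchy circle stays inside), and the resulting exponent must be coordinated; the bookkeeping works out, but it is more delicate than ``$\eta\leq 1$'' suggests, and it is exactly this delicacy that the paper's ray-by-ray Gr\"onwall argument avoids.
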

\begin{proof}
Our argument uses the theory of bounded holomorphic semigroups, c.f. \cite{Davies1980}. For $f\in L^2(\Omega)$, $r>0$ and $|\theta|\leq \pi/3$ put
\begin{equation*}
f_r=\exp[-re^{i\theta}H_{\lambda,\phi}]f.
\end{equation*}
It follows that $f_r\in\Dom(H_{\lambda,\phi})$ and
\begin{eqnarray*}
\frac{d}{dr}\|f_r\|_2^2&=&-e^{i\theta}(H_{\lambda,\phi}f_r,f_r)-e^{-i\theta}(f_r,H_{\lambda,\phi}f_r)\\
&=&-e^{i\theta}Q_{\lambda,\phi}(f_r)-e^{-i\theta}\overline{Q_{\lambda,\phi}(f_r)}\\
&=&-(e^{i\theta}+e^{-i\theta})Q(f_r)+D_r
\end{eqnarray*}
where
\begin{equation*}
D_r=-e^{i\theta}[Q_{\lambda,\phi}(f_r)-Q(f_r)]-e^{-i\theta}[\overline{Q_{\lambda,\phi}(f_r)}-Q(f_r)].
\end{equation*}
By Hypothesis \ref{hyp:FormCompare},
\begin{equation*}
|D_r|\leq (Q(f_r)+M(1+R(\lambda))\|f\|_2^2)/2
\end{equation*}
and so with the observation that $e^{i\theta}+e^{-i\theta}\geq 1$ for all $|\theta|\leq \pi/3$,
\begin{equation*}
\frac{d}{dr}\|f_r\|_2^2\leq \frac{M}{2}(1+R(\lambda))\|f\|_2^2.
\end{equation*}
Hence,
\begin{equation*}
\|f_r\|_2\leq \exp(M(1+R(\lambda))r/4)\|f\|_2
\end{equation*}
in view of Gr\"{o}nwall's lemma. From the above estimate we have
\begin{eqnarray*}
\lefteqn{\hspace{-1cm}\|\exp[-zH_{\lambda,\phi}-M(1+R(\lambda))z]\|_{2\rightarrow 2}}\\
&\leq&\exp(M(1+R(\lambda))r/4)\exp(-M(1+R(\lambda))\Re(z)/2)\leq 1
\end{eqnarray*}
for all $z=re^{i\theta}$ for $r>0$ and $|\theta|\leq \pi/3$ because $2\Re(z)\geq r$. Theorem 8.4.6 of \cite{Davies1980} yields
\begin{equation*}
\|(H_{\lambda,\phi}+M(1+R(\lambda))/2)\exp[-tH_{\lambda,\phi}-M(1+R(\lambda))t/2]\|_{2\rightarrow 2}\leq \frac{C'}{t}
\end{equation*}
for all $t>0$. It now follows that
\begin{equation*}
\|H_{\lambda,\phi}T_t^{\lambda,\phi}\|_{2\rightarrow 2}\leq\frac{C}{t}\exp(M(1+R(\lambda))t/2)
\end{equation*}
for all $t>0$ where we have put $C=C'+2$.
\end{proof}

\begin{lemma}\label{Hkappalemma}
For any $k\in\mathbb{N}$, there is $C> 0$ such that
\begin{equation*}
\|H_{\lambda,\phi}^ke^{-tH_{\lambda,\phi}}\|_{2\rightarrow 2}\leq \frac{C}{t^k}\exp(M(1+R(\lambda))t/2)
\end{equation*}
for all $t>0$, $\phi\in\mathcal{E}$ and $\lambda\in \mathbb{V}^*$.
\end{lemma}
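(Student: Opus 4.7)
The plan is to bootstrap from Lemma \ref{twistedgenandsgboundlemma} (which is precisely the $k=1$ case) by iterating, using the commutation of $H_{\lambda,\phi}$ with its own semigroup together with the semigroup property. The $k=0$ case is immediate from Lemma \ref{twistedsgboundlemma}, since $M/4 \leq M/2$.

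For $k \geq 1$, the first step is to justify the operator identity
\begin{equation*}
H_{\lambda,\phi}^{k} e^{-tH_{\lambda,\phi}} = \bigl(H_{\lambda,\phi}\, e^{-(t/k)H_{\lambda,\phi}}\bigr)^{k}
\end{equation*}
on all of $L^{2}(\Omega)$. Because $\{T_{t}^{\lambda,\phi}\}$ is a bounded holomorphic semigroup on a nontrivial sector (noted just after the definition of the twisted semigroup), a standard argument shows that for every $s>0$ the range of $e^{-sH_{\lambda,\phi}}$ lies in $\bigcap_{j\geq 0} \Dom(H_{\lambda,\phi}^{j})$, and on each such domain the generator commutes with the semigroup. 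Combined with the semigroup identity $e^{-tH_{\lambda,\phi}}=\bigl(e^{-(t/k)H_{\lambda,\phi}}\bigr)^{k}$ and a telescoping argument, this yields the displayed factorization.

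The second step is to bound each of the $k$ factors using Lemma \ref{twistedgenandsgboundlemma} applied at time $t/k$:
\begin{equation*}
\bigl\|H_{\lambda,\phi}\, e^{-(t/k)H_{\lambda,\phi}}\bigr\|_{2\to 2} \leq \frac{Ck}{t}\exp\!\left(\frac{M(1+R(\lambda))}{2}\cdot\frac{t}{k}\right),
\end{equation*}
with a constant $C$ independent of $t,\lambda,\phi$. Multiplying the $k$ bounds, the exponential factors combine to $\exp(M(1+R(\lambda))t/2)$ and the prefactors yield $(Ck)^{k}/t^{k}$. Absorbing the constant $(Ck)^{k}$ into a new $k$-dependent constant produces the claimed estimate.

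The only nontrivial point is the domain/commutation step justifying the operator identity; once it is in place, the bound is purely a matter of combining the $k=1$ estimate with itself. I expect no genuine obstacle since this is the standard argument used to derive Kato-type smoothing estimates from the first-order smoothing bound produced by the holomorphy of the semigroup.
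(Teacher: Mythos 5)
Your proof takes essentially the same route as the paper: it factors $H_{\lambda,\phi}^{k}e^{-tH_{\lambda,\phi}}$ as $\bigl(H_{\lambda,\phi}e^{-(t/k)H_{\lambda,\phi}}\bigr)^{k}$ and applies Lemma \ref{twistedgenandsgboundlemma} to each factor, absorbing the $k$-dependent prefactor into $C$. You are slightly more explicit than the paper about justifying the range and commutation facts via holomorphy of the semigroup, but the argument is the same.
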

\begin{proof}
As $-H_{\lambda,\phi}$ is the generator of the semigroup $e^{-tH_{\lambda,\phi}}$, for any $t>0$ and $f\in L^2(\Omega)$, $e^{-tH_{\lambda,\phi}}f\in \Dom(H_{\lambda,\phi}^k)$. We have
\begin{equation*}
H_{\lambda,\phi}^ke^{-tH_{\lambda,\phi}}=\left(H_{\lambda,\phi}e^{-(t/k)H_{\lambda,\phi}}\right)^k
\end{equation*}
and so by the previous lemma
\begin{equation*}
\|H_{\lambda,\phi}^ke^{-tH_{\lambda,\phi}}\|_{2\rightarrow 2}\leq \left(\frac{C}{t}\exp(M(1+R(\lambda)t/2k)\right)^k
\end{equation*}
from which the result follows.
\end{proof}

\section{Off-diagonal estimates}

\noindent In this section, we prove that the semigroup $T_t=e^{-tH}$ has an integral kernel $K_H$ and we deduce off-diagonal estimates for $K_H$. Here we shall assume the notation of the last section and, like before, all statements are to include Hypotheses \ref{hyp:Garding} and \ref{hyp:FormCompare} without explicit mention. 

\begin{lemma}\label{offdiagonalmachinelem}
If the twisted semigroup $T^{\lambda,\phi}_t$ satisfies the ultracontractive estimate
 \begin{equation}\label{twistedultracontraceq}
 \|T^{\lambda,\phi}_t\|_{2\rightarrow \infty}\leq \frac{C}{t^{\mu_{\Lambda}/2}}\exp[M(R(\lambda)+1)t/2]
 \end{equation}
for all $\lambda\in \mathbb{V}^*$, $\phi\in \mathcal{E}$ and $t>0$ where $C,M>0$, then $T_t$ has integral kernel $K_H(t,x,y)=K_H(t,x,\cdot)\in L^1(\Omega)$ satisfying the off-diagonal bound
\begin{equation*}
|K_H(t,x,y)|\leq \frac{C}{t^{\mu_{\Lambda}}}\exp\left(-tMR^{\#}\left(\frac{x-y}{t}\right)+Mt\right)
\end{equation*}
for all $x,y\in\mathbb{V}$ and $t>0$ where $R^{\#}$ is the Legendre-Fenchel transform of $R$ and $M$ and $C$ are positive constants.
\end{lemma}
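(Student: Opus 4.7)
The plan is to run the standard Davies perturbation scheme, adapted to the multi-parameter setting of this paper: first pass from the twisted $L^2\to L^\infty$ estimate to an $L^1\to L^\infty$ estimate via duality and the semigroup property, then extract an integral kernel for the twisted semigroup and relate it to the kernel of $T_t$ via the conjugation, and finally optimize over $\lambda\in\mathbb{V}^*$ using the Legendre--Fenchel transform. For the first step I would use the identity $(T_t^{\lambda,\phi})^*=T_t^{-\lambda,\phi}$, which holds because $T_t$ is self-adjoint and $e^{\pm\lambda(\phi)}$ is real-valued and bounded; dualizing \eqref{twistedultracontraceq} applied with $-\lambda$ in place of $\lambda$ yields a matching $L^1\to L^2$ bound, and composing through $T_t^{\lambda,\phi}=T_{t/2}^{\lambda,\phi}\circ T_{t/2}^{\lambda,\phi}$ produces
\[
\bigl\|T_t^{\lambda,\phi}\bigr\|_{1\to\infty} \;\le\; \frac{C'}{t^{\mu_\Lambda}}\exp\!\bigl[M'\bigl(R(\lambda)+R(-\lambda)+2\bigr)t\bigr]
\]
for new constants $C',M'>0$ and all $t>0$, $\lambda\in\mathbb{V}^*$, $\phi\in\mathcal{E}$.

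Next, a bounded operator from $L^1$ to $L^\infty$ is integration against a kernel in $L^\infty(\Omega\times\Omega)$ whose sup norm coincides with the operator norm, so $T_t^{\lambda,\phi}$ has a kernel $K_H^{\lambda,\phi}(t,\cdot,\cdot)$ obeying the preceding bound almost everywhere. From the definition of the twisted semigroup I would read off the pointwise identity
\[
K_H^{\lambda,\phi}(t,x,y) \;=\; e^{\lambda(\phi(x)-\phi(y))}\,K_H(t,x,y),
\]
which both supplies the integral kernel $K_H$ asserted in the lemma and yields
\[
|K_H(t,x,y)| \;\le\; \frac{C'}{t^{\mu_\Lambda}}\exp\!\bigl[M'\bigl(R(\lambda)+R(-\lambda)+2\bigr)t\bigr]\,e^{-\lambda(\phi(x)-\phi(y))}
\]
for almost every $(x,y)$ and every admissible $\lambda$ and $\phi$.

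For the final step I would fix $x,y\in\Omega$, use Hypothesis \ref{hyp:FormCompare}(i) to choose $\phi\in\mathcal{E}$ with $\phi(x)-\phi(y)=x-y$, and take the infimum over $\lambda\in\mathbb{V}^*$ of the resulting expression
\[
\frac{C'}{t^{\mu_\Lambda}}\,e^{2M't}\exp\!\bigl[M'\bigl(R(\lambda)+R(-\lambda)\bigr)t-\lambda(x-y)\bigr].
\]
Invoking the definition $R^\#(z)=\sup_\lambda[\lambda(z)-R(\lambda)]$ together with the identity $\inf_\lambda[aR(\lambda)-\lambda(z)]=-aR^\#(z/a)$ and the positive-homogeneity of $R^\#$ dual to that of $R$ (Appendix \ref{Appendix:LF}), this infimum rewrites as $-tMR^\#((x-y)/t)+Mt$ once constants are relabeled. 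The $R(-\lambda)$ contribution would be absorbed by working throughout with the even polynomial $\tfrac{1}{2}(R(\lambda)+R(-\lambda))$, whose Legendre--Fenchel transform controls and is controlled by $R^\#$ via the homogeneity of $R$, delivering the asserted estimate.

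The main obstacle is concentrated in this last step. Everything up to the optimization is essentially formal once one has the adjoint identity $(T_t^{\lambda,\phi})^*=T_t^{-\lambda,\phi}$ and the usual kernel characterization of $L^1\to L^\infty$ operators. The delicate points are to track how the positive-homogeneity of $R$ (and hence of $R^\#$) interacts with the scaling $t\mapsto M't$ produced by the optimization, and to handle the $R(-\lambda)$ term that appears because duality flips the sign of $\lambda$; both are routine but they are where the bookkeeping must be done carefully.
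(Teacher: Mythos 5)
Your proposal is correct and follows essentially the same route as the paper: duality via $(T_t^{\lambda,\phi})^*=T_t^{-\lambda,\phi}$ to get $L^1\to L^2$, composition to reach $L^1\to L^\infty$, the standard $L^1\to L^\infty\Rightarrow$ bounded-kernel fact, the conjugation identity relating $K_H^{\lambda,\phi}$ to $K_H$, choice of $\phi$ via Hypothesis \ref{hyp:FormCompare}(i), and optimization in $\lambda$ producing $R^{\#}$ (the paper cites Proposition \ref{prop:ComparePoly} to replace $R(-\lambda)$ by $R(\lambda)$ immediately, and Corollary \ref{cor:MovingConstants} to pass from $(MR)^{\#}$ to $MR^{\#}$, rather than your symmetrization $\tfrac{1}{2}(R(\lambda)+R(-\lambda))$, but these are interchangeable bookkeeping choices). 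Your version is slightly more careful in writing $T_t^{\lambda,\phi}=T_{t/2}^{\lambda,\phi}\circ T_{t/2}^{\lambda,\phi}$ for the composition step and in the sign of the kernel conjugation identity, but the argument is the same.
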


\begin{proof}
It is clear that the adjoint of $T_t^{\lambda,\phi}$ is $T_t^{-\lambda,\phi}$ and so by duality and \eqref{twistedultracontraceq},
\begin{equation*}
 \|T^{\lambda,\phi}_t\|_{1\rightarrow 2}\leq \frac{C}{t^{\mu_{\Lambda}/2}}\exp[M(R(\lambda)+1)t/2]
\end{equation*}
for $t>0$ where we have replaced $MR(-\lambda)$ by $MR(\lambda)$ in view of Proposition \ref{prop:ComparePoly}. Thus for all $t>0$, $\lambda\in\mathbb{V}^*$ and $\phi\in\mathcal{E}$,
\begin{eqnarray*}
\|T^{\lambda,\phi}_t\|_{1\rightarrow \infty}&\leq&\|T^{\lambda,\phi}_t\|_{1\rightarrow 2}\|T^{\lambda,\phi}_t\|_{2\rightarrow \infty}\\
&\leq&\frac{C}{t^{\mu_{\Lambda}/2}}\exp[M(R(\lambda)+1)t/2]\frac{C}{t^{\mu_{\Lambda}/2}}\exp[M(R(\lambda)+1)t/2]\\
&\leq&\frac{C}{t^{\mu_{\Lambda}}}\exp[Mt(R(\lambda)+1)].
\end{eqnarray*}
The above estimate guarantees that $T_t^{\lambda,\phi}$ has integral kernel $K_H^{\lambda,\phi}(t,x,y)$ satisfying the same bound (see Theorem 2.27 of \cite{Davies1980}). By construction, we also have 
\begin{equation*}
K_H^{\lambda,\phi}(t,x,y)=e^{-\lambda(\phi(x))}K_H(t,x,y)e^{\lambda(\phi(y))} 
\end{equation*}
 where $K_H=K_H^{0,\phi}$ is the integral kernel of $T_t=T_t^{0,\phi}$. Therefore
\begin{equation*}
|e^{-\lambda(\phi(x))}K_H(t,x,y)e^{\lambda(\phi(y))}|\leq\frac{C}{t^{\mu_{\Lambda}}}\exp(Mt(R(\lambda)+1))
\end{equation*}
or equivalently
\begin{equation*}
|K_H(t,x,y)|\leq\frac{C}{t^{\mu_{\Lambda}}}\exp\left(\lambda(\phi(y)-\phi(x))+Mt(R(\lambda)+1)\right)
\end{equation*}
for all $t>0$, $x,y\in\Omega$, $\lambda\in\mathbb{V}^*$ and $\phi\in\mathcal{E}$. In view of Hypothesis \ref{hyp:FormCompare}, for any $x$ and $y\in\Omega$ there is $\phi\in\mathcal{E}$ for which $\phi(x)=x$ and $\phi(y)=y$. Consequently, we have that for all $x,y\in\Omega$, $\lambda\in\mathbb{V}^*$ and $t>0$,
\begin{equation*}
|K_H(t,x,y|\leq\frac{C}{t^{\mu_{\Lambda}}}\exp\left(\lambda(y-x)+Mt(R(\lambda)+1)\right).
\end{equation*}
The proof of the lemma will be complete upon minimizing the above bound with respect to $\lambda\in\mathbb{V}^*$. In this process, we shall see how the Legendre-Fenchel transform appears naturally. For any $x,y\in\Omega$ and $t>0$, we have
\begin{eqnarray*}
 |K_H(t,x,y)|&\leq& \frac{C}{t^{\mu_{\Lambda}}}\inf_{\lambda}\{ \exp\left\{\lambda(y-x)+Mt(R(\lambda)+1)\right\}\}\\
 &\leq&\frac{C}{t^{\mu_{\Lambda}}}\exp\left(-t\sup_{\lambda}\left\{\lambda\left(\frac{x-y}{t}\right)-MR(\lambda)\right\}\right)\exp(Mt)\\
 &\leq&\frac{C}{t^{\mu_{\Lambda}}}\exp\left(-t(MR)^{\#}\left(\frac{x-y}{t}\right)+Mt\right)\\
  &\leq&\frac{C}{t^{\mu_{\Lambda}}}\exp\left(-tMR^{\#}\left(\frac{x-y}{t}\right)+Mt\right)
\end{eqnarray*}
where we replaced $(MR)^{\#}$ by $MR^{\#}$ in view of Corollary \ref{cor:MovingConstants}
\end{proof}

\begin{theorem}\label{thm:Main}
Let $Q$ satisfy Hypotheses \ref{hyp:Garding}, \ref{hyp:FormCompare} and \ref{hyp:kappa} with reference operator $\Lambda$ and associated self-adjoint operator $H$. Let $R$ be the symbol of $\Lambda$ and $\mu_{\Lambda}$ be its homogeneous order. Then the semigroup $T_t=e^{-tH}$ has integral kernel $K_H:(0,\infty)\times \Omega\times \Omega\rightarrow \mathbb{C}$ satisfying
\begin{equation}\label{eq:Main}
|K_H(t,x,y)|\leq \frac{C}{t^{\mu_{\Lambda}}}\exp\left(-tMR^{\#}\left(\frac{x-y}{t}\right)+Mt\right)
\end{equation}
for all $x,y\in\Omega$ and $t>0$ where $R^{\#}$ is the Legendre-Fenchel transform of $R$ and $C$ and $M$ are positive constants. 
\end{theorem}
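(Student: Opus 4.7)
The plan is to reduce the theorem to the twisted ultracontractive estimate of Lemma \ref{offdiagonalmachinelem}. Once I can establish
\[
\|T_t^{\lambda,\phi}\|_{2\to\infty}\leq \frac{C}{t^{\mu_\Lambda/2}}\exp\bigl(M(1+R(\lambda))t/2\bigr)
\]
uniformly in $\lambda\in\mathbb{V}^*$ and $\phi\in\mathcal{E}$, the off-diagonal bound \eqref{eq:Main} follows directly from that lemma (the Legendre--Fenchel transform appearing in the course of its proof via optimization in $\lambda$, as in the $L^2$--$L^\infty$ bound there). The whole game is therefore to produce this twisted $L^2 \to L^\infty$ estimate, and here Hypothesis \ref{hyp:kappa} is the essential new ingredient required when $\mu_\Lambda \geq 1$.

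The first step is to observe that $\Lambda^\kappa$ is itself a positive-homogeneous operator with symbol $R^\kappa$ and homogeneous order $\mu_\Lambda/\kappa$. Indeed, if $E\in\Exp(\Lambda)$, iterating $\delta_{1/t}^E\circ\Lambda\circ\delta_t^E=t\Lambda$ gives $\delta_{1/t}^{E/\kappa}\circ\Lambda^\kappa\circ\delta_t^{E/\kappa}=t\Lambda^\kappa$, so $E/\kappa\in\Exp(\Lambda^\kappa)$, whence $\mu_{\Lambda^\kappa}=\tr(E/\kappa)=\mu_\Lambda/\kappa$. By the choice of $\kappa$ this last quantity is $<1$, so Lemma \ref{nashlikelem} applies to $\Lambda^\kappa$ and yields
\[
\|g\|_{L^\infty(\Omega)}\leq C\,Q_{\Lambda^\kappa}(g)^{\mu_\Lambda/(2\kappa)}\,\|g\|_{L^2(\Omega)}^{1-\mu_\Lambda/\kappa}
\]
for every $g\in\Dom(Q_{\Lambda^\kappa})$.

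Next, I would fix $f\in L^2(\Omega)$, set $f_t=T_t^{\lambda,\phi}f$, and apply the displayed Gagliardo--Nirenberg estimate with $g=f_t$. Since $T_t^{\lambda,\phi}$ is a bounded holomorphic semigroup, $f_t\in\Dom(H_{\lambda,\phi}^\kappa)$ for every $t>0$, so Hypothesis \ref{hyp:kappa} gives $f_t\in\Dom(Q_{\Lambda^\kappa})$ together with
\[
Q_{\Lambda^\kappa}(f_t)\leq C\bigl(|\langle H_{\lambda,\phi}^\kappa f_t,f_t\rangle|+(1+R(\lambda))^\kappa\|f_t\|_2^2\bigr).
\]
The Cauchy--Schwarz inequality, Lemma \ref{twistedsgboundlemma} and Lemma \ref{Hkappalemma} control the first term via
\[
|\langle H_{\lambda,\phi}^\kappa f_t,f_t\rangle|\leq \|H_{\lambda,\phi}^\kappa e^{-tH_{\lambda,\phi}}\|_{2\to 2}\,\|T_t^{\lambda,\phi}\|_{2\to 2}\,\|f\|_2^2\leq \frac{C}{t^\kappa}\,e^{M(1+R(\lambda))t}\,\|f\|_2^2,
\]
while the second term is absorbed using $s^\kappa e^{-Mst/2}\leq C_\kappa t^{-\kappa}$ with $s=1+R(\lambda)$, so that $(1+R(\lambda))^\kappa\|f_t\|_2^2$ obeys the same bound. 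Raising the sum to the power $\mu_\Lambda/(2\kappa)$, and combining with Lemma \ref{twistedsgboundlemma} applied to the factor $\|f_t\|_2^{1-\mu_\Lambda/\kappa}$, gives exactly
\[
\|f_t\|_\infty \leq \frac{C}{t^{\mu_\Lambda/2}}\exp\bigl(M'(1+R(\lambda))t\bigr)\|f\|_2.
\]

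The main obstacle is the careful bookkeeping of the exponential factor $\exp(M(1+R(\lambda))t)$ through each of the above steps: the constant $M$ shifts at every use of Lemma \ref{twistedsgboundlemma} and Lemma \ref{Hkappalemma}, and the polynomial factors $(1+R(\lambda))^\kappa$ arising in Hypothesis \ref{hyp:kappa} must be absorbed into an exponential of the same form (rather than a larger one) so that the hypotheses of Lemma \ref{offdiagonalmachinelem} are met verbatim. This is precisely what allows the Legendre--Fenchel transform $R^{\#}$ to emerge cleanly in the final $\lambda$-optimization performed inside that lemma, completing the proof.
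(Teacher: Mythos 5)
Your proposal is correct and follows essentially the same route as the paper's proof: apply the Gagliardo--Nirenberg inequality (Lemma \ref{nashlikelem}) to $\Lambda^\kappa$, use Hypothesis \ref{hyp:kappa} to pass to $H_{\lambda,\phi}^\kappa$, control the resulting terms with Lemmas \ref{twistedsgboundlemma} and \ref{Hkappalemma}, absorb the polynomial $(1+R(\lambda))^\kappa$ into the exponential, and conclude by invoking Lemma \ref{offdiagonalmachinelem}. Your derivation that $E/\kappa\in\Exp(\Lambda^\kappa)$ (and hence $\mu_{\Lambda^\kappa}=\mu_\Lambda/\kappa$) is the correct reading of the rescaling, and your explicit handling of the $(1+R(\lambda))^\kappa$ absorption matches what the paper performs implicitly.
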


\begin{proof}
Take $\kappa$ as in Hypothesis \ref{hyp:kappa}. We note that for all $f\in \Dom(\Lambda^{\kappa})$,
\begin{equation*}
\|f\|_{\infty}\leq CQ_{\Lambda^{\kappa}}(f)^{\mu_{\Lambda}/2\kappa}\|f\|_2^{1-\mu_{\Lambda}/\kappa}
\end{equation*}
in view of Lemma \ref{nashlikelem}. The application of the lemma is justified because $\Lambda^{\kappa}$ is positive-homogeneous with $\kappa^{-1}\Exp(\Lambda^{\kappa})=\Exp(\Lambda)$ and, as required, $\mu_{\Lambda^{\kappa}}=\mu_{\Lambda}/\kappa<1$. For $f\in L^2(\Omega)$, set $f_t=T_t^{\lambda,\phi}f$. In view of Hypothesis \ref{hyp:kappa} and Lemmas \ref{twistedsgboundlemma} and \ref{twistedgenandsgboundlemma}, we have
\begin{eqnarray*}
\|f_t\|_{\infty}&\leq& Q_{\Lambda^{\kappa}}(f_t)^{\mu_{\Lambda}/2\kappa}\|f_t\|_2^{1-\mu_{\Lambda}/\kappa}\\
&\leq&C\left(|\langle H_{\lambda,\phi}^{\kappa}f_t,f_t\rangle|+(1+R(\lambda))^{\kappa}\|f_t\|_2^2\right)^{\mu_{\Lambda}/2\kappa}\|f_t\|_2^{1-\mu_{\Lambda}/\kappa}\\
&\leq&C\left(\|H_{\lambda,\phi}^{\kappa}f_t\|_2\|f_t\|_2+(1+R(\lambda))^{\kappa}\|f_t\|_2^2\right)^{\mu_{\Lambda}/2\kappa}\|f_t\|_2^{1-\mu_{\Lambda}/\kappa}\\
&\leq&C\left(\frac{\exp(M(1+R(\lambda))t/4)}{t^\kappa}+(1+R(\lambda))^k\right)^{\mu_{\Lambda}/2\kappa}\\
& &\hspace{2cm}\times\exp(M(1+R(\lambda))t/4)\|f\|_2\\
&\leq&\frac{C}{t^{\mu_{\Lambda}/2}}\exp(M(1+R(\lambda))t/2)\|f\|_2
\end{eqnarray*}
for all $\phi\in\mathcal{E}$ and $\lambda\in\mathbb{V}^*$. In view of Lemma \ref{offdiagonalmachinelem}, the theorem is proved.
\end{proof}

\section{Homogeneous Operators}\label{sec:HHomogeneous}

In this short section, we show that the term $Mt$ in the heat kernel estimate of Theorem \ref{thm:Main} can be removed when $H$, a generally variable-coefficient operator, is ``homogeneous" in the sense given by Definition \ref{def:HHomogeneousOperator} below. Our setting is that in which $\Omega=\mathbb{V}$ and we shall assume throughout this section that $\mu_{\Lambda}<1$. Our arguments follow closely to the work of G. Barbatis and E. B. Davies \cite{Barbatis1996}. \\

\noindent Let $Q$ be a sesquilinear form on $L^2(\mathbb{V})$ satisfying Hypotheses \ref{hyp:Garding} and \ref{hyp:FormCompare} with reference operator $\Lambda$ and associated self-adjoint operator $H$. For any $E\in\Exp(\Lambda)$ (which we keep fixed throughout this section), observe that
\begin{equation*}
(U_sf)(x)=s^{\mu_{\Lambda}/2}f(s^Ex)
\end{equation*}
defines a unitary operator $U_s$ on $L^2(\mathbb{V})$ for each $s>0$ with $U_s^{*}=U_{1/s}$. For each $s>0$, set
\begin{equation*}
H_s=s^{-1}U_{s}^{*}H U_{s}.
\end{equation*}
and note that $H_s$ is a self-adjoint operator on $L^2(\mathbb{V})$. It is easily verified that the sesquilinear form $Q^s$ associated to $H_s$ has
\begin{equation*}
Q^s(f,g)=s^{-1} Q(U_s f,U_s g)
\end{equation*}
for all $f,g$ in the common domain $\Dom(Q^s)=\Dom(Q)=\Dom(\Lambda^{1/2})$. As $Q^s$ is produced by rescaling $Q$, it is clear the $Q^s$ will satisfy Hypotheses \ref{hyp:Garding} and \ref{hyp:FormCompare}. Let us isolate the following special situation:
\begin{definition}\label{def:HHomogeneousOperator}
Assuming the notation above, we say that $H$ is homogeneous provided that $Q^s$ satisfies Hypotheses \ref{hyp:Garding} and \ref{hyp:FormCompare} with the same constants as $Q$ for all $s>0$. In other words, $Q_s$ satisfies the estimates \eqref{eq:Garding} and \eqref{eq:FormCompare1} uniformly for $s>0$.
\end{definition}

\noindent We note that a positive-homogeneous operator $\Lambda$ is homogeneous in the above sense, for our defining property of homogeneous constant-coefficient operators can be written equivalently as $\Lambda_s=\Lambda$ for all $s>0$. In the example section below, we will see that when $H$ is a variable-coefficient partial differential operator consisting only of ``principal terms", the replacement of $H_s$ by $H$ amounts to a rescaling of the arguments of $H$'s coefficients.

\begin{theorem}\label{thm:HHomogeneous}
Let $Q$ be a sesquilinear form on $L^2(\mathbb{V})$ satisfying Hypotheses \ref{hyp:Garding} and \ref{hyp:FormCompare} with reference operator $\Lambda$ and associated self-adjoint operator $H$. Let $R$ and $\mu_{\Lambda}$ be the symbol and homogeneous order of $\Lambda$, respectively. Assume further that $\mu_{\Lambda}<1$ and so Hypothesis \ref{hyp:kappa} is automatically satisfied (in view of Proposition \ref{prop:kappa}) and hence the conclusion to Theorem \ref{thm:Main} is valid. If $H$ is homogeneous, then its heat kernel $K_{H}$ satisfies the estimate
\begin{equation*}
|K_H(t,x,y)|\leq \frac{C}{t^{\mu_{\Lambda}}}\exp\left(-tMR^{\#}\left(\frac{x-y}{t}\right)\right)
\end{equation*}
for all $x,y\in\mathbb{V}$ and $t>0$, where $C$ and $M$ are positive constants.
\end{theorem}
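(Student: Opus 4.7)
The plan is to follow the scaling argument of Barbatis and Davies \cite{Barbatis1996}. The homogeneity hypothesis ensures that, for each $s>0$, Theorem~\ref{thm:Main} applies to the rescaled operator $H_s$ with \emph{the same} constants $C$ and $M$ as for $H$. Exploiting this uniformity together with a kernel scaling identity will produce a one-parameter family of bounds for $K_H$; letting $s\to 0^{+}$ will then eliminate the unwanted $Mt$ term.

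First I would establish the kernel scaling identity. Since $U_s$ is unitary and $H_s=s^{-1}U_s^{*}HU_s$, the Borel functional calculus gives $e^{-tH_s}=U_s^{*}e^{-(t/s)H}U_s$, and a direct computation using $(U_sf)(x)=s^{\mu_{\Lambda}/2}f(s^{E}x)$ together with the change of variable $y'=s^{E}y$ shows that
\begin{equation*}
K_{H_s}(t,x,y)=s^{-\mu_{\Lambda}}K_H\!\left(t/s,\,s^{-E}x,\,s^{-E}y\right)
\end{equation*}
for all $t>0$ and $x,y\in\mathbb{V}$. Applying Theorem~\ref{thm:Main} to $H_s$ then yields
\begin{equation*}
s^{-\mu_{\Lambda}}\bigl|K_H(t/s,\,s^{-E}x,\,s^{-E}y)\bigr|\leq \frac{C}{t^{\mu_{\Lambda}}}\exp\!\left(-tMR^{\#}\!\left(\frac{x-y}{t}\right)+Mt\right),
\end{equation*}
with $C$ and $M$ independent of $s$.

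The key algebraic ingredient is the scaling law
\begin{equation*}
R^{\#}(s^{1-E}u)=sR^{\#}(u),\qquad s>0,\ u\in\mathbb{V},
\end{equation*}
which follows immediately from the defining homogeneity $R(s^{E^{*}}\xi)=sR(\xi)$ of the symbol through the substitution $\eta=s^{-E^{*}}\xi$ inside the Legendre--Fenchel supremum. Setting $\tau=t/s$, $x'=s^{-E}x$, $y'=s^{-E}y$ and noting that $(x-y)/t=s^{E-1}(x'-y')/\tau$, this scaling law converts the exponent $-tMR^{\#}((x-y)/t)$ into $-\tau M R^{\#}((x'-y')/\tau)$, while the prefactor $s^{\mu_{\Lambda}}/t^{\mu_{\Lambda}}$ collapses to $1/\tau^{\mu_{\Lambda}}$. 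The displayed bound therefore becomes
\begin{equation*}
|K_H(\tau,x',y')|\leq \frac{C}{\tau^{\mu_{\Lambda}}}\exp\!\left(-\tau M R^{\#}\!\left(\frac{x'-y'}{\tau}\right)+M s\tau\right),
\end{equation*}
which holds for \emph{every} $s>0$. Sending $s\to 0^{+}$ kills the $Ms\tau$ term and yields the theorem.

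The only real difficulty is bookkeeping: one must carefully track the anisotropic dilations $s^{E}$ and $s^{E^{*}}$, verify the $R^{\#}$ scaling law precisely, and ensure that the Jacobian from the change of variables in the kernel produces exactly the factor $s^{-\mu_{\Lambda}}$ needed to cancel the $s^{\mu_{\Lambda}}$ coming from the prefactor. Once these dilation identities are in hand, the limit $s\to 0^{+}$ is immediate and the $Mt$ term disappears, exactly as desired.
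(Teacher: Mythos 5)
Your proof is correct and takes essentially the same approach as the paper: establish the kernel scaling identity $K_{H_s}(t,x,y)=s^{-\mu_\Lambda}K_H(t/s,s^{-E}x,s^{-E}y)$, invoke Theorem \ref{thm:Main} for $H_s$ with constants uniform in $s$ by homogeneity, use $I-E\in\Exp(R^{\#})$ to carry out the change of variables, and let $s\to 0$. The paper inverts the kernel relation before applying the bound while you apply the bound first and then change variables, but these are the same computation.
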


\begin{proof}
Using the fact that $U_s$ is unitary for each $s>0$, it follows that
\begin{equation*}
e^{-tH_s}=e^{-ts^{-1}U_{1/s}HU_s}=U_{1/s}e^{-(t/s)H}U_s
\end{equation*}
for $s,t>0$. Consequently, for $f\in L^2(\mathbb{V})$,
\begin{eqnarray*}
\left(e^{-tH_s}f\right)(x)&=&\int_{\mathbb{V}}s^{-\mu_{\Lambda}}K_H(t/s,s^{-E}x,y)s^{\mu_{\Lambda}}f(s^{E}y)\,dy\\
&=&s^{-\mu_{\Lambda}}\int_{\mathbb{V}}K_H(t/s,s^{-E}x,s^{-E}y)f(y)\,dy
\end{eqnarray*}
for $s,t>0$ and almost every $x\in\mathbb{V}$. Thus, $e^{-tH_s}$ has an integral kernel $K_H^s:(0,\infty)\times\mathbb{V}\times\mathbb{V}\rightarrow\mathbb{C}$ satisfying
\begin{equation*}
K_H^s(t,x,y)=s^{-\mu_{\Lambda}}K_H(t/s,s^{-E}x,s^{-E}y)
\end{equation*}
for $x,y\in\mathbb{V}$. Equivalently,
\begin{equation*}
K_H(t,x,y)=s^{\mu_{\Lambda}}K_H^s(st,s^{E}x,s^{E}y)
\end{equation*}
for $t,s>0$ and $x,y\in\mathbb{V}$. We now apply the same sequence of arguments to the self-adjoint operators $H_s$ and the semigroups $e^{-tH_s}$. Under the hypothesis that $H$ is homogeneous, a careful study reveals that each estimate in the sequence of lemmas preceding Theorem \ref{thm:Main} and the estimates in the proof of Theorem \ref{thm:Main} are independent of $s$. From this, we obtain positive constants $C$ and $M$ for which
\begin{equation*}
|K_H^s(t,x,y)|\leq \frac{C}{t^{\mu_{\Lambda}}}\exp\left(-tMR^{\#}\left(\frac{x-y}{t}\right)+Mt\right)
\end{equation*}
for all $t>0$ and $x,y\in\mathbb{V}$ and this holds uniformly for $s>0$. Consequently,
\begin{eqnarray*}
|K_H(t,x,y)|&\leq &s^{\mu_{\Lambda}}\frac{C}{(st)^{\mu_{\Lambda}}}\exp\left(-(st)MR^{\#}\left(\frac{s^{E}(x-y)}{st}\right)+Mst\right)\\
&\leq & \frac{C}{t^{\mu_{\Lambda}}}\exp\left(-tMR^{\#}\left(\frac{x-y}{t}\right)+Mst\right)
\end{eqnarray*}
for all $s,t>0$ and$x,y\in\mathbb{V}$ where we have used the fact that $I-E\in\Exp(R^{\#})$. The desired estimate follows by letting $s\rightarrow 0$.
\end{proof}

\section{Regularity of $K_H$}\label{sec:KernelRegularity}

In this section, we discuss the regularity of the heat kernel $K_H$. Given a non-empty open subset $\Omega$ of $\mathbb{V}$, we assume that $Q$ is a sesquilinear form on $L^2(\Omega)$ which satisfies Hypotheses \ref{hyp:Garding} and \ref{hyp:FormCompare} with reference operator $\Lambda$ and associated self-adjoint operator $H$. Further, we shall assume that $\mu_\Lambda<1$ (and so Hypothesis \ref{hyp:kappa} is satisfied automatically) and it is with this assumption we show $K_H$ is H\"{o}lder continuous.

\begin{lemma}\label{oneoverRintegrablelemma}
Let $\Lambda$ be a self-adjoint positive-homogeneous operator with real symbol $R$ and homogeneous order $\mu_{\Lambda}$. If $\mu_{\Lambda}<1$, then
\begin{equation*}
\int_{\mathbb{V}^*}\frac{1}{(1+R(\xi))^{1-\epsilon}}d\xi<\infty
\end{equation*}
where $\epsilon=(1-\mu_{\Lambda})/2$. In particular, $(1+R)^{-1}\in L^1(\mathbb{V}^*)$.
\end{lemma}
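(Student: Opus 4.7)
The plan is to reduce the integral to a one-dimensional integral in the level parameter $\lambda = R(\xi)$ and then verify absolute integrability by a routine power-counting check controlled by the two inequalities $0 < \mu_{\Lambda} < 1$. The crucial input is the homogeneity of $R$.

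By Definition \ref{def:HomogeneousOperators}, there is a diagonalizable $E \in \Exp(\Lambda)$; then $F = E^*$ satisfies $R(t^F \xi) = t R(\xi)$ for all $t > 0$ and $\xi \in \mathbb{V}^*$, and $\tr F = \mu_{\Lambda}$. Using Proposition \ref{prop:OperatorRepresentation} together with Lemma \ref{lem:Scaling}, $R$ is comparable, up to positive multiplicative constants, to the anisotropic model $\sum_k |\xi_k|^{2m_k}$ on $\mathbb{V}^*$, so the sublevel set $A := \{\xi \in \mathbb{V}^* : R(\xi) \leq 1\}$ is bounded and hence has finite Lebesgue measure $V_0 := |A|$. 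Since $\{R \leq \lambda\} = \lambda^F A$, the linear change-of-variables formula yields
\begin{equation*}
\left|\{\xi \in \mathbb{V}^* : R(\xi) \leq \lambda\}\right| = |\det(\lambda^F)|\, V_0 = V_0\, \lambda^{\mu_{\Lambda}}
\end{equation*}
for all $\lambda > 0$.

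Treating this as the cumulative distribution function for the pushforward of Lebesgue measure under $R$, the integral in question collapses to the one-dimensional integral
\begin{equation*}
\int_{\mathbb{V}^*} \frac{d\xi}{(1 + R(\xi))^{1-\epsilon}} = V_0 \mu_{\Lambda} \int_0^{\infty} \frac{\lambda^{\mu_{\Lambda} - 1}}{(1+\lambda)^{1-\epsilon}} \, d\lambda.
\end{equation*}
Near $\lambda = 0$, the integrand behaves like $\lambda^{\mu_{\Lambda} - 1}$, which is integrable since $\mu_{\Lambda} > 0$. Near $\lambda = \infty$, it behaves like $\lambda^{\mu_{\Lambda} - 2 + \epsilon} = \lambda^{-1-\epsilon}$ by the choice $\epsilon = (1 - \mu_{\Lambda})/2$, which is integrable because $\mu_{\Lambda} < 1$. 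The ``in particular'' statement follows at once from the pointwise bound $(1+R)^{-1} \leq (1+R)^{-(1-\epsilon)}$, valid because $1 + R \geq 1$ and $\epsilon > 0$.

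The only mildly subtle step is establishing $V_0 < \infty$, which reduces to the observation that $R$ tends to $+\infty$ at infinity; everything downstream is bookkeeping. An alternative route introduces polar-type coordinates $\xi = \lambda^F \eta$ with $R(\eta) = 1$ and computes the Jacobian directly, producing the same one-dimensional integral; I prefer the distribution-function presentation because it sidesteps the construction of an induced surface measure on $\{R = 1\}$.
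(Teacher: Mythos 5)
Your argument is correct, and it takes a genuinely different route from the paper. The paper dyadically decomposes $\mathbb{V}^*$ into annuli $F_l=\{2^l\le R(\xi)^{1-\epsilon}\le 2^{l+1}\}$, shows via the scaling $R(t^{E^*}\xi)=tR(\xi)$ that $F_l=t^{lE^*}F_0$ with $t=2^{1/(1-\epsilon)}$, and then sums the geometric series $\sum_l 2^{-l}m(F_l)=m(F_0)\sum_l\bigl(2^{\mu_\Lambda/(1-\epsilon)-1}\bigr)^l$, which converges precisely because $\mu_\Lambda<1-\epsilon$. You instead extract the exact distribution function: from $\{R\le\lambda\}=\lambda^{E^*}\{R\le 1\}$ and $\det(\lambda^{E^*})=\lambda^{\tr E^*}=\lambda^{\mu_\Lambda}$ you get $\bigl|\{R\le\lambda\}\bigr|=V_0\,\lambda^{\mu_\Lambda}$, and then the pushforward of Lebesgue measure under $R$ has density $V_0\mu_\Lambda\lambda^{\mu_\Lambda-1}\,d\lambda$, reducing the problem to the elementary one-dimensional convergence of $\int_0^\infty \lambda^{\mu_\Lambda-1}(1+\lambda)^{\epsilon-1}\,d\lambda$. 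The two proofs exploit the same scaling input; yours packages it as an exact change of variables and a Beta-type integral, which is a bit cleaner and makes the roles of $\mu_\Lambda>0$ (integrability at $0$) and $\mu_\Lambda+\epsilon<1$ (integrability at $\infty$) transparent, whereas the paper's dyadic version avoids any appeal to the distribution-function formula and is, in that sense, more self-contained. One small point worth keeping: your claim that $V_0=\bigl|\{R\le 1\}\bigr|<\infty$ should be tied explicitly to the homogeneity (e.g., $R(t^{E^*}\eta)=tR(\eta)\to\infty$ as $t\to\infty$ for $\eta\ne 0$, using that $\{t^{E^*}\}$ is contracting as $t\to 0$), or directly to Proposition \ref{prop:LegendreContinuousPositiveDefinite}/\ref{prop:OperatorRepresentation} as you do; positive-definiteness alone does not force $R\to\infty$ at infinity for a general continuous function.
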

\begin{proof}
For any Borel set $B$, write $m(B)=\int_{B}d\xi$. It suffices to prove that
\begin{equation*}
\sum_{l=0}^{\infty}\frac{m(F_l)}{2^l}<\infty
\end{equation*}
where $F_l:=\{\xi\in\mathbb{V}^*:2^l\leq R(\xi)^{1-\epsilon}\leq 2^{l+1}\}$. To this end, fix $E\in\Exp(R)$ and observe that, for any $l\geq 1$, 
\begin{eqnarray*}
F_l&=&\left\{\xi:2^{l-1}\leq(t^{-1} R(\xi))^{1-\epsilon}\leq 2^l\right\}\\
&=&\left\{\xi:2^{l-1}\leq R(t^{-E}\xi)^{1-\epsilon}\leq 2^l\right\}\\
&=&\{t^E\xi:2^{l-1}\leq R(\xi)^{1-\epsilon}\leq 2^l\}=t^E F_{l-1}
\end{eqnarray*}
where we have set $t=2^{1/(1-\epsilon)}$. Continuing inductively we see that $F_l=t^{lE}F_0$ for all $l\in\mathbb{N}$ and so it follows that
\begin{equation*}
m(F_l)=\int_{t^{lE}F_0}d\xi=\int_{F_0}\det(t^{lE})d\xi=(t^{l\tr E})m(F_0)=t^{l\mu_{\Lambda}}m(F_0).
\end{equation*}
where we have used the fact that $\mu_{\Lambda}=\tr E^*=\tr E$ because $E^*\in\Exp(\Lambda)$. Consequently,
\begin{equation*}
\sum_{l=0}^{\infty}2^{-l}m(F_l)=m(F_0)\sum_{l=0}^{\infty}2^{-l}(t^{l\mu_{\Lambda}})=m(F_0)\sum_{l=0}^{\infty}\left(2^{-1}t^{\mu_{\Lambda}}\right)^{l}<\infty
\end{equation*}
because $2^{-1}t^{\mu_{\Lambda}}=2^{(\mu_{\Lambda}/(1-\epsilon)-1)}<1$.
\end{proof}

\begin{lemma}\label{holdercontlemma}
Let $|\cdot|$ be a norm on $\mathbb{V}$ and suppose that $\mu_{\Lambda}<1$. There exists $C>0$ such that
\begin{equation*}
\int_{\mathbb{V}*}\frac{|e^{i\xi(x)}-e^{i\xi(y)}|^2}{1+R(\xi)}d\xi\leq C|x-y|^{(1-\mu_{\Lambda})}
\end{equation*}
for all $x,y\in\mathbb{V}$.
\end{lemma}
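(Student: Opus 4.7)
The plan is to split off the dependence on $x-y$ via a pointwise interpolation and then reduce the remaining integral to one covered by Lemma~\ref{oneoverRintegrablelemma}. First, I would use the elementary estimate
\begin{equation*}
|e^{ia}-1|^2\leq \min(a^2,4)\leq 4\,|a|^{1-\mu_{\Lambda}}\qquad(a\in\mathbb{R}),
\end{equation*}
which is immediate because $\mu_{\Lambda}\in[0,1)$: for $|a|\geq 1$ the right-hand side dominates $4$, while for $|a|<1$ it dominates $a^{2}$. Substituting $a=\xi(x-y)$ and writing $|\cdot|_{*}$ for the dual norm on $\mathbb{V}^{*}$, the bound $|\xi(x-y)|\leq|\xi|_{*}\,|x-y|$ lets me pull the factor $|x-y|^{1-\mu_{\Lambda}}$ out of the integral, reducing the desired inequality to proving
\begin{equation*}
\int_{\mathbb{V}^{*}}\frac{|\xi|_{*}^{1-\mu_{\Lambda}}}{1+R(\xi)}\,d\xi<\infty.
\end{equation*}

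Next, I would control the numerator by the denominator via a Euclidean bound $|\xi|_{*}^{2}\leq C(1+R(\xi))$. In the linear coordinate system provided by Proposition~\ref{prop:OperatorRepresentation}, the inequality $\sum_{|\alpha:\mathbf{m}|\leq 1}\xi^{2\alpha}\leq C'(1+R(\xi))$ established inside the proof of Lemma~\ref{charsobolevbyfourierlem2} already contains, via the multi-indices $\alpha=e_{k}$ (for which $|e_{k}:\mathbf{m}|=1/m_{k}\leq 1$), the monomial $\xi_{k}^{2}$; summing over $k$ therefore yields $|\xi|^{2}\leq C(1+R(\xi))$ in those coordinates, and equivalence of norms on a finite-dimensional vector space promotes this to the desired estimate for any norm on $\mathbb{V}^{*}$. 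Raising to the power $\epsilon:=(1-\mu_{\Lambda})/2$ then gives
\begin{equation*}
\frac{|\xi|_{*}^{1-\mu_{\Lambda}}}{1+R(\xi)}\leq \frac{C''}{(1+R(\xi))^{1-\epsilon}},
\end{equation*}
and the right-hand side is integrable on $\mathbb{V}^{*}$ by Lemma~\ref{oneoverRintegrablelemma}. Combining the three steps closes the proof.

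I do not expect a serious obstacle. The only genuinely structural input is the bound $|\xi|_{*}^{2}\lesssim 1+R(\xi)$: it rests on the fact that positive-definiteness of $R$ forces every pure axial monomial $\xi_{k}^{2m_{k}}$ to appear with a strictly positive coefficient in the semi-elliptic representation of Proposition~\ref{prop:OperatorRepresentation}, and the constraint $m_{k}\geq 1$ then ensures that the quadratic growth $|\xi|^{2}$ is dominated by the slowest-growing direction of $R$. Everything else is an interpolation identity and a direct appeal to the preceding lemma.
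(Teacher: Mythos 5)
Your proof is correct and takes a genuinely different route from the paper. The paper's argument is a scaling decomposition: it sets $t=|x-y|^{-1}$, splits $\mathbb{V}^*$ into the regions $\{R(\xi)\ge t\}$ and $\{R(\xi)<t\}$, bounds $|e^{i\xi(x)}-e^{i\xi(y)}|^2$ crudely by $4$ on the first region and by $|\xi(x-y)|^2$ on the second, and then applies the change of variables $\xi\mapsto t^{E^*}\xi$ together with the eigenvalue bound $\max\Spec(E)\le 1/2$ to obtain the two terms $t^{\mu_\Lambda-1}$ and $t^{\mu_\Lambda+1}|x-y|^2$, each of which equals $|x-y|^{1-\mu_\Lambda}$. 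Your argument avoids the scaling and the region decomposition entirely: the single pointwise interpolation inequality $|e^{ia}-1|^2\le\min(a^2,4)\le 4|a|^{1-\mu_\Lambda}$ (valid precisely because $1-\mu_\Lambda\in(0,2]$, as you check) immediately factors out $|x-y|^{1-\mu_\Lambda}$, and the remaining integral is handled by the already-established bound $|\xi|_*^2\le C(1+R(\xi))$ — which does indeed follow from the inequality in the proof of Lemma~\ref{charsobolevbyfourierlem2}, or alternatively from Lemma~\ref{lem:Scaling} applied with $\alpha=2e_k$ — combined with Lemma~\ref{oneoverRintegrablelemma} with $\epsilon=(1-\mu_\Lambda)/2$. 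Your approach is shorter and more elementary, trading the anisotropic scaling argument for an isotropic pointwise bound; it works here because only the upper bound is sought, and the anisotropy of $R$ enters solely through the already-packaged integrability statement of Lemma~\ref{oneoverRintegrablelemma}.
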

\begin{proof}
Let $\mathbf{m}\in \mathbb{N}_+^d$ and $\mathbf{v}$ be that guaranteed by Proposition \ref{prop:OperatorRepresentation} and set $E=E_{\mathbf{v}}^{2\mathbf{m}}\in \Exp(\Lambda)$. We note that it suffices to prove the desired estimate where $|\cdot|$ is the Euclidean norm associated the coordinate system defined by $\mathbf{v}$.  In view of the preceding lemma, 
\begin{equation*}
\frac{|e^{i\xi(x)}-e^{i\xi(y)}|^2}{(1+R(\xi))}\leq 4(1+R(\xi))^{-1}\in L^1(\mathbb{V}^*)
\end{equation*} for all $x,y\in\mathbb{V}$. Consequently, it suffices to treat only the case in which $0<|x-y|\leq 1$. In this case, set $t=|x-y|^{-1}$ and observe that
\begin{eqnarray*}
\int_{\mathbb{V}^*}\frac{|e^{i\xi(x)}-e^{i\xi(y)}|^2}{(1+R(\xi))}d\xi&=&\int_{t\leq R(\xi)}\frac{|e^{i\xi(x)}-e^{i\xi(y)}|^2}{(1+R(\xi))}d\xi+\int_{t>R(\xi)}\frac{|e^{i\xi(x)}-e^{i\xi(y)}|^2}{(1+R(\xi))}d\xi\\
&\leq&\int_{t\leq R(\xi)}\frac{4}{R(\xi)}d\xi+\int_{t>R(\xi)}|e^{i\xi(x)}-e^{i\xi(y)}|^2d\xi\\
&\leq&\int_{1\leq R(\xi)}\frac{4}{R(t^{E^*}\xi)}t^{\mu_{\Lambda}}d\xi+\int_{1>R(\xi)}|e^{i\xi(t^{E}x)}-e^{i\xi(t^{E}y)}|^2t^{\mu_{\Lambda}}d\xi\\
&\leq&t^{\mu_{\Lambda}-1}\int_{1\leq R(\xi)}\frac{4}{R(\xi)}d\xi+t^{\mu_{\Lambda}}|t^{E}(x-y)|^2\int_{1>R(\xi)}4|\xi|_{*}^2d\xi\\
\end{eqnarray*}
where $|\cdot|_{*}$ is the corresponding dual norm on $\mathbb{V}^*$. Using Lemma \ref{oneoverRintegrablelemma} and the fact that $|\xi|_{*}^2$ is bounded on the bounded set $\{1>R(\xi)\}$, it follows that
\begin{equation*}
\int_{\mathbb{V}^*}\frac{|e^{i\xi(x)}-e^{i\xi(y)}|^2}{(1+R(\xi))}d\xi\leq C\left(t^{\mu_{\Lambda}-1}+t^{\mu_{\Lambda}}|t^{E}(x-y)|^2\right)
\end{equation*}
for some $C>0$. Given that $\max(\Spec(E))\leq 1/2$ in view of Proposition \ref{prop:OperatorRepresentation}, we have $|t^E(x-y)|\leq t^{1/2}|x-y| $ because $t\geq 1$ and $|\cdot|$ is the Euclidean norm associated to $\mathbf{v}$. Consequently,
\begin{equation*}
\int_{\mathbb{V}^*}\frac{|e^{i\xi(x)}-e^{i\xi(y)}|^2}{(1+R(\xi))}d\xi\leq C\left(t^{\mu_{\Lambda}-1}+t^{\mu_{\Lambda}+1}|x-y|^2\right)= 2C|x-y|^{(1-\mu_{\Lambda})}.
\end{equation*}
\end{proof}

\noindent The following lemma is analogous to Lemma 14 of \cite{Davies1995}.

\begin{lemma}\label{phiexistslem}
Let $Q$ satisfy Hypotheses \ref{hyp:Garding} and \ref{hyp:FormCompare} on $L^2(\Omega)$ with associated self-adjoint operator $H$ and reference operator $\Lambda$ and assume that $\mu_{\Lambda}<1$. There exists a uniformly bounded function $\phi:\Omega\rightarrow L^2(\Omega)$ such that for every $f\in L^2(\Omega)$,
 \begin{equation}\label{phiexistseq}
 \{(H+1)^{-1/2}f\}(x)=\langle f,\phi(x)\rangle
 \end{equation}
for almost every $x\in\Omega$. Moreover, $\phi$ is H\"{o}lder continuous of order $\alpha=(1-\mu_{\Lambda})/2$. In particular, $(H+1)^{-1/2}$ is a bounded operator from $L^2(\Omega)$ into $L^{\infty}(\Omega)$ and for each $f\in L^2(\Omega)$, there is a version of $(H+1)^{-1/2}f$ which is bounded and H\"{o}lder continuous of order $\alpha$.
\end{lemma}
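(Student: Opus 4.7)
The plan is to exhibit a Hölder continuous representative of $(H+1)^{-1/2}g$ for every $g\in L^2(\Omega)$ and then produce $\phi$ via Riesz representation. Three ingredients will do the job: the Gagliardo--Nirenberg-type bound of Lemma \ref{nashlikelem}, the equivalence of form norms built into Hypothesis \ref{hyp:Garding} (via Lemma \ref{gardingsobolevlem}), and the Fourier-side Hölder estimate of Lemma \ref{holdercontlemma}.

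First, I would combine Lemma \ref{nashlikelem} with Young's inequality to obtain $\|f\|_\infty\leq C\|(\Lambda+1)^{1/2}f\|_2$ on $W^{\mathbf{m},2}_{\mathbf{v},0}(\Omega)$, and then use Hypothesis \ref{hyp:Garding} to transfer this into
\[\|f\|_\infty\leq C\|(H+1)^{1/2}f\|_2\qquad \text{for all }f\in\Dom((H+1)^{1/2}).\]
Setting $f=(H+1)^{-1/2}g$ yields boundedness of $(H+1)^{-1/2}:L^2(\Omega)\to L^\infty(\Omega)$. Next, for $f\in C_0^\infty(\Omega)$, Fourier inversion (applied to the zero-extension $f_*$) and Cauchy--Schwarz give
\[|f(x)-f(y)|^2 \leq \int_{\mathbb{V}^*}\frac{|e^{i\xi(x)}-e^{i\xi(y)}|^2}{1+R(\xi)}\,d\xi\cdot \int_{\mathbb{V}^*}(1+R(\xi))|\hat{f}_*(\xi)|^2\,d\xi.\]
Lemma \ref{holdercontlemma} bounds the first factor by $C|x-y|^{1-\mu_\Lambda}$, while Lemma \ref{charsobolevbyfourierlem2} together with Hypothesis \ref{hyp:Garding} bounds the second by $C\|(H+1)^{1/2}f\|_2^2$, giving
\[|f(x)-f(y)|\leq C|x-y|^{(1-\mu_\Lambda)/2}\|(H+1)^{1/2}f\|_2.\]

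The main technical step will be to promote these pointwise estimates on $C_0^\infty(\Omega)$ to a genuine continuous representative on all of $\Dom((H+1)^{1/2})$. Given $g\in L^2(\Omega)$ and $F=(H+1)^{-1/2}g$, I would choose $\{f_n\}\subseteq C_0^\infty(\Omega)$ with $f_n\to F$ in the graph norm of $(H+1)^{1/2}$ (possible because $C_0^\infty(\Omega)$ is a core for $Q$). Applying the sup and Hölder bounds to differences $f_n-f_m$ shows that $\{f_n\}$ is Cauchy in $L^\infty(\Omega)$ and equicontinuous, so $f_n$ converges uniformly to a Hölder continuous function $\tilde F$ agreeing with $F$ almost everywhere and satisfying $\|\tilde F\|_\infty\leq C\|g\|_2$ and $|\tilde F(x)-\tilde F(y)|\leq C|x-y|^{(1-\mu_\Lambda)/2}\|g\|_2$.

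For each fixed $x\in\Omega$, the map $g\mapsto \tilde F(x)$ is then a bounded linear functional on $L^2(\Omega)$ of norm at most $C$, so Riesz representation produces $\phi(x)\in L^2(\Omega)$ with $\|\phi(x)\|_2\leq C$ and $\tilde F(x)=\langle g,\phi(x)\rangle$, which is \eqref{phiexistseq}. To upgrade the a priori weak Hölder continuity of $x\mapsto\phi(x)$ to strong $L^2$-valued Hölder continuity, I would substitute $g=\phi(x)-\phi(y)$ into the Hölder bound for $\tilde F$, obtaining
\[\|\phi(x)-\phi(y)\|_2^2 \leq C|x-y|^{(1-\mu_\Lambda)/2}\|\phi(x)-\phi(y)\|_2,\]
so $\|\phi(x)-\phi(y)\|_2\leq C|x-y|^{(1-\mu_\Lambda)/2}$. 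The only genuinely delicate point I anticipate is the extraction of the continuous representative $\tilde F$ from the $C_0^\infty$ approximants; everything else is either a direct application of the cited lemmas or the standard Riesz--substitution trick.
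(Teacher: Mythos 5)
Your argument is correct and reaches the same estimates, but it follows a slightly different path than the paper. The paper dispenses with the approximation step entirely: for any $g\in W_{\mathbf{v},0}^{\mathbf{m},2}(\Omega)$ it uses the weighted Cauchy--Schwarz estimate
\[
\int_{\mathbb{V}^*}(1+R(\xi))^{\epsilon/2}|\widehat{g_*}(\xi)|\,d\xi
\;\leq\;
\Bigl(\int_{\mathbb{V}^*}\tfrac{(1+R)^{\epsilon}}{1+R}\,d\xi\Bigr)^{1/2}
\Bigl(\int_{\mathbb{V}^*}(1+R)|\widehat{g_*}|^2\,d\xi\Bigr)^{1/2},
\]
together with Lemma~\ref{oneoverRintegrablelemma}, to show $\widehat{g_*}\in L^1(\mathbb{V}^*)$. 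This simultaneously gives $\|g\|_\infty\leq\|\widehat{g_*}\|_1\leq C\|(1+H)^{1/2}g\|_2$ \emph{and} justifies the pointwise Fourier inversion used in the H\"older estimate for all $g$ in the form domain, so the paper never has to pass from $C_0^\infty(\Omega)$ to $\Dom(Q)$ by density. You instead obtain the $L^\infty$ bound from Lemma~\ref{nashlikelem} plus Young's inequality and Hypothesis~\ref{hyp:Garding}, prove the H\"older estimate only on $C_0^\infty(\Omega)$, and then run a core/uniform-convergence argument to extract the continuous representative. Both routes are sound; yours reuses the already-proved Gagliardo--Nirenberg inequality but pays for it with the extra limiting step, while the paper's $L^1$-Fourier argument is a bit more self-contained and short-circuits that approximation. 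Your final Riesz-representation and substitution steps ($f=\phi(x)$ and $f=\phi(x)-\phi(y)$) are exactly the paper's.

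One small remark: when you apply Young's inequality to $Q_{\Lambda_\Omega}(f)^{\mu_\Lambda/2}\|f\|_2^{1-\mu_\Lambda}$, you should state the intermediate inequality $ab\leq\theta a^{1/\theta}+(1-\theta)b^{1/(1-\theta)}$ (with $\theta=\mu_\Lambda$, $a=Q_{\Lambda_\Omega}(f)^{1/2}$, $b=\|f\|_2$) explicitly, and then pass to $\bigl(Q_{\Lambda_\Omega}(f)+\|f\|_2^2\bigr)^{1/2}$; as written the derivation of $\|f\|_\infty\leq C\|(\Lambda+1)^{1/2}f\|_2$ is only sketched. Otherwise no gaps.
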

\begin{proof}
In view of \eqref{eq:Garding},
\begin{equation*}
\int_{\mathbb{V}^*}(1+R(\xi))|\widehat {g_*}(\xi)|^2d\xi\leq 2\|(1+H)^{1/2}g\|_2^2
\end{equation*}
for all $g\in W_{\mathbf{v},0}^{\mathbf{m},2}(\Omega)$ where $R$ is the symbol of $\Lambda$ and $g_*$ denotes the extension of $g$ to $\mathbb{V}$ defined by \eqref{eq:ExtensionDefinition}. Also by the Cauchy-Schwarz inequality
\begin{equation*}
\int_{\mathbb{V}^*}(1+R(\xi))^{\epsilon/2}|\widehat{g_*}(\xi)|d\xi\leq C\left(\int_{\mathbb{V}^*}(1+R(\xi))|\widehat{g_*}(\xi)|^2d\xi\right)^{1/2}
\end{equation*}
where
\begin{equation*}
C^2=\int_{\mathbb{V}^*}\frac{(1+R(\xi))^{\epsilon}}{(1+R(\xi))}d\xi<\infty
\end{equation*}
in view of Lemma \ref{oneoverRintegrablelemma}. Consequently, for all $g\in W_{\mathbf{v},0}^{\mathbf{m},2}(\Omega)$, $\widehat{ g_*}\in L^1(\mathbb{V}^*)$ and
\begin{equation}\label{phiexistseq1}
 \|g\|_{\infty}=\|g_*\|_{L^\infty(\mathbb{V})}\leq\int_{\mathbb{V}^*}(1+R(\xi))^{\epsilon/2}|\widehat{g_*}(\xi)|d\xi\leq C\|(1+H)^{1/2}g\|_2.
\end{equation}
So $(H+1)^{1/2}$ is an injective self-adjoint operator and therefore has dense range in $L^2(\Omega)$. We can therefore consider $(H+1)^{-1/2}$, which by \eqref{phiexistseq1} is a bounded operator from $L^2(\Omega)$ into $L^{\infty}(\Omega)$.

Let $|\cdot|$ be a norm on $\mathbb{V}$ and for $f\in L^2(\Omega)$ set $g=(H+1)^{-1/2}f$. For almost every $x,y\in \Omega$ we have
\begin{eqnarray}\label{holderconteqg}\nonumber
|g(x)-g(y)|&\leq&\int_{\mathbb{V}^*}|e^{i\xi(x)}-e^{i\xi(y)}||\widehat{g_*}(\xi)|d\xi\\\nonumber
&\leq&\left(\int_{\mathbb{V}^*}(1+R(\xi))|\widehat{ g_*}(\xi)|^2d\xi\right)^{1/2}\left(\int_{\mathbb{V}^*}\frac{|e^{i\xi(x)}-e^{i\xi(y)}|^2}{(1+R(\xi))}d\xi\right)^{1/2}\\
&\leq& c\|f\|_2\left(\int_{\mathbb{V}^*}\frac{|e^{i\xi(x)}-e^{i\xi(y)}|^2}{(1+R(\xi))}d\xi\right)^{1/2}\leq C\|f\|_2|x-y|^{\alpha}
\end{eqnarray}
in view of the previous lemma. It follows from \eqref{phiexistseq1} that for almost every $x\in\Omega$, there exists $\phi(x)\in L^2(\Omega)$ such that
\begin{equation*}
(H+1)^{-1/2}f(x)=\langle f,\phi(x)\rangle.
\end{equation*}
By putting $f=\phi(x)$, another application of \eqref{phiexistseq1} shows that  $\|\phi(x)\|_2\leq C$. Moreover, \eqref{holderconteqg} guarantees that
\begin{equation*}
|(f,\phi(x)-\phi(y))|\leq C\|f\|_2|x-y|^{\alpha}
\end{equation*}
from which it follows that $\|\phi(x)-\phi(y)\|_2\leq C|x-y|^{\alpha}$ almost everywhere. Finally, redefine $\phi$, so that all of the above statements hold on all of $\Omega$.
\end{proof}

\noindent Our final result of this section shows that the heat kernel $K_H$ can be analytically continued in its time variable to the open half-plane $\mathbb{C}_+$ provided $\mu_{\Lambda}<1$.

\begin{theorem}\label{thm:MainMeasurable}
Let $Q$ satisfy Hypotheses \ref{hyp:Garding} and \ref{hyp:FormCompare} on $L^2(\Omega)$ with associated self-adjoint operator $H$ and reference operator $\Lambda$. Let $R$ be the symbol of $\Lambda$ and $\mu_{\Lambda}$ be its homogeneous order. If $\mu_{\Lambda}<1$, there exists $K_H:\mathbb{C}_+\times\Omega\times\Omega\rightarrow\mathbb{C}$ such that
\begin{equation*}
\left(e^{-zH}f\right)(x)=\int_{\Omega}K_H(z,x,y)f(y)dy
\end{equation*}
for all $f\in L^1(\Omega)\cap L^2(\Omega)$. For fixed $z\in\mathbb{C}_+$, $K_H(z,\cdot,\cdot):\Omega\times\Omega\rightarrow \mathbb{C}$ is H\"{o}lder continuous of order $\alpha=(1-\mu_{\Lambda})/2$. Moreover for each $x,y\in\Omega$, $\mathbb{C}_+\ni z\mapsto K_H(z,x,y)$ is analytic. Finally, there exists constants $C>0$ and $M\geq 0$ such that
\begin{equation*}
|K_H(t,x,y)|\leq \frac{C}{t^{\mu_{\Lambda}}}\exp\left(-tMR^{\#}\left(\frac{x-y}{t}\right)+Mt\right)
\end{equation*}
for all $x,y\in\Omega$ and $t>0$ where $R^{\#}$ is the Legendre-Fenchel transform of $R$ and $C$ and $M$ are positive constants.
\end{theorem}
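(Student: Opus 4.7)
The plan is to construct the kernel on all of $\mathbb{C}_+$ via the formula
\[
K_H(z,x,y) := \langle (H+1)e^{-zH}\phi(y),\,\phi(x)\rangle_{L^2(\Omega)},
\]
where $\phi:\Omega\to L^2(\Omega)$ is the bounded, $\alpha$-H\"older-continuous ``evaluation vector'' produced by Lemma \ref{phiexistslem} (applicable because $\mu_\Lambda<1$). The motivating identity is the spectral factorization
\[
e^{-zH} \;=\; (H+1)^{-1/2}\,\bigl[(H+1)e^{-zH}\bigr]\,(H+1)^{-1/2},
\]
valid as bounded operators on $L^2(\Omega)$ for each $z\in\mathbb{C}_+$ since $H\geq 0$ is self-adjoint and all three factors commute as functions of $H$. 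Applying Lemma \ref{phiexistslem} inside each $(H+1)^{-1/2}$ factor will extract the pointwise values of $e^{-zH}f$ and produce the desired integral representation.

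First I would verify that $A_z:=(H+1)e^{-zH}$ is a holomorphic map from $\mathbb{C}_+$ into the bounded operators on $L^2(\Omega)$. Because $H\geq 0$, the spectral calculus yields
\[
\|A_z\|_{2\to 2} \;\leq\; \sup_{\lambda\geq 0}(1+\lambda)e^{-\lambda\Re z} \;\leq\; 1 + \frac{C}{\Re z},
\]
and the holomorphy of $z\mapsto e^{-zH}$ on $\mathbb{C}_+$, standard for non-negative self-adjoint generators, transfers to $A_z$. In particular, $K_H(z,x,y)$ is well-defined for all $(z,x,y)\in\mathbb{C}_+\times\Omega\times\Omega$ and analytic in $z$ for each fixed $x,y$. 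Spatial H\"older continuity of order $\alpha$ is then a short consequence of the decomposition
\[
K_H(z,x,y)-K_H(z,x',y') = \langle A_z(\phi(y)-\phi(y')),\phi(x)\rangle + \langle A_z\phi(y'),\phi(x)-\phi(x')\rangle
\]
together with the bounds $\|\phi(\cdot)\|_2\leq C$ and $\|\phi(w)-\phi(w')\|_2\leq C|w-w'|^\alpha$ from Lemma \ref{phiexistslem}. The integral representation for $f\in L^1(\Omega)\cap L^2(\Omega)$ will follow by setting $u:=(H+1)^{-1/2}f$, computing $u(y)=\langle f,\phi(y)\rangle$, applying $A_z$ to $u$ in $L^2(\Omega)$, invoking Lemma \ref{phiexistslem} a second time to read off the pointwise values of $(H+1)^{-1/2}A_z u$, and finally using Fubini to rearrange into the integral against $K_H$.

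For the off-diagonal estimate at real times, Proposition \ref{prop:kappa} makes Hypothesis \ref{hyp:kappa} automatic under the assumption $\mu_\Lambda<1$, so Theorem \ref{thm:Main} is directly applicable and supplies a kernel satisfying the displayed bound. Uniqueness of measurable integral kernels forces agreement with our $K_H(t,\cdot,\cdot)$ almost everywhere on $\Omega\times\Omega$, and the joint continuity already established upgrades this to pointwise equality. I expect the main technical obstacle to be the careful justification of the integral representation: iterating Lemma \ref{phiexistslem} through the operator-valued intermediate $A_z$ requires verifying measurability of $y\mapsto \langle A_z\phi(y),\phi(x)\rangle$ for each fixed $x$, integrability against $f$, and the legitimacy of the Fubini exchange, none of which is automatic from the $L^2(\Omega)$-valued H\"older bound on $\phi$ alone.
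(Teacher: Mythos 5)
Your proposal is essentially the paper's proof: you define $K_H(z,x,y)=\langle (H+1)e^{-zH}\phi(y),\phi(x)\rangle$ via Lemma \ref{phiexistslem}, deduce analyticity and joint H\"older continuity from the boundedness and H\"older continuity of $\phi$, identify this as the integral kernel through the factorization $e^{-zH}=(H+1)^{-1/2}\bigl[(H+1)e^{-zH}\bigr](H+1)^{-1/2}$, and obtain the off-diagonal bound at real times from Proposition \ref{prop:kappa}, Theorem \ref{thm:Main}, and uniqueness of the kernel together with the continuity just established. The one issue you flag as unresolved --- justifying the interchange that produces $\int_\Omega K_H(z,x,y)f(y)\,dy$ from $\langle A_z (H+1)^{-1/2}f,\phi(x)\rangle$ --- is precisely where the paper invokes the Hermitian symmetry of the resolvent kernel, $\phi_x(y)=\overline{\phi_y(x)}$ (a consequence of $H$ being self-adjoint), which makes the double-integral rearrangement legitimate for $f\in L^1(\Omega)\cap L^2(\Omega)$; you will want to record this identity explicitly to close the argument.
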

\begin{proof}
The fact that $e^{-zH}$ is a bounded holomorphic semigroup ensures that $B(z)=(1+H)e^{-zH}$ is a bounded holomorphic function on $L^2(\Omega)$ for $z\in\mathbb{C}_+$. For $x,y\in\Omega$, $z\in \mathbb{C}_+$ define
\begin{equation*}
K(z,x,y):=\langle B(z)\phi(y),\phi(x)\rangle 
\end{equation*}
where $\phi$ is that given by the preceding lemma. It follows that $\mathbb{C}_+\ni z\mapsto K(z,x,y)$ is analytic for any $x,y\in\Omega$. Now for fixed $z\in\mathbb{C}_+$, $K(z,\cdot,\cdot)$ is H\"{o}lder continuous of order $\alpha$. To see this, let $|\cdot|$ be a norm on $\mathbb{V}$ and, with the help of Lemma \ref{phiexistslem}, observe that for $z\in \mathbb{C}_+$,
\begin{eqnarray*}
|K(z,x,y)-K(z,x',y')|&\leq&|K(z,x,y)-K(z,x',y)|+|K(z,x',y)-K(z,x',y')|\\
&\leq& C\|B(z)\|_{2\to 2}\left(\|\phi(x)-\phi(x')\|_2+\|\phi(y)-\phi(y')\|_2\right)\\
&\leq&C \|B(z)\|_{2\to 2}\left (|x-x'|^{2(\alpha/2)}+|y-y'|^{2(\alpha/2)}\right)\\
&\leq&C\|B(z)\|_{2\to 2}\left(|x-x'|^2+|y-y'|^2\right)^{\alpha/2}
\end{eqnarray*}
for all $(x,y),(x',y')\in\Omega\times\Omega$ as claimed. 

It remains to show that $K(z,x,y)$ is the integral kernel of $e^{-zH}$, for then $K_H(t,\cdot,\cdot)=K(t,\cdot,\cdot)$ for $t>0$ and so the final estimate follows from Theorem \ref{thm:Main} in view of Proposition \ref{prop:kappa}. To this end, an appeal to Lemma \ref{phiexistslem} shows that $(H+1)^{-1/2}:L^2(\Omega)\rightarrow L^{\infty}(\Omega)$ is bounded and so $(H+1)^{-1/2}:L^1(\Omega)\rightarrow L^2(\Omega)$ is also bounded by duality. More is true: Using the self-adjointness of $H$ one can check that
\begin{equation*}
\phi_x(y)=\overline{\phi_y(x)}
\end{equation*}
for almost every $x,y\in\Omega$. Here, the variable of integration is that which appears in the subscript. So, for $f\in L^1(\Omega)\cap L^2(\Omega)$,
\begin{eqnarray*}
\left(e^{-Hz}f\right)(x)&=&((H+1)^{-1/2}B(z)(H+1)^{-1/2}f)(x)\\
&=&\int_{\Omega}(B(z)(H+1)^{-1/2}f)(w)\overline{\phi_w(x)}dw\\
&=&\int_{\Omega}\langle f,\phi(w)\rangle\overline{(B(z)\phi(x)}(w)dw\\
&=&\int_{\Omega}\int_{\Omega}f(y)\overline{\phi_y(w)}\overline{(B(z)\phi(x)}(w)dwdy\\
&=&\int_{\Omega}\int_{\Omega}f(y)\phi_w(y)\overline{(B(z)\phi(x)}(w)dwdy\\
&=&\int_{\Omega}\int_{\Omega}(B(z)\phi(y))(w)\overline{\phi_w(x)}dwf(y)dy
\end{eqnarray*}
as desired.

\end{proof}

\section{Super-semi-elliptic operators}

In this section, we consider a class of partial differential operators to which we apply the theory of the preceding sections. We call this class of operators super-semi-elliptic operators, a term motivated by the super-elliptic operators of E. B. Davies \cite{Davies1995} (see also \cite{Barbatis1996,terElst1997}). Naturally, the class of super-semi-elliptic operators defined below includes the class of super-elliptic operators and our results recapture those of \cite{Davies1995}. \\

\noindent Let $\mathbf{m}=(m_1,m_2,\dots,m_d)\in\mathbb{N}_+^d$, $\mathbf{v}=\{v_1,v_2,\dots,v_d\}$ be a basis of $\mathbb{V}$ and take $E=E_{\mathbf{v}}^{2\mathbf{m}}\in\Gl(\mathbb{V})$ in the notation of \eqref{eq:DefofE}. Given a non-empty open subset $\Omega$ of $\mathbb{V}$, consider the sesquilinear form on $L^2(\Omega)$ given by
\begin{equation*}
Q(f,g)=\sum_{\substack{|\alpha:\mathbf{m}|\leq 1\\|\beta:\mathbf{m}|\leq 1}}\int_{\Omega}a_{\alpha,\beta}(x)D_{\mathbf{v}}^\alpha f(x)\overline{D_{\mathbf{v}}^\beta g(x)}\,dx
\end{equation*}
and defined initially for $f,g\in C_0^\infty(\Omega)$. We shall (minimally) require the following conditions for the functions $a_{\alpha,\beta}$:
\begin{enumerate}[label=(C.\arabic*)]
\item\label{cond:meas} The collection
\begin{equation*}
\{a_{\alpha,\beta}(\cdot)\}_{\substack{|\alpha:\mathbf{m}|\leq 1\\|\beta:\mathbf{m}|\leq 1}}\subseteq L^{\infty}(\Omega)
\end{equation*}
and we shall put
\begin{equation*}
\Gamma=\max_{\substack{|\alpha:\mathbf{m}|\leq 1\\|\beta:\mathbf{m}|\leq 1}}\|a_{\alpha,\beta}\|_{\infty}.
\end{equation*}
\item\label{cond:hermitian} For each $x\in\Omega $, the matrix
\begin{equation*}
\left\{a_{\alpha,\beta}(x)\right\}_{\substack{|\alpha:\mathbf{m}|\leq 1\\|\beta:\mathbf{m}|\leq 1}}
\end{equation*}
is Hermitian.
\item\label{cond:compare} There exists $\{A_{\alpha,\beta}:|\alpha:\mathbf{m}|=1,|\beta:\mathbf{m}|=1\}\subseteq \mathbb{R}$ such that
\begin{equation*}
\Lambda:=\sum_{\substack{|\alpha:\mathbf{m}|= 1\\|\beta:\mathbf{m}|= 1}}A_{\alpha,\beta}D_{\mathbf{v}}^{\alpha+\beta}
\end{equation*}
has positive definite symbol $R$ (and so is a positive-homogeneous operator with $E\in \Exp(\Lambda)$ and $\mu_{\Lambda}=|\mathbf{1}:2\mathbf{m}|$) and for some $C\geq 1$,
\begin{equation*}
\frac{3}{4}\sum_{\substack{|\alpha:\mathbf{m}|= 1\\|\beta:\mathbf{m}|= 1}}A_{\alpha,\beta}\eta_{\alpha}\overline{\eta}_\beta\leq \sum_{\substack{|\alpha:\mathbf{m}|= 1\\|\beta:\mathbf{m}|= 1}}a_{\alpha,\beta}(x)\eta_{\alpha}\overline{\eta}_{\beta}\leq C \sum_{\substack{|\alpha:\mathbf{m}|= 1\\|\beta:\mathbf{m}|= 1}}A_{\alpha,\beta}\eta_{\alpha}\overline{\eta}_\beta
\end{equation*}
for all $\eta\in \oplus_{|\alpha:\mathbf{m}|=1}\mathbb{C}$ and almost every $x\in\Omega$.
\end{enumerate}
Under the above conditions, we shall prove that the sesquilinear form $Q$ is symmetric, bounded below and therefore closable. Its closure is then associated to a self-adjoint operator $H$ on $L^2(\Omega)$ formally given by
\begin{equation}\label{eq:Hindivergenceform}
H=\sum_{\substack{|\alpha:\mathbf{m}|\leq 1\\|\beta:\mathbf{m}|\leq 1}}D_{\mathbf{v}}^\beta\left\{a_{\alpha,\beta}(x)D_\mathbf{v}^\alpha\right\}.
\end{equation}
When Conditions \ref{cond:meas}, \ref{cond:hermitian} and \ref{cond:compare} are satisfied, the sesquilinear form $Q$ is said to be \emph{$\{2\mathbf{m},\mathbf{v}\}$-super-semi-elliptic} or simply \emph{super-semi-elliptic}. Correspondingly, we say that the associated self-adjoint operator $H$ is \emph{$\{2\mathbf{m},\mathbf{v}\}$-super-semi-elliptic} or simply \emph{super-semi-elliptic}. For such a sesquilinear form $Q$, we call $\Lambda$ its associated semi-elliptic reference operator and 
\begin{equation*}
\mu_{\Lambda}=\tr E=|\mathbf{1}:2\mathbf{m}|
\end{equation*}
its homogeneous order. As the following proposition shows, there is a constant $C\geq 0$ for which the sesquilinear form $Q+C$, defined by
\begin{equation*}
(Q+C)(f,g)=Q(f,g)+C\langle f,g\rangle
\end{equation*}
for $f,g\in \Dom(Q)$, satisfies Hypothesis \ref{hyp:Garding} with positive-homogeneous reference operator $\Lambda$.

\begin{proposition}\label{prop:SuperSatisfiesHypothesis1}
Let $Q$ be a $\{2\mathbf{m},\mathbf{v}\}$-super-semi-elliptic form on $L^2(\Omega)$. Then $Q$ extends to a closed and symmetric sesquilinear form on $L^2(\Omega)$ (also denoted by $Q$) with domain
\begin{equation*}
\Dom(Q)=\Dom(Q_{\Lambda})=W_{\mathbf{v},0}^{2,\mathbf{m}}(\Omega).
\end{equation*}
Further, $Q$ is bounded below by some constant $-C$ for $C\geq 0$ and the form $Q+C$ satisfies Hypothesis \ref{hyp:Garding} with reference operator $\Lambda$. We denote by $H$ the self-adjoint operator associated to $Q$ (and corresponding formally with \eqref{eq:Hindivergenceform}). If $H$ (and $Q$) consists only of principal terms, i.e.,
\begin{equation}\label{eq:HDivergenceFormHomogeneous}
H=\sum_{\substack{|\alpha:\mathbf{m}|=1\\ |\beta:\mathbf{m}|=1}}D_{\mathbf{v}}^{\beta}\left\{a_{\alpha,\beta}(x)D_{\mathbf{v}}^{\alpha}\right\},
\end{equation}
then $C$ can taken to be $0$ and so $Q$ satisfies Hypotheses \ref{hyp:Garding} with reference operator $\Lambda$.
\end{proposition}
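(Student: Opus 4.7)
My plan is to decompose the argument into four steps: symmetry, the upper bound in Hypothesis \ref{hyp:Garding}, the lower bound (where the constant $C$ enters), and the domain/closedness assertion; the principal-terms-only case will then be a cosmetic simplification. Symmetry of $Q$ on $C_0^\infty(\Omega)$ is immediate from condition \ref{cond:hermitian} by relabeling $\alpha\leftrightarrow\beta$. For the upper bound, I use \ref{cond:meas} and Cauchy--Schwarz to obtain $|Q(f)|\le \Gamma\sum_{|\alpha:\mathbf{m}|,|\beta:\mathbf{m}|\le 1}\|D_\mathbf{v}^\alpha f\|_2\|D_\mathbf{v}^\beta f\|_2$, and then invoke Lemmas \ref{charsobolevbyfourierlem} and \ref{charsobolevbyfourierlem2} to conclude $|Q(f)|\le C'(Q_\Lambda(f)+\|f\|_2^2)$ on $C_0^\infty(\Omega)$.

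For the lower bound I will split $Q=Q_{\mathrm{pr}}+Q_{\mathrm{low}}$, where $Q_{\mathrm{pr}}$ collects the principal terms ($|\alpha:\mathbf{m}|=|\beta:\mathbf{m}|=1$) and $Q_{\mathrm{low}}$ the rest. Applying condition \ref{cond:compare} pointwise with $\eta_\alpha:=D_\mathbf{v}^\alpha f(x)$, integrating, and using Plancherel with the symbol identity $R(\xi)=\sum A_{\alpha,\beta}\xi^{\alpha+\beta}$ gives $Q_{\mathrm{pr}}(f)\ge \tfrac{3}{4}\,Q_\Lambda(f)$. The key remaining task is an interpolation estimate: for every multi-index $\alpha$ with $|\alpha:\mathbf{m}|<1$ and every $\epsilon>0$ there is $C_\epsilon$ with
\[
\|D_\mathbf{v}^\alpha f\|_2^{2}\le \epsilon\, Q_\Lambda(f)+C_\epsilon\|f\|_2^{2}.
\]
I will derive this from the dilation identities $(t^{E^*}\xi)^{2\alpha}=t^{|\alpha:\mathbf{m}|}\xi^{2\alpha}$ and $R(t^{E^*}\xi)=tR(\xi)$ induced by $E=E_\mathbf{v}^{2\mathbf{m}}\in\Exp(\Lambda)$, yielding $\xi^{2\alpha}\le C(1+R(\xi))^{|\alpha:\mathbf{m}|}$, followed by Young's inequality $s^\gamma\le \epsilon(1+s)+C_\epsilon$ for $\gamma\in[0,1)$. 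Cauchy--Schwarz applied to $Q_{\mathrm{low}}$ then yields $|Q_{\mathrm{low}}(f)|\le \tfrac{1}{4}Q_\Lambda(f)+C_0\|f\|_2^{2}$, and combining with the principal bound gives $(Q+C_0)(f)\ge \tfrac{1}{2}Q_\Lambda(f)$, which together with the upper bound verifies Hypothesis \ref{hyp:Garding} for $Q+C_0$.

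With the two-sided estimate $\tfrac{1}{2}Q_\Lambda(f)\le (Q+C_0)(f)\le C'(Q_\Lambda(f)+\|f\|_2^{2})$ in hand on $C_0^\infty(\Omega)$, Lemma \ref{charsobolevbyfourierlem2} implies that the form norm $\bigl(\|f\|_2^{2}+(Q+C_0)(f)\bigr)^{1/2}$ is equivalent to $\|\cdot\|_{W_\mathbf{v}^{\mathbf{m},2}(\Omega)}$ on $C_0^\infty(\Omega)$. Since $W_{\mathbf{v},0}^{\mathbf{m},2}(\Omega)$ is by definition the completion of $C_0^\infty(\Omega)$ in the latter norm, $Q$ extends by continuity to this space, is closed there, and has $\Dom(Q)=W_{\mathbf{v},0}^{\mathbf{m},2}(\Omega)$; inequality \eqref{eq:Garding} then holds throughout the common domain. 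The associated self-adjoint operator $H$ agrees formally with \eqref{eq:Hindivergenceform} by integration by parts on the core $C_0^\infty(\Omega)$.

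In the purely principal case $Q_{\mathrm{low}}\equiv 0$ and no interpolation constants intrude: $Q(f)\ge \tfrac{3}{4}Q_\Lambda(f)\ge \tfrac{1}{2}Q_\Lambda(f)$, while the upper bound is delivered by Cauchy--Schwarz on the (finite) collection of principal coefficients. Thus $C=0$ suffices and $Q$ itself satisfies Hypothesis \ref{hyp:Garding}. The only nontrivial ingredient in the argument is the interpolation inequality for $\|D_\mathbf{v}^\alpha f\|_2^{2}$ in terms of $Q_\Lambda$ when $|\alpha:\mathbf{m}|<1$; I expect this to be the main obstacle, though it is ultimately a direct Fourier-analytic consequence of the anisotropic scaling afforded by Proposition \ref{prop:OperatorRepresentation}.
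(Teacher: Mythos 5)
Your proposal is correct and follows essentially the same route as the paper: both arguments rest on (i) the pointwise use of Condition \ref{cond:compare} followed by Plancherel to control the principal part by $Q_\Lambda$, and (ii) an anisotropic interpolation estimate absorbing lower-order derivatives into $\epsilon Q_\Lambda(f)+C_\epsilon\|f\|_2^2$, which the paper packages as Item \ref{item:Scaling1} of Lemma \ref{lem:Scaling} and you rederive from the dilation scaling. The only cosmetic difference is that you obtain the upper bound directly from Condition \ref{cond:meas} and Cauchy--Schwarz rather than from Condition \ref{cond:compare} plus the lower-order estimate, but the two are interchangeable; note that when bounding $Q_{\mathrm{low}}$ you will also need the standard Young-type step to handle cross terms where one of $\alpha,\beta$ has $|\cdot:\mathbf{m}|=1$, exactly as the paper implicitly does.
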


\begin{proof}
For $f\in C_0^{\infty}(\Omega)$, observe that
\begin{multline*}
\frac{3}{4}Q_{\Lambda}(f)+\sum_{|\alpha+\beta:\mathbf{m}|<2}\int_{\Omega}a_{\alpha,\beta}D_{\mathbf{v}}^{\alpha}f\overline{D_{\mathbf{v}}^{\beta}f}dx\\
=\frac{3}{4}\sum_{\substack{|\alpha:\mathbf{m}|= 1\\|\beta:\mathbf{m}|= 1}}\int_{\Omega}A_{\alpha,\beta}D_{\mathbf{v}}^{\alpha}f\overline{D_{\mathbf{v}}^{\beta}f}dx+\sum_{|\alpha+\beta:\mathbf{m}|<2}\int_{\Omega}a_{\alpha,\beta}(x)D_{\mathbf{v}}^{\alpha}f\overline{D_{\mathbf{v}}^{\beta}f}dx\\
\leq \sum_{\substack{|\alpha:\mathbf{m}|\leq 1\\|\beta:\mathbf{m}|\leq 1}}\int_{\Omega}a_{\alpha,\beta}D_{\mathbf{v}}^{\alpha}f\overline{D_{\mathbf{v}}^{\beta}f}dx=Q(f)\\
\leq C\sum_{\substack{|\alpha:\mathbf{m}|= 1\\|\beta:\mathbf{m}|= 1}}\int_{\Omega}A_{\alpha,\beta}D_{\mathbf{v}}^{\alpha}f\overline{D_{\mathbf{v}}^{\beta}f}dx+\sum_{|\alpha+\beta:\mathbf{m}|<2}\int_{\Omega}a_{\alpha,\beta}D_{\mathbf{v}}^{\alpha}f\overline{D_{\mathbf{v}}^{\beta}f}dx\\
\leq C Q_{\Lambda}(f)+\sum_{|\alpha+\beta:\mathbf{m}|<2}\int_{\Omega}a_{\alpha,\beta}D_{\mathbf{v}}^{\alpha}f\overline{D_{\mathbf{v}}^{\beta}f}dx.
\end{multline*}
Thus
\begin{equation}\label{Hsatassump1propeq1}
\frac{3}{4}Q_{\Lambda}(f)+L(f)\leq Q(f)\leq C Q_{\Lambda}(f)+L(f)
\end{equation}
where we have put 
\begin{equation*}
L(f)=\sum_{|\alpha+\beta:\mathbf{m}|<2}\int_{\Omega}a_{\alpha,\beta}D_{\mathbf{v}}^{\alpha}f\overline{D_{\mathbf{v}}^{\beta}f}dx.
\end{equation*}
Using uniform bound on the coefficients $a_{\alpha,\beta}$ and Cauchy-Schwarz inequality we see that
\begin{equation*}
|L(f)|\leq C\sum_{|\alpha+\beta:\mathbf{m}|<2}\int_{\Omega}|D_{\mathbf{v}}^{\alpha}f||D_{\mathbf{v}}^{\beta}f|dx\\\leq C\sum_{|\alpha+\beta:\mathbf{m}|<2}\|D_{\mathbf{v}}^{\alpha}f\|_2\|D_{\mathbf{v}}^{\beta}f\|_2
\end{equation*}
for some $C>0$. For each multi-index $\gamma$ such that $|\gamma:\mathbf{m}|<1$, it follows from Item \ref{item:Scaling1} of Lemma \ref{lem:Scaling} that
\begin{multline*}
\|D_{\mathbf{v}}^{\gamma}f\|_2^2=\int_{\mathbb{V}^*}|\xi^{2\gamma}||\widehat{f_*}(\xi)|^2d\xi\leq \int_{\mathbb{V}^*}(\epsilon R(\xi)+M_{\epsilon})|\widehat{f_*}(\xi)|^2d\xi=\epsilon Q_{\Lambda}(f)+M_{\epsilon}\|f\|_2^2
\end{multline*}
where $\epsilon$ can be taken arbitrarily small. Taking into account all possible multi-indices appearing in $L$, we can produce a positive constant $M$ for which
\begin{equation}\label{Hsatassump1propeq2}
|L(f)|\leq \frac{1}{4} Q_{\Lambda}(f)+M\|f\|_2^2.
\end{equation}
By combining \eqref{Hsatassump1propeq1} and \eqref{Hsatassump1propeq2}, we obtain
\begin{eqnarray*}
\frac{1}{2}Q_{\Lambda}(f)&=&\frac{3}{4}Q_{\Lambda}(f)-\frac{1}{4}Q_{\Lambda}(f)\\
&\leq & Q(f)-L(f)-\frac{1}{4}Q_{\Lambda}(f)\\
&\leq & Q(f)+C\|f\|_2^2\\
&\leq& C_1Q_{\Lambda}(f)+C_2\|f\|_2^2
\end{eqnarray*}
from which the first assertion follows immediately. In the case that $H$ consists only of its principal terms, $L$ is identically $0$ and so the remaining assertion follows from \eqref{Hsatassump1propeq1} at once.
\end{proof}

\noindent To address Hypothesis \ref{hyp:FormCompare} we need to first introduce an appropriate class $\mathcal{E}$. For any integer $l\geq \max 2\kappa\mathbf{m}=\max\{2\kappa m_j:j=1,2,\dots,d\}$, put
\begin{equation*}
\mathcal{F}_l=\left\{\psi\in C_0^{\infty}(\mathbb{R}): \sup_{x\in\mathbb{R}}\left|\frac{d^j\psi}{dx^j}(x)\right|\leq 1\mbox{ for all }j=1,2,\dots,l\right\}
\end{equation*}
where $\kappa$ is that which appears in Hypothesis \ref{hyp:kappa}. We will take $\mathcal{E}$ to be the set of $\phi\in C_{\infty}^{\infty}(\mathbb{V},\mathbb{V})$ for which there are $\psi_1,\psi_2,\dots,\psi_d\in\mathcal{F}_l$ such that
\begin{equation}\label{definingphieq}
(\theta_{\mathbf{v}}\circ\phi\circ\theta_{\mathbf{v}}^{-1})(x_1,x_2,\dots,x_d)=(\psi_1(x_1),\psi_2(x_2),\dots,\psi_d(x_d))
\end{equation}
for all $(x_1,x_2,\dots,x_d)\in\mathbb{R}^d$.

\begin{remark}
What is important for us is that the $j^{th}$-coordinate function of $\theta_{\mathbf{v}}\circ\phi\circ\theta_{\mathbf{v}}^{-1}$ only depends on $x_j$ for each $j=1,2,\dots,d$. 
\end{remark}

\begin{remark}
The requirement that $l\geq \max 2\kappa\mathbf{m}$ is enough to ensure that Hypothesis \ref{hyp:FormCompare} (and later Hypothesis \ref{hyp:kappa}) holds uniformly for $\phi\in\mathcal{E}$. This, essentially, relies on the uniform boundedness of the derivatives of $\phi$ to sufficiently high order. In all statements to follow, we will assume without explicit mention that $l$ is sufficiently large to handle all derivatives under consideration.
\end{remark}

\begin{lemma}\label{lem:twistedderivative}
For each multi-index $\alpha>0$, there exists $C_{\alpha}>0$ such that for all $f\in \Dom(Q)$, $\phi\in\mathcal{E}$ and $\lambda\in\mathbb{V}^*$,
\begin{equation}
|e^{-\lambda(\phi(x))}D_{\mathbf{v}}^{\alpha}(e^{\lambda(\phi)}f)(x)-D_{\mathbf{v}}^{\alpha}f(x)|\leq C_{\alpha}\sum_{0<\beta\leq\alpha}\sum_{0<\gamma\leq\beta}|\lambda^{\gamma}||D_{\mathbf{v}}^{\alpha-\beta}f(x)|
\end{equation}
for almost every $x\in\mathbb{V}$.
\end{lemma}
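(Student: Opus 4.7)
The plan is to start from the Leibniz rule and peel off the $\beta=0$ term. For $f\in C_0^\infty(\mathbb{V})$ (which is dense in $\Dom(Q)$) and any $\phi\in\mathcal{E}$, $\lambda\in\mathbb{V}^*$, the product $e^{\lambda(\phi)}f$ is smooth, and
\[
D_{\mathbf{v}}^{\alpha}\bigl(e^{\lambda(\phi)}f\bigr)=\sum_{\beta\leq\alpha}\binom{\alpha}{\beta}\bigl(D_{\mathbf{v}}^{\beta}e^{\lambda(\phi)}\bigr)\bigl(D_{\mathbf{v}}^{\alpha-\beta}f\bigr).
\]
Dividing by $e^{\lambda(\phi)}$ and isolating the $\beta=0$ contribution, which is precisely $D_{\mathbf{v}}^{\alpha}f$, gives
\[
e^{-\lambda(\phi(x))}D_{\mathbf{v}}^{\alpha}\bigl(e^{\lambda(\phi)}f\bigr)(x)-D_{\mathbf{v}}^{\alpha}f(x)=\sum_{0<\beta\leq\alpha}\binom{\alpha}{\beta}\bigl[e^{-\lambda(\phi)}D_{\mathbf{v}}^{\beta}e^{\lambda(\phi)}\bigr](x)\,D_{\mathbf{v}}^{\alpha-\beta}f(x).
\]
So it remains to estimate the multiplier $e^{-\lambda(\phi(x))}D_{\mathbf{v}}^{\beta}e^{\lambda(\phi)}(x)$ by $C_\beta\sum_{0<\gamma\leq\beta}|\lambda^{\gamma}|$ uniformly in $x$ and $\phi\in\mathcal{E}$.

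To do this, I would use the defining structure of $\mathcal{E}$. In the coordinates $\phi_{\mathbf{v}}$, we have $\phi\circ\theta_{\mathbf{v}}^{-1}(x_1,\dots,x_d)=(\psi_1(x_1),\dots,\psi_d(x_d))$ with each $\psi_j\in\mathcal{F}_l$. Writing $\lambda=\sum_j\lambda_j v_j^{*}$ in the dual basis, this gives the separated form
\[
\lambda(\phi(x))=\sum_{j=1}^{d}\lambda_j\psi_j(x_j),\qquad e^{\lambda(\phi(x))}=\prod_{j=1}^{d}e^{\lambda_j\psi_j(x_j)},
\]
so that partial derivatives factor coordinate-wise:
\[
e^{-\lambda(\phi)}D_{\mathbf{v}}^{\beta}e^{\lambda(\phi)}(x)=\prod_{j=1}^{d}e^{-\lambda_j\psi_j(x_j)}\,(iD_{x_j})^{\beta_j}e^{\lambda_j\psi_j(x_j)}.
\]
For each single-variable factor (with $\beta_j\geq 1$) Faà di Bruno, or a direct induction on $\beta_j$, yields
\[
e^{-\lambda_j\psi_j(x_j)}\partial_{x_j}^{\beta_j}e^{\lambda_j\psi_j(x_j)}=\sum_{k=1}^{\beta_j}\lambda_j^{k}\,P_{\beta_j,k}\!\bigl(\psi_j'(x_j),\dots,\psi_j^{(\beta_j-k+1)}(x_j)\bigr),
\]
where $P_{\beta_j,k}$ is the partial Bell polynomial (a universal polynomial independent of $\phi$ and $\lambda$). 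Since $\psi_j\in\mathcal{F}_l$ with $l\geq\max 2\kappa\mathbf{m}\geq|\beta|$, each $|\psi_j^{(n)}|\leq 1$, so these polynomials are bounded in absolute value by a constant depending only on $\beta_j$. Taking absolute values and multiplying across $j$ produces
\[
\bigl|e^{-\lambda(\phi)}D_{\mathbf{v}}^{\beta}e^{\lambda(\phi)}(x)\bigr|\leq C_\beta\prod_{j:\beta_j>0}\sum_{k_j=1}^{\beta_j}|\lambda_j|^{k_j}\leq C_\beta\sum_{0<\gamma\leq\beta}|\lambda^{\gamma}|,
\]
which is the required pointwise bound, uniform in $\phi\in\mathcal{E}$ and $\lambda\in\mathbb{V}^{*}$.

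Inserting this estimate into the Leibniz expansion and absorbing the finitely many binomial coefficients into a new constant $C_\alpha$ yields the stated inequality for $f\in C_0^{\infty}(\mathbb{V})$. The extension to arbitrary $f\in\Dom(Q)=W_{\mathbf{v},0}^{\mathbf{m},2}(\Omega)$ is then obtained by approximating $f$ in $W_{\mathbf{v}}^{\mathbf{m},2}$ by a sequence of $C_0^{\infty}$ functions and passing to an almost-everywhere convergent subsequence on both sides (the inequality being preserved since the multipliers $e^{\pm\lambda(\phi)}$ act boundedly on $W_{\mathbf{v}}^{\mathbf{m},2}$, as already noted in the definition of $Q_{\lambda,\phi}$). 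The only mildly technical point is the bookkeeping in Faà di Bruno to see that the exponents $\gamma$ arising really lie in $(0,\beta]$; the separated structure imposed by $\mathcal{E}$ makes this transparent because each factor contributes independently in its own coordinate, so no mixed multi-indices $\gamma\not\leq\beta$ can occur.
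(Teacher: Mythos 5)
Your proof is correct and follows essentially the same route as the paper's: split off the $\beta=0$ term of the Leibniz expansion, exploit the separated coordinate form $\lambda(\phi(x))=\sum_j\lambda_j\psi_j(x_j)$ built into the definition of $\mathcal{E}$ so the multiplier factors coordinate-wise, and bound each one-variable factor using the uniform derivative bounds on $\psi_j\in\mathcal{F}_l$. The paper states the single-variable estimate without naming Bell polynomials or Fa\`a di Bruno and does not spell out the density extension from $C_0^\infty$ to $\Dom(Q)$, but both are implicit and your slightly more explicit treatment of these two points is the only difference.
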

\begin{proof}
In view of the coordinate charts $(\mathbb{V},\theta_{\mathbf{v}})$ and $(\mathbb{V}^*,\theta_{\mathbf{v}^*})$, we have
\begin{equation*}
\lambda(\phi(x))=(\lambda_1,\lambda_2,\dots,\lambda_d)\cdot(\psi_1(x_1),\psi_2(x_2),\dots,\psi_d(x_d))
\end{equation*}
for $x\in\mathbb{V}$ and $\lambda\in\mathbb{V}^*$ where $\theta_{\mathbf{v}}(x)=(x_1,x_2,\dots,x_d)$ and $\theta_{\mathbf{v}^*}(\lambda)=(\lambda_1,\lambda_2,\dots,\lambda_d)$. So for any multi-index $\beta>0$,
\begin{eqnarray*}
D_{\mathbf{v}}^{\beta}(e^{\lambda(\phi)})&=&\left(i\frac{\partial}{\partial x_1}\right)^{\beta_1}\left(i\frac{\partial}{\partial x_2}\right)^{\beta_2}\cdots\left(i\frac{\partial}{\partial x_d}\right)^{\beta_d}\left(e^{(\lambda_1,\lambda_2,\dots,\lambda_d)\cdot(\psi_1,\psi_2,\dots,\psi_d)}\right)\\
&=&\left(i^{\beta_1}\frac{\partial^{\beta_1}}{\partial x_1^{\beta_1}}e^{\lambda_1\psi_1}\right)\left(i^{\beta_2}\frac{\partial^{\beta_2}}{\partial x_2^{\beta_2}}e^{\lambda_2\psi_2}\right)\cdots\left(i^{\beta_d}\frac{\partial^{\beta_d}}{\partial x_d^{\beta_d}}e^{\lambda_d\psi_d}\right).
\end{eqnarray*}
Using the properties we have required for each $\psi_j$, it follows that
\begin{equation*}
|e^{-\lambda(\phi)}D_{\mathbf{v}}^{\beta}(e^{\lambda(\phi)})|\leq C_{\beta}\prod_{\beta_j\neq 0} \left(\sum_{l=1}^{\beta_j}|\lambda_j^{l}|\right)\leq C_{\beta}\sum_{0<\gamma\leq\beta}|\lambda^{\gamma}|
\end{equation*}
where $C_{\beta}>0$ is independent of $\phi$ and $\lambda$. In view of the Leibniz rule,
\begin{multline*}
\left|e^{-\lambda(\phi(x))}D_{\mathbf{v}}^{\alpha}\left(e^{\lambda(\phi)}f\right)(x)-D_{\mathbf{v}}^{\alpha}f(x)\right|\\
=\left|\sum_{0<\beta\leq\alpha}C_{\alpha,\beta}e^{-\lambda(\phi(x))}D_{\mathbf{v}}^{\beta}\left(e^{\lambda(\phi)}\right)(x)D_{\mathbf{v}}^{\alpha-\beta}f(x)\right|\\
\leq C_{\alpha}\sum_{0<\beta\leq\alpha}\sum_{0<\gamma\leq\beta}|\lambda^{\gamma}||D_{\mathbf{v}}^{\alpha-\beta}f(x)|.
\end{multline*}
for almost every $x\in\mathbb{V}$ where $C_{\alpha}$ is independent of $\lambda$ and $\phi$. The constants $C_{\alpha,\beta}$ appearing in the penultimate line are the standard multi-index combinations.
\end{proof}

\begin{proposition}\label{prop:SuperSatisfiesHypothesis2}
With respect to the class $\mathcal{E}$ above, $Q$ (and so $Q+C$) satisfies Hypothesis \ref{hyp:FormCompare}. 
\end{proposition}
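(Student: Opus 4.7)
The plan is to show Hypothesis \ref{hyp:FormCompare} by expanding the twisted form $Q_{\lambda,\phi}(f)$ via Leibniz and bounding each cross-term that arises by an interpolation inequality against $Q(f)$ and $(1+R(\lambda))\|f\|_2^2$. Concretely, write $g_\alpha = e^{\lambda(\phi)}D_{\mathbf{v}}^{\alpha}(e^{-\lambda(\phi)}f)$ and $h_\beta = e^{-\lambda(\phi)}D_{\mathbf{v}}^{\beta}(e^{\lambda(\phi)}f)$, so that
\begin{equation*}
Q_{\lambda,\phi}(f) - Q(f) = \sum_{|\alpha:\mathbf{m}|,|\beta:\mathbf{m}|\leq 1}\int_{\Omega}a_{\alpha,\beta}\bigl[(g_\alpha - D_{\mathbf{v}}^{\alpha}f)\overline{h_\beta} + D_{\mathbf{v}}^{\alpha}f\,\overline{(h_\beta - D_{\mathbf{v}}^{\beta}f)}\bigr]\,dx.
\end{equation*}
Using Lemma \ref{lem:twistedderivative}, after another decomposition of $h_\beta$ into $D_{\mathbf{v}}^{\beta}f$ plus a remainder, the integrand is a finite linear combination of products of the form $|\lambda^{\tau}|\,|D_{\mathbf{v}}^{\alpha''}f|\,|D_{\mathbf{v}}^{\beta''}f|$ with $\tau>0$ and with the budget constraint
\begin{equation*}
|\tau:\mathbf{m}| + |\alpha'':\mathbf{m}| + |\beta'':\mathbf{m}| \leq |\alpha:\mathbf{m}|+|\beta:\mathbf{m}| \leq 2.
\end{equation*}
This bookkeeping is immediate from the Lemma \ref{lem:twistedderivative} bound $\tau\leq\alpha'$ (or the analogous $\tau\leq\beta'$), which gives $|\tau:\mathbf{m}|\leq|\alpha':\mathbf{m}|$, combined with $\alpha''=\alpha-\alpha'$ and $\beta''=\beta$ (or their $\beta$-side analogues, or both in the mixed remainder term).

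The main technical input is the following interpolation inequality: for each triple $(\tau,\alpha'',\beta'')$ with $\tau>0$ and $|\tau:\mathbf{m}|+|\alpha'':\mathbf{m}|+|\beta'':\mathbf{m}|\leq 2$, and every $\epsilon>0$, there exists $C_\epsilon>0$ with
\begin{equation*}
|\lambda^{\tau}|\,\|D_{\mathbf{v}}^{\alpha''}f\|_2\,\|D_{\mathbf{v}}^{\beta''}f\|_2 \leq \epsilon\bigl(Q_\Lambda(f)+\|f\|_2^2\bigr) + C_\epsilon(1+R(\lambda))\|f\|_2^2
\end{equation*}
for all $f\in\Dom(Q)$ and $\lambda\in\mathbb{V}^*$. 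To prove it I would split $|\lambda^\tau| = |\lambda^\tau|^{s}|\lambda^\tau|^{1-s}$ with $s\in[0,1]$ chosen so that $2s|\tau:2\mathbf{m}| + |\alpha'':\mathbf{m}|\leq 1$ and $2(1-s)|\tau:2\mathbf{m}| + |\beta'':\mathbf{m}|\leq 1$ (possible because the two constraints sum to $|\tau:\mathbf{m}|+|\alpha'':\mathbf{m}|+|\beta'':\mathbf{m}|\leq 2$), apply AM-GM to split the product, pass to the Fourier side, use Lemma \ref{lem:Scaling} to bound the monomials $|\lambda^\tau|^{2s}$ by $(1+R(\lambda))^{p}$ and $|\xi^{\alpha''}|^2$ by $(1+R(\xi))^{q}$ with $p+q\leq 1$, and then apply Young's inequality $a^{p}b^{q}\leq \epsilon b + C_\epsilon a^{p/(1-q)}$ together with the bound $(1+R(\lambda))^{p/(1-q)}\leq 1+R(\lambda)$ which is valid since $p/(1-q)\leq 1$.

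Once the interpolation estimate is in hand, summing over the finitely many $(\alpha,\beta,\alpha',\beta',\tau)$ appearing in the expansion yields
\begin{equation*}
|Q_{\lambda,\phi}(f)-Q(f)| \leq C\epsilon\bigl(Q_\Lambda(f)+\|f\|_2^2\bigr) + C'_\epsilon(1+R(\lambda))\|f\|_2^2.
\end{equation*}
Invoking the Gårding-type bound $Q_\Lambda(f)\leq 2Q(f)+ C_0\|f\|_2^2$ from Proposition \ref{prop:SuperSatisfiesHypothesis1} (applied to $Q+C$ if necessary) to replace $Q_\Lambda$ by $Q$, and choosing $\epsilon>0$ small enough so that the coefficient of $Q(f)$ becomes $\leq 1/4$, absorbs the remaining $\|f\|_2^2$-terms into a constant multiple of $(1+R(\lambda))\|f\|_2^2$ and delivers the form comparison inequality \eqref{eq:FormCompare1} with an appropriate $M$; this constant is manifestly independent of $\phi\in\mathcal{E}$ since the constants $C_\beta$ in Lemma \ref{lem:twistedderivative} depend only on derivatives of $\phi$ up to order $\max 2\kappa\mathbf{m}$, which are uniformly bounded by the definition of $\mathcal{F}_l$. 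The only delicate point is the bookkeeping that verifies the budget $|\tau:\mathbf{m}|+|\alpha'':\mathbf{m}|+|\beta'':\mathbf{m}|\leq 2$ for \emph{every} term of the expansion (including the doubly-twisted remainder, where two $\lambda$-factors combine and one must use $|\tau+\tau':\mathbf{m}|\leq |\alpha':\mathbf{m}|+|\beta':\mathbf{m}|$); I expect this to be the main place where careful accounting is needed, but it follows in each case from $|\alpha:\mathbf{m}|+|\beta:\mathbf{m}|\leq 2$ by routine multi-index arithmetic.
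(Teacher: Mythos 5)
Your plan follows the paper's strategy for condition (ii) of Hypothesis \ref{hyp:FormCompare}, but it omits condition (i) entirely: you must also produce, for each pair $x,y\in\Omega$, some $\phi\in\mathcal{E}$ with $\phi(x)-\phi(y)=x-y$. This requires an explicit construction — for each coordinate $i$, take $\psi_i\in\mathcal{F}_l$ with $\psi_i(x_i)=x_i$ and $\psi_i(y_i)=y_i$ (a smooth cutoff of the identity whose derivatives stay bounded by $1$) and assemble $\phi$ via \eqref{definingphieq} — and your proposal says nothing about it.

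Second, the derivation of your interpolation inequality does not quite close in the saturated case. After the non-integer split $|\lambda^\tau|=|\lambda^\tau|^s|\lambda^\tau|^{1-s}$ and a plain $\frac{1}{2}+\frac{1}{2}$ AM-GM, the step $a^pb^q\leq\epsilon b+C_\epsilon a^{p/(1-q)}$ requires $q<1$; but your budget permits, for instance, $|\alpha'':\mathbf{m}|=1$, which forces $s=0$, $p=0$, $q=1$, so Young yields no $\epsilon$ and that half of the AM-GM contributes a fixed (not $\epsilon$-small) multiple of $Q_\Lambda(f)+\|f\|_2^2$. You also apply Lemma \ref{lem:Scaling} to the non-integer power $|\lambda^\tau|^{2s}$, which the lemma as stated does not cover. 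The inequality you assert is true, but the clean proof is the paper's asymmetric bookkeeping: keep $\lambda^{\eta_\alpha}$ and $\lambda^{\eta_\beta}$ separate, note that every term has $\gamma_\beta>0$ hence $|\beta-\gamma_\beta:\mathbf{m}|<1$, use item (2b) of Lemma \ref{lem:Scaling} to get $\|\lambda^{\eta_\beta}D_{\mathbf{v}}^{\beta-\gamma_\beta}f\|_2^2\leq\epsilon Q_\Lambda(f)+M_\epsilon(1+R(\lambda))\|f\|_2^2$, bound the $\alpha$-side crudely by $M(Q(f)+(1+R(\lambda))\|f\|_2^2)$, and transfer the $\epsilon$ to the product via the weighted Cauchy inequality $XY\leq\delta X^2+(4\delta)^{-1}Y^2$. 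In your symmetric framing the repair is: $\tau>0$ together with the budget forces at least one of $|\alpha'':\mathbf{m}|$, $|\beta'':\mathbf{m}|$ to be strictly less than $1$; move all of $\lambda^\tau$ onto that subcritical factor and use the weighted Cauchy inequality rather than $\frac{1}{2}+\frac{1}{2}$ AM-GM.
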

\begin{proof}
Let $x,y\in\mathbb{V}$ and set $(x_1,x_2,\dots,x_d)=\theta_{\mathbf{v}}(x)$ and $(y_1,y_2,\dots,y_d)=\theta_{\mathbf{v}}(y)$. For each pair $x_i,y_i\in\mathbb{R}$ there is $\psi_i\in\mathcal{F}_l$ for which $\psi_i(x_i)=x_i$ and $\psi_i(y_i)=y_i$; such functions can be found by smoothly cutting off the identity while keeping derivatives bounded appropriately. Using this collection of $\psi_i$'s, we define $\phi$ as in \eqref{definingphieq} and note that
\begin{eqnarray*}
\lefteqn{\hspace{-0.75cm}\phi(x)-\phi(y)}\\
\hspace{.75cm}&=&\theta_{\mathbf{v}}^{-1}(\psi_1(x_1),\psi_2(x_2),\dots,\psi_d(x_d))-\theta_{\mathbf{v}}^{-1}(\psi_1(y_1),\psi_2(y_2),\dots,\psi_d(y_d))\\
&=&\theta_{\mathbf{v}}^{-1}(x_1,x_2,\dots,x_d)-\theta_{\mathbf{v}}^{-1}(y_1,y_2,\dots,y_d)\\
&=&x-y
\end{eqnarray*}
as required.

For any $\lambda\in\mathbb{V}^*$, $\phi\in\mathcal{E}$ and $f\in\Dom(Q)$,
\begin{equation*}
Q_{\lambda,\phi}(f)=\sum_{\substack{|\alpha:\mathbf{m}|\leq 1\\|\beta:\mathbf{m}|\leq 1}}\int_{\Omega}a_{\alpha,\beta}(x)D_{\mathbf{v}}^{\alpha}(e^{-\lambda(\phi)}f)(x)\overline{D_{\mathbf{v}}^{\beta}(e^{\lambda(\phi)}f)(x)}dx.
\end{equation*}
Using the uniform boundedness of the collection $\{a_{\alpha,\beta}\}$, we have
\begin{multline*}
|Q_{\lambda,\phi}(f)-Q(f)|\\
=\Big|\sum_{\substack{0<|\alpha:\mathbf{m}|\leq 1\\0<|\beta:\mathbf{m}|\leq 1}}\int_{\Omega}a_{\alpha,\beta}\Big[e^{\lambda(\phi)}D_{\mathbf{v}}^{\alpha}(e^{-\lambda(\phi)}f)\overline{e^{-\lambda(\phi)}D_{\mathbf{v}}^{\beta}(e^{\lambda(\phi)}f)}-D_{\mathbf{v}}^{\alpha}f\overline{D_{\mathbf{v}}^{\beta}f}\Big]dx\Big|\\
=\Big|\sum_{\substack{0<|\alpha:\mathbf{m}|\leq 1\\0<|\beta:\mathbf{m}|\leq 1}}\int_{\Omega}a_{\alpha,\beta}\Big[\left(e^{\lambda(\phi)}D_{\mathbf{v}}^{\alpha}(e^{-\lambda(\phi)}f)-D_{\mathbf{v}}^{\alpha}f\right)\overline{e^{-\lambda(\phi)}D_{\mathbf{v}}^{\beta}(e^{\lambda(\phi)}f)}\\
+D_{\mathbf{v}}^{\alpha}f\left(\overline{e^{-\lambda(\phi)}D_{\mathbf{v}}^{\beta}(e^{\lambda(\phi)}f)-D_{\mathbf{v}}^{\beta}f}\right)\Big]dx\Big|\\
\leq C\sum_{\substack{0<|\alpha:\mathbf{m}|\leq 1\\0<|\beta:\mathbf{m}|\leq 1}}\int_{\Omega}|e^{\lambda(\phi)}D_{\mathbf{v}}^{\alpha}(e^{-\lambda(\phi)}f)-D_{\mathbf{v}}^{\alpha}f||e^{-\lambda(\phi)}D_{\mathbf{v}}^{\beta}(e^{\lambda(\phi)}f)|\\
+|D_{\mathbf{v}}^{\alpha}f||e^{-\lambda(\phi)}D_{\mathbf{v}}^{\beta}(e^{\lambda(\phi)}f)-D_{\mathbf{v}}^{\beta}f|dx\\
\leq C\sum_{\substack{0<|\alpha:\mathbf{m}|\leq 1\\0<|\beta:\mathbf{m}|\leq 1}}\int_{\Omega}|e^{\lambda(\phi)}D_{\mathbf{v}}^{\alpha}(e^{-\lambda(\phi)}f)-D_{\mathbf{v}}^{\alpha}f||e^{-\lambda(\phi)}D_{\mathbf{v}}^{\beta}(e^{\lambda(\phi)}f)-D_{\mathbf{v}}^{\beta}f|\\
+|D_{\mathbf{v}}^{\alpha}f||e^{-\lambda(\phi)}D_{\mathbf{v}}^{\beta}(e^{\lambda(\phi)}f)-D_{\mathbf{v}}^{\beta}f|dx.
\end{multline*}
With the help of Lemma \ref{lem:twistedderivative},
\begin{multline*}
|Q_{\lambda,\phi}(f)-Q(f)|\\
\leq C\sum_{\substack{0<|\alpha:\mathbf{m}|
\leq 1\\0<|\beta:\mathbf{m}| \leq 1}}\sum_{\substack{0<\gamma_{\alpha}\leq\alpha\\0<\gamma_{\beta}\leq\beta}}\sum_{\substack{0<\eta_{\alpha}\leq\gamma_{\alpha}\\0<\eta_{\beta}\leq\gamma_{\beta}}}\int_{\Omega}|\lambda^{\eta_{\alpha}}||D_{\mathbf{v}}^{\alpha-\gamma_{\alpha}}f||\lambda^{\eta_{\beta}}||D_{\mathbf{v}}^{\beta-\gamma_{\beta}}f|dx\\
+C\sum_{\substack{0<|\alpha:\mathbf{m}|\leq 1\\0<|\beta:\mathbf{m}|\leq 1}}\sum_{0<\gamma_{\beta}\leq\beta}\sum_{0<\eta_{\beta}\leq\gamma_{\beta}}\int_{\Omega}|D_{\mathbf{v}}^{\alpha}f||\lambda^{\eta_{\beta}}||D_{\mathbf{v}}^{\beta-\gamma_{\beta}}f|dx\\
\leq C\sum_{\substack{0<|\alpha:\mathbf{m}|\leq 1\\0<|\beta:\mathbf{m}|\leq 1}}\sum_{\substack{0\leq\gamma_{\alpha}\leq\alpha\\0<\gamma_{\beta}\leq\beta}}\sum_{\substack{0\leq\eta_{\alpha}\leq\gamma_{\alpha}\\0<\eta_{\beta}\leq\gamma_{\beta}}}\int_{\Omega}|\lambda^{\eta_{\alpha}}D_{\mathbf{v}}^{\alpha-\gamma_{\alpha}}f||\lambda^{\eta_{\beta}}D_{\mathbf{v}}^{\beta-\gamma_{\beta}}f| dx\\
\end{multline*}
where $C>0$ is independent of $\phi,\lambda$ and $f$. Thus by the Cauchy-Schwarz inequality,
\begin{equation}\label{Hsatassumpprop2eq}
|Q_{\lambda,\phi}(f)-Q(f)|\leq C\sum_{\substack{0<|\alpha:\mathbf{m}|\leq 1\\0<|\beta:\mathbf{m}|\leq 1}}\sum_{\substack{0\leq\gamma_{\alpha}\leq\alpha\\0<\gamma_{\beta}\leq\beta}}\sum_{\substack{0\leq\eta_{\alpha}\leq\gamma_{\alpha}\\0<\eta_{\beta}\leq\gamma_{\beta}}}\|\lambda^{\eta_{\alpha}}D_{\mathbf{v}}^{\alpha-\gamma_{\alpha}}f\|_2\|\lambda^{\eta_{\beta}}D_{\mathbf{v}}^{\beta-\gamma_{\beta}}f\|_2\\.
\end{equation}
It is important to note that for no such summand is $|\beta-{\gamma_{\beta}}:\mathbf{m}|=1$. In view of Lemma \ref{lem:Scaling} and Proposition \ref{prop:SuperSatisfiesHypothesis1} it follows that for all such $\beta$, $\gamma_{\beta}$ and $\eta_{\beta}$,
\begin{eqnarray*}
\|\lambda^{\eta_{\beta}}D_{\mathbf{v}}^{\beta-\gamma_{\beta}}f\|_2^2&=&\int_{\mathbb{V}^*}|\lambda^{2\eta_{\beta}}\xi^{2(\beta-\gamma_{\beta})}||\widehat{f_*}(\xi)|^2d\xi\\
&\leq& \epsilon \int_{\mathbb{V}^*}R(\xi)|\widehat{f_*}(\xi)|^2d\xi+M_{\epsilon}(1+R(\lambda))\|f\|_2^2\\
&\leq& \epsilon Q_{\Lambda}(f)+M_{\epsilon}(1+R(\lambda))\|f\|_2^2\\
&\leq& \epsilon Q(f)+M(1+R(\lambda))\|f\|_2^2
\end{eqnarray*}
where $\epsilon$ can be taken arbitrarily small. For all admissible $\alpha$, $\gamma_{\alpha}$ and $\eta_{\alpha}$, a similar calculation (making use of Lemma \ref{lem:Scaling} and Proposition \ref{prop:SuperSatisfiesHypothesis1}) shows that
\begin{equation*}
\|\lambda^{\eta_{\alpha}}D_{\mathbf{v}}^{\alpha-\gamma_{\alpha}}f\|_2^2\leq M(Q(f)+(1+R(\lambda))\|f\|_2^2)
\end{equation*}
for some $M>0$. Thus for any $\epsilon>0$, each summand in \eqref{Hsatassumpprop2eq} satisfies
\begin{eqnarray*}
\lefteqn{\|\lambda^{\eta_{\alpha}}D_{\mathbf{v}}^{\alpha-\gamma_{\alpha}}f\|_2\|\lambda^{\eta_{\beta}}D_{\mathbf{v}}^{\beta-\gamma_{\beta}}f\|_2}\\
&\leq& (M(Q(f)+(1+R(\lambda))\|f\|_2^2))^{1/2}(\epsilon Q(f)+M(1+R(\lambda))\|f\|_2^2)^{1/2}\\ 
&\leq& (\epsilon M)^{1/2} Q(f)+\frac{M^{3/2}}{\epsilon^{1/2}}(1+R(\lambda))\|f\|_2^2.
\end{eqnarray*}
The result now follows by choosing $\epsilon$ appropriately and combining these estimates. 
\end{proof}

\subsection{When $\mu_{\Lambda}=|\mathbf{1}:2\mathbf{m}|<1$}

Let $Q$ be a $\{2\mathbf{m},\mathbf{v}\}$-super-semi-elliptic form on $L^2(\Omega)$ with reference operator $\Lambda$ (with symbol $R$ and homogeneous order $\mu_{\Lambda}$) and associated super-semi-elliptic operator $H$. Throughout this subsection we investigate the case in which 
\begin{equation*}
\mu_{\Lambda}=|\mathbf{1}:2\mathbf{m}|=\sum_{j=1}^d \frac{1}{2m_j}<1.
\end{equation*}
In view of Propositions \ref{prop:kappa}, \ref{prop:SuperSatisfiesHypothesis1} and \ref{prop:SuperSatisfiesHypothesis2}, the sesquilinear form $Q+C$ satisfies Hypotheses \ref{hyp:Garding}, \ref{hyp:FormCompare} and \ref{hyp:kappa}. Upon noting that the semigroup generated by $-H$ and that generated by $-(H+C)$ are related by $e^{-t(H+C)}=e^{-tC}e^{-tH}$, the results of Section \ref{sec:KernelRegularity} immediately give us the following proposition.

\begin{proposition}\label{prop:Super}
Let $Q$ be a $\{2\mathbf{m},\mathbf{v}\}$-super-semi-elliptic form on $L^2(\Omega)$ with reference operator $\Lambda$ and associated self-adjoint super-semi-elliptic operator $H$. Let $R$ be the symbol and $\mu_{\Lambda}=|\mathbf{1}:2\mathbf{m}|$ be the homogeneous order of $\Lambda$, respectively. If $\mu_{\Lambda}<1$, then the semigroup $T_z=e^{-zH}$ has integral kernel $K_H:\mathbb{C}_+\times\Omega\times\Omega\rightarrow\mathbb{C}$ for which
\begin{equation*}
\left(e^{-zH}f\right)(x)=\int_\Omega K_H(z,x,y)f(y)\,dy
\end{equation*}
for all $f\in L^1(\Omega)\cap L^2(\Omega)$. For fixed $z$, $K_H(z,\cdot,\cdot)$ is jointly H\"{o}lder continuous of order $\alpha=(1-\mu_{\Lambda})/2$. For fixed $x,y\in\Omega$, $z\mapsto K_H(z,x,y)$ is analytic on $\mathbb{C}_+$. Finally, there are constants $C>0$ and $M\geq 0$ for which
\begin{equation*}
|K_H(t,x,y)|\leq \frac{C}{t^{\mu_{\Lambda}}}exp\left(-tMR^{\#}\left(\frac{x-y}{t}\right)+Mt\right)
\end{equation*}
for all $x,y\in\Omega$ and $t>0$ where $R^{\#}$ is the Legendre-Fenchel transform of $R$.
\end{proposition}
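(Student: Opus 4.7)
The plan is to reduce the statement directly to Theorem~\ref{thm:MainMeasurable} applied to a suitable shift of $H$. By Proposition~\ref{prop:SuperSatisfiesHypothesis1}, there exists a constant $C \geq 0$ such that the shifted form $Q+C$ is closed and satisfies Hypothesis~\ref{hyp:Garding} with reference operator $\Lambda$; by Proposition~\ref{prop:SuperSatisfiesHypothesis2}, $Q+C$ also satisfies Hypothesis~\ref{hyp:FormCompare} with respect to the class $\mathcal{E}$ introduced above (the $+C$ shift does not affect the difference $Q_{\lambda,\phi}(f)-Q(f)$). Since $\mu_{\Lambda}<1$, Proposition~\ref{prop:kappa} tells us that Hypothesis~\ref{hyp:kappa} is automatically satisfied. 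The self-adjoint operator associated to $Q+C$ is precisely $H+C$, so all three fundamental hypotheses are in force for this operator.

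Theorem~\ref{thm:MainMeasurable} therefore produces a function $K_{H+C}:\mathbb{C}_+\times \Omega\times\Omega\to\mathbb{C}$ that is the integral kernel of $e^{-z(H+C)}$, is jointly H\"older continuous of order $\alpha=(1-\mu_\Lambda)/2$ in $(x,y)$ for each fixed $z$, is analytic in $z$ for each fixed $(x,y)$, and satisfies an off-diagonal bound with some positive constants $C'$ and $M'$.

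To transfer these properties to $H$, note that $H$ and $H+C$ commute and differ by the bounded multiplication operator $C I$; hence
\begin{equation*}
e^{-zH} = e^{zC}\, e^{-z(H+C)} \qquad (z\in\mathbb{C}_+),
\end{equation*}
and therefore $e^{-zH}$ has an integral kernel given by
\begin{equation*}
K_H(z,x,y) = e^{zC}\, K_{H+C}(z,x,y).
\end{equation*}
The scalar multiplier $e^{zC}$ preserves joint H\"older continuity in $(x,y)$ for fixed $z$, is analytic in $z$ for fixed $(x,y)$, and merely replaces the constant $M'$ by $M:=M'+C$ in the off-diagonal bound, since
\begin{equation*}
e^{tC}\cdot\frac{C'}{t^{\mu_\Lambda}}\exp\!\left(-tM'R^\#\!\left(\tfrac{x-y}{t}\right)+M't\right)
= \frac{C'}{t^{\mu_\Lambda}}\exp\!\left(-tM'R^\#\!\left(\tfrac{x-y}{t}\right)+Mt\right).
\end{equation*}

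There is no substantive obstacle here: the genuine work was done in establishing the abstract off-diagonal machinery (Theorem~\ref{thm:MainMeasurable}) and in verifying Hypotheses~\ref{hyp:Garding} and~\ref{hyp:FormCompare} for super-semi-elliptic forms (Propositions~\ref{prop:SuperSatisfiesHypothesis1} and~\ref{prop:SuperSatisfiesHypothesis2}). The present proposition is a direct assembly of these ingredients, together with the trivial scalar-shift argument above. The one detail that deserves mild attention is that the class $\mathcal{E}$ constructed in the preceding subsection was defined precisely so that Hypothesis~\ref{hyp:FormCompare} (and when needed Hypothesis~\ref{hyp:kappa}) holds uniformly in $\phi\in\mathcal{E}$; here we use only Hypothesis~\ref{hyp:FormCompare}, for which the choice $l\geq \max 2\kappa\mathbf{m}$ is more than sufficient.
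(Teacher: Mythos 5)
Your proposal is correct and follows essentially the same route as the paper: apply Propositions~\ref{prop:SuperSatisfiesHypothesis1}, \ref{prop:SuperSatisfiesHypothesis2}, and \ref{prop:kappa} to the shifted form $Q+C$, invoke Theorem~\ref{thm:MainMeasurable}, and transfer back to $H$ via $e^{-zH}=e^{zC}e^{-z(H+C)}$. The paper states this exact argument in the paragraph immediately preceding the proposition.
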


\noindent Let us now focus on the special case in which $\Omega=\mathbb{V}$ and the super-semi-elliptic form $Q$ (and $H$) consist only of principle terms, i.e.,
\begin{equation*}
Q(f,g)=\sum_{\substack{|\alpha:\mathbf{m}|=1\\|\beta:\mathbf{m}|=1}}\int_{\mathbb{V}}a_{\alpha,\beta}(x)D_{\mathbf{v}}^\alpha f(x)\overline{D_{\mathbf{v}}^{\beta}g(x)}\,dx,
\end{equation*}
or, equivalently, $H$ is the form \eqref{eq:HDivergenceFormHomogeneous}. We will continue to assume that $\mu_{\Lambda}=|\mathbf{1}:2\mathbf{m}|<1$. In the notation of Section \ref{sec:HHomogeneous}, we observe that
\begin{eqnarray*}
Q^s(f,g)&=&s^{-1}Q(U_s f,U_s g)\\
&=&s^{-1}\sum_{\substack{|\alpha:\mathbf{m}|=1\\|\beta:\mathbf{m}|=1}}\int_{\mathbb{V}}a_{\alpha,\beta}(x)D_{\mathbf{v}}^{\alpha}(s^{\mu_{\Lambda}/2}f_s)(x)\overline{D_{\mathbf{v}}^{\alpha}(s^{\mu_{\Lambda}/2}g_s)(x)}\,dx\\
&=&s^{-1}s^{\mu_{\Lambda}}\sum_{\substack{|\alpha:\mathbf{m}|=1\\|\beta:\mathbf{m}|=1}}\int_{\mathbb{V}}a_{\alpha,\beta}(x)D_{\mathbf{v}}^{\alpha}(f_s)(x)\overline{D_{\mathbf{v}}^{\alpha}(g_s)(x)}\,dx
\end{eqnarray*} 
for $f,g\in \Dom(Q)$ where by $f_s$ (and $g_s$) is defined by $f_s(x)=f(s^Ex)$ for $s>0$ and $x\in\mathbb{V}$. Noting the definition of $E$ at the the beginning of the section, for each multi-index $\gamma$ such that $|\gamma:\mathbf{m}|=1$,
\begin{equation*}
D_{\mathbf{v}}^{\gamma}f_s(x)=s^{|\gamma:2\mathbf{m}|}(D_{\mathbf{v}}^{\gamma}f)(s^Ex)=s^{1/2}(D_{\mathbf{v}}^{\gamma}f)(s^Ex).
\end{equation*}
Therefore, by a change of variables, we obtain
\begin{eqnarray*}
Q^s(f,g)&=&s^{\mu_{\Lambda}}\sum_{\substack{|\alpha:\mathbf{m}|=1\\|\beta:\mathbf{m}|=1}}\int_{\mathbb{V}}a_{\alpha,\beta}(x)D_{\mathbf{v}}^{\alpha}f(s^Ex)\overline{D_{\mathbf{v}}^{\alpha}f(s^Ex)}\,dx\\
&=&\sum_{\substack{|\alpha:\mathbf{m}|=1\\|\beta:\mathbf{m}|=1}}\int_{\mathbb{V}}a_{\alpha,\beta}(s^{-E}x)D_{\mathbf{v}}^{\alpha}f(x)\overline{D_{\mathbf{v}}^{\alpha}f(x)}\,dx
\end{eqnarray*}
for all $f,g\in \Dom(Q)$. Under our assumption that $Q$ is super-semi-elliptic, all estimates concerning $a_{\alpha,\beta}$ hold uniformly for $x\in\mathbb{V}$, and we may therefore conclude that the associated self-adjoint operator $H$ is homogeneous in the sense of in the sense of Section \ref{sec:HHomogeneous}. Consequently,  an appeal to Theorem \ref{thm:HHomogeneous} guarantees that the heat kernel $K_H$ satisfies
\begin{equation*}
|K_H(t,x,y)|\leq \frac{C}{t^{\mu_{\Lambda}}}\exp\left(-tMR^{\#}\left(\frac{x-y}{t}\right)\right)
\end{equation*}
for all $t>0$ and $x,y\in \mathbb{V}$ where $C$ and $M$ are positive constants. \\

\subsection{When $\mu_{\Lambda}=|\mathbf{1},2\mathbf{m}|\geq 1$}

\noindent In the last subsection, we deduced heat kernel estimates for $\{2\mathbf{m},\mathbf{v}\}$-super-semi-elliptic operators in the case that $\mu_{\Lambda}=|\mathbf{1}:2\mathbf{m}|<1$; in this setting Hypothesis \ref{hyp:kappa} was met trivially by virtue of Proposition \ref{prop:kappa}. In general, we expect these results to also be valid in the case that $\mu_{\Lambda}=|\mathbf{1}:2\mathbf{m}|=1$ (by the methods of \cite{Auscher1998} and \cite{terElst1997}); however we do not pursue this here. As discussed in the introduction, without additional assumptions on the regularity of the coefficients, these results cannot be pushed into the realm in which $\mu_{\Lambda}=|\mathbf{1}:2\mathbf{m}|>1$. For an account of the relevant counterexamples which pertain to elliptic operators with measureable coefficients, we encourage the reader to see \cite{Davies1997a,deGiorgi1968,Mazya1968}; further discussion can be found in Section 4.1 of \cite{Davies1997}.\\

\noindent We here investigate the situation in which a $\{2\mathbf{m},\mathbf{v}\}$-super-semi-elliptic form $Q$ has $\mu_{\Lambda}=|\mathbf{1}:2\mathbf{m}|$ unrestricted (allowing for $\mu_{\Lambda}\geq 1$). In this situation, it is possible that $\kappa>1$ and so Hypothesis \ref{hyp:kappa} does not, in general, follow from Proposition \ref{prop:kappa}. We must therefore verify the hypothesis directly. In line with the remarks of the previous paragraph, we shall make some additional (strong) assumptions concerning the regularity of the coefficients $\{a_{\alpha,\beta}\}$ under which the verification of Hypothesis \ref{hyp:kappa} is relatively straightforward. 

To this end, let $Q$ be a $\{2\mathbf{m},\mathbf{v}\}$-super-semi-elliptic form on $L^2(\mathbb{V})$ with coefficients $\{a_{\alpha,\beta}\}$. In addition to Conditions \ref{cond:meas}, \ref{cond:hermitian} and \ref{cond:compare}, we ask that the following two conditions are satisfied:
\begin{enumerate}[label=(C.\arabic*)]
\setcounter{enumi}{3}
\item\label{cond:smooth} \begin{equation*}\{a_{\alpha,\beta}(\cdot)\}_{\substack{|\alpha:\mathbf{m}|\leq 1\\ |\beta:\mathbf{m}|\leq 1}}\subseteq C^{\infty}(\mathbb{V})
\end{equation*}
\item\label{cond:constanttopsymbol} For each pair of multi-indices $\alpha$ and $\beta$ for which $|\alpha:\mathbf{m}|=|\beta:\mathbf{m}|=1$, the function $a_{\alpha,\beta}(\cdot)$ is identically constant.
\end{enumerate}

\noindent In view of Conditions \ref{cond:compare} and \ref{cond:constanttopsymbol}, we may assume without loss of generality that the principal part of $Q$ is given by $\Lambda$. In other words, we assume that $a_{\alpha,\beta}=A_{\alpha,\beta}\in\mathbb{R}$ for each  $\alpha$ and $\beta$ for which $|\alpha:\mathbf{m}|=|\beta:\mathbf{m}|=1$. This allows us to write
\begin{equation*}
Q(f,g)=\langle \Lambda f,g\rangle+L(f,g)
\end{equation*}
for all $f,g\in C_0^{\infty}(\mathbb{V})$ where
\begin{equation*}
\Lambda=\sum_{\substack{|\alpha:\mathbf{m}|=1\\|\beta:\mathbf{m}|=1}}A_{\alpha,\beta}D_{\mathbf{v}}^{\alpha+\beta}
\end{equation*}
and
\begin{equation*}
L(f,g)=\sum_{|\alpha+\beta:\mathbf{m}|<2}\int_{\mathbb{V}}a_{\alpha,\beta}(x)D_{\mathbf{v}}^{\alpha}f(x)\overline{D_{\mathbf{v}}^{\beta}g(x)}\,dx.
\end{equation*}
Furthermore, it is easy to see that Condition \ref{cond:smooth} ensures that the formal expression \eqref{eq:Hindivergenceform} makes sense. More precisely, if we define the differential operator $H_0$ by
\begin{equation*}
H_0f(x)=\Lambda f(x)+\sum_{|\alpha+\beta:\mathbf{m}|<2}D_{\mathbf{v}}^{\beta}\left\{a_{\alpha,\beta}(x)D_{\mathbf{v}}^{\alpha} f(x)\right\}
\end{equation*}
for $f\in \Dom(H_0):=C_0^{\infty}(\mathbb{V})$, then $H_0f=Hf$ whenever $f\in C_0^{\infty}(\mathbb{V})$. More is true.
\begin{proposition}\label{prop:ESA}
Assume Conditions \ref{cond:meas}-\ref{cond:constanttopsymbol} hold. For each integer $\kappa\geq 1$, define the linear differential operator $H_0^{\kappa}$ by
\begin{equation*}
H_0^{\kappa}f=(H_0)^{\kappa} f
\end{equation*}
with domain $\Dom(H_0^{\kappa})=C_0^\infty(\mathbb{V})$. Then the following properties hold:
\begin{enumerate}
\item There are smooth functions $b_{\alpha,\beta}=\overline{b_{\beta,\alpha}}$ for $|\alpha+\beta:\mathbf{m}|<2\kappa$ and real constants $B_{\alpha,\beta}=B_{\beta,\alpha}$ for $|\alpha:\mathbf{m}|=|\beta:\mathbf{m}|=\kappa$ for which
\begin{eqnarray}\label{eq:HkappaFormal}\nonumber
H_0^{\kappa}f&=&\Lambda^{\kappa}f+\sum_{|\alpha+\beta:\mathbf{m}|<2\kappa}D_{\mathbf{v}}^{\beta}\left\{b_{\alpha,\beta}D_{\mathbf{v}}^{\alpha}f\right\}\\
&=&\sum_{\substack{|\alpha:\mathbf{m}|=\kappa\\|\beta:\mathbf{m}|=\kappa}}D_{\mathbf{v}}^{\beta}\left\{B_{\alpha,\beta}D_{\mathbf{v}}^{\alpha}f\right\}+\sum_{|\alpha+\beta:\mathbf{m}|<2\kappa}D_{\mathbf{v}}^{\beta}\left\{b_{\alpha,\beta}D_{\mathbf{v}}^{\alpha}f\right\}
\end{eqnarray}
for $f\in C_0^{\infty}(\mathbb{V})$.
\item $H_0^{\kappa}$ with initial domain $Dom(H_0^{\kappa})=C_0^{\infty}(\mathbb{V})$ is essentially self-adjoint, its closure is precisely the self-adjoint operator $H^{\kappa}$ (defined as the $\kappa$th power of $H$) and
\begin{equation*}
\Dom(H^{\kappa})=W_{\mathbf{v}}^{\mathbf{2\kappa \mathbf{m}},2}(\mathbb{V}).
\end{equation*}
\end{enumerate}
\end{proposition}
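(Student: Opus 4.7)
For item (1), I would argue by induction on $\kappa$. The case $\kappa=1$ is immediate from the standing normalization $a_{\alpha,\beta} = A_{\alpha,\beta} \in \mathbb{R}$ for top-order indices. For the inductive step, apply $H_0$ to the inductive representation of $H_0^\kappa$, distribute via the Leibniz rule, and rearrange each resulting term into divergence form. The principal contribution is the composition $\Lambda \cdot \Lambda^\kappa = \Lambda^{\kappa+1}$, whose constant coefficients produce the real symmetric tensor $B_{\alpha,\beta}^{(\kappa+1)}$ with $|\alpha:\mathbf{m}|=|\beta:\mathbf{m}|=\kappa+1$; every other contribution has total order $|\alpha+\beta:\mathbf{m}|<2(\kappa+1)$ and smooth coefficients obtained by differentiating the smooth $a_{\alpha,\beta}$ and $b_{\alpha,\beta}^{(\kappa)}$ of the previous step (this is where Condition \ref{cond:smooth} is essential). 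Hermitian symmetry propagates because $H_0$ is formally symmetric on $C_0^\infty(\mathbb{V})$ by Condition \ref{cond:hermitian}, so each $H_0^\kappa$ is formally symmetric, and a formally symmetric divergence-form operator must have Hermitian coefficient matrix.

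For item (2), I would first note that Proposition \ref{prop:SuperSatisfiesHypothesis1} furnishes $H$ as a non-negative self-adjoint operator on $L^2(\mathbb{V})$, so $H^\kappa$ is self-adjoint by the functional calculus. Condition \ref{cond:smooth} together with the divergence form of $H_0$ implies $H_0(C_0^\infty(\mathbb{V})) \subseteq C_0^\infty(\mathbb{V}) \subseteq \Dom(H)$, and iterating gives $C_0^\infty(\mathbb{V}) \subseteq \Dom(H^\kappa)$ with $H^\kappa|_{C_0^\infty(\mathbb{V})} = H_0^\kappa$. Thus $H_0^\kappa$ is symmetric with $H^\kappa$ as a self-adjoint extension, and both the essential self-adjointness claim and the domain identification will follow once one proves that the graph norm of $H_0^\kappa$ is equivalent to $\|\cdot\|_{W_{\mathbf{v}}^{2\kappa\mathbf{m},2}(\mathbb{V})}$ on $C_0^\infty(\mathbb{V})$: density of $C_0^\infty(\mathbb{V})$ in $W_{\mathbf{v}}^{2\kappa\mathbf{m},2}(\mathbb{V})$ (by a standard cutoff-and-mollifier argument) then forces the closure of $H_0^\kappa$ to be self-adjoint with domain exactly $W_{\mathbf{v}}^{2\kappa\mathbf{m},2}(\mathbb{V})$, and uniqueness of self-adjoint extensions along a common core identifies this closure with $H^\kappa$.

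Establishing the norm equivalence is the main analytic obstacle. Writing $H_0^\kappa = \Lambda^\kappa + R_\kappa$, where $R_\kappa$ collects the divergence terms with $|\alpha+\beta:\mathbf{m}|<2\kappa$, the upper bound is a routine Leibniz expansion yielding a finite linear combination of $D_{\mathbf{v}}^\gamma f$ with $|\gamma:\mathbf{m}|\leq 2\kappa$ and locally bounded coefficients. For the lower bound, I would invoke the Fourier-multiplier bound of Lemma \ref{lem:Scaling} applied to the positive-homogeneous operator $\Lambda^\kappa$, namely $|\xi^\gamma| \leq C_\gamma(R(\xi)^\kappa + 1)$ for $|\gamma:2\kappa\mathbf{m}|\leq 1$, which after Plancherel gives
\begin{equation*}
\|D_{\mathbf{v}}^\gamma f\|_2 \leq C\bigl(\|\Lambda^\kappa f\|_2 + \|f\|_2\bigr) \qquad \text{for all } |\gamma:2\kappa\mathbf{m}|\leq 1.
\end{equation*}
An interpolation-with-small-constant argument in the spirit of the proof of Proposition \ref{prop:SuperSatisfiesHypothesis1} then yields $\|R_\kappa f\|_2 \leq \epsilon\,\|\Lambda^\kappa f\|_2 + M_\epsilon\,\|f\|_2$ for arbitrary $\epsilon > 0$, so absorption produces $\|\Lambda^\kappa f\|_2 \leq C(\|H_0^\kappa f\|_2 + \|f\|_2)$, and reapplying the previous display controls every $D_{\mathbf{v}}^\gamma f$ by the graph norm of $H_0^\kappa$. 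The hard part will be handling the derivatives of the coefficients $b_{\alpha,\beta}$ produced by this Leibniz expansion; since Condition \ref{cond:smooth} only supplies smoothness and not uniform bounds on all derivatives, one must either strengthen the hypothesis implicitly or first obtain the upper bound with locally varying constants on compactly supported $f$ and then exploit the fact that the constant-coefficient principal part $\Lambda^\kappa$ dominates the lower-order terms at large scales to extend the estimate to all of $W_{\mathbf{v}}^{2\kappa\mathbf{m},2}(\mathbb{V})$ by density.
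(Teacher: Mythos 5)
Your proof follows the paper's decomposition $H_0^\kappa=\Lambda^\kappa+\Psi$ (your $R_\kappa$) and establishes precisely the key analytic estimate the paper uses, namely the small-relative-bound inequality $\|R_\kappa f\|_2\leq\epsilon\|\Lambda^\kappa f\|_2+M_\epsilon\|f\|_2$. Two remarks are in order.

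First, there is a gap in how you pass from graph-norm equivalence to essential self-adjointness. Graph-norm equivalence on $C_0^\infty(\mathbb{V})$, together with density, tells you that $\Dom(\overline{H_0^\kappa})=W_{\mathbf{v}}^{2\kappa\mathbf{m},2}(\mathbb{V})$, i.e., that the closure is a \emph{closed symmetric} operator with this domain. It does not by itself tell you that this closure is self-adjoint, nor that $\Dom(H^\kappa)\subseteq W_{\mathbf{v}}^{2\kappa\mathbf{m},2}(\mathbb{V})$; the containment $\overline{H_0^\kappa}\subseteq H^\kappa$ only gives one inclusion. The ``uniqueness of self-adjoint extensions along a common core'' you invoke is exactly what you are trying to prove. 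The clean way to finish, and the route the paper takes, is to observe that $\Lambda^\kappa|_{C_0^\infty(\mathbb{V})}$ is essentially self-adjoint with closure of domain $W_{\mathbf{v}}^{2\kappa\mathbf{m},2}(\mathbb{V})$ (Proposition~\ref{prop:esa}), and then apply the Kato--Rellich theorem (the paper cites Lemma~7.4 of~\cite{Schechter1986}) using your relative bound $\epsilon<1$: this immediately gives essential self-adjointness of $\Lambda^\kappa+R_\kappa$ on $C_0^\infty(\mathbb{V})$ with the same closure domain, after which maximality of self-adjoint extensions identifies the closure with $H^\kappa$. Your estimates are exactly the hypotheses of Kato--Rellich; you simply need to cite the theorem instead of attempting to recover its conclusion by hand.

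Second, the concern you raise at the end about the boundedness of derivatives of the coefficients is a legitimate one: the functions $b_{\alpha,\beta}$ are built from the $a_{\alpha,\beta}$ and their derivatives, and Conditions~\ref{cond:meas} and~\ref{cond:smooth} assert only $L^\infty$ bounds on the coefficients themselves and smoothness, not uniform bounds on higher derivatives. The paper states the estimate $\|\Psi f\|_2\leq\epsilon\|\Lambda^\kappa f\|_2+M_\epsilon\|f\|_2$ as an ``analogous'' application of Lemma~\ref{lem:Scaling} without addressing this, so the implicit extra regularity assumption is not unique to your write-up. You are right that either the hypothesis must be understood to include uniform bounds on the relevant derivatives of the $a_{\alpha,\beta}$, or an additional argument is needed; your proposed localization strategy is one plausible repair but would need to be carried out in detail.
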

\begin{proof}
The first statement follows by direct calculation (Leibniz's rule) keeping in mind that $a_{\alpha,\beta}(x)$ are bounded smooth functions forming a Hermitian matrix at each $x\in\mathbb{V}$. Using integration by parts and the definition of $Q$, for $g\in \Dom(H_0^k)$, we find that
\begin{equation*}
\langle H^{\kappa} f,g\rangle=Q(H^{\kappa-1}f,g)=\langle H^{\kappa-1}f,H_0 g\rangle=\cdots =\langle f,H_0^k g\rangle
\end{equation*}
for all $f\in \Dom(H^{\kappa})$. In view of the self-adjointness of $H^{\kappa}$, this calculation guarantees that $g\in \Dom((H^{\kappa})^*)=\Dom(H^{\kappa})$ and $H^{\kappa}g=H_0^{\kappa}g$ and therefore $H^{\kappa}$ is a self-adjoint extension of the symmetric operator $H_0^{\kappa}$. It remains to show that this operator is essentially self adjoint and characterize the domain of $H^{\kappa}$. 

In view of the first statement, we write
\begin{equation*}
H_0^{\kappa}=\Lambda^{\kappa}+\Psi
\end{equation*}
where $\Psi$ is the symmetric operator
\begin{equation*}
\Psi=\sum_{|\alpha+\beta:\kappa\mathbf{m}|<2}D_{\mathbf{v}}^{\beta}\left\{b_{\alpha,\beta}D_{\mathbf{v}}^{\alpha}\right\}
\end{equation*}
with domain $\Dom(\Psi)=\Dom(H_0^{\kappa})$. It is straightforward to see that $\Lambda^{\kappa}$ is a positive-homogeneous operator with symbol $R(\xi)^{\kappa}$.  Further, observe that $E':=E_{\mathbf{v}}^{2\kappa\mathbf{m}}\in \Exp(\Lambda^{\kappa})$ and so the homogeneous order of $\Lambda^{\kappa}$ is $\mu_{\Lambda^{\kappa}}=\tr E'=\mu_{\Lambda}/\kappa$. An appeal to Proposition \ref{prop:esa} guarantees that $\Lambda^\kappa$, with initial domain $C_0^{\infty}(\mathbb{V})$, is essentially self-adjoint and the domain of its closure is $W_{\mathbf{v}}^{2\kappa\mathbf{m},2}(\mathbb{V})$.

By analogous arguments to those given in the proof of Proposition \ref{prop:SuperSatisfiesHypothesis1} using the fact that $|\alpha+\beta:\kappa\mathbf{m}|<2$ for all multi-indices appearing in $\Psi$, by virtue of Lemma \ref{lem:Scaling} we find that for any $\epsilon>0$, there is $M_{\epsilon}\geq 1$ for which
\begin{equation*}
\|\Psi f\|_2\leq \epsilon \|\Lambda^{\kappa} f\|_2+M_{\epsilon}\|f\|_2
\end{equation*}
for all $f\in C_0^{\infty}(\mathbb{V})$. In view of this estimate, an appeal to Lemma 7.4 of \cite{Schechter1986} ensures that $H_0^{\kappa}=\Lambda^{\kappa}+\Psi$ is essentially self-adjoint and its closure has domain $W_{\mathbf{v}}^{2\kappa\mathbf{m},2}(\mathbb{V})$. Upon noting that $H^{\kappa}$ is a self-adjoint extension of $H_0^{\kappa}$, it is therefore the unique self-adjoint extension and we may conclude at once that
\begin{equation*}
\Dom(H^{\kappa})=W_{\mathbf{v}}^{2\kappa\mathbf{m},2}(\mathbb{V}).
\end{equation*}
\end{proof}

\noindent The following lemma contains the essential estimate needed to verify Hypothesis \ref{hyp:kappa} for a super-semi-elliptic operator whose coefficients satisfy Conditions \ref{cond:meas}-\ref{cond:constanttopsymbol}.

\begin{lemma}
Assume Conditions \ref{cond:meas}-\ref{cond:constanttopsymbol} hold and let $\kappa\geq 1$ be an integer. Then, for any $\epsilon>0$, there is a constant $M_{\epsilon}\geq 1$ for which
\begin{equation*}
|\langle H_{\lambda,\phi}^{\kappa}f,f\rangle-Q_{\Lambda^{\kappa}}(f)|\leq \epsilon Q_{\Lambda^{\kappa}}(f)+M_{\epsilon}(1+R(\lambda))^{\kappa}\|f\|_2^2
\end{equation*}
for all $\lambda\in\mathbb{V}^*$, $\phi\in\mathcal{E}$ and $f\in C_0^{\infty}(\mathbb{V})$.
\end{lemma}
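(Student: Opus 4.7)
The plan is to use the explicit formal-operator representation
\begin{equation*}
H^\kappa g = \Lambda^\kappa g + \sum_{|\alpha+\beta:\mathbf{m}|<2\kappa} D_{\mathbf{v}}^\beta\bigl\{b_{\alpha,\beta}(x)\,D_{\mathbf{v}}^\alpha g\bigr\}, \qquad g\in C_0^\infty(\mathbb{V}),
\end{equation*}
supplied by Proposition \ref{prop:ESA}, in which the $b_{\alpha,\beta}$ are smooth with uniformly bounded derivatives of all orders (inherited from Condition \ref{cond:smooth} together with the constancy of the top-order coefficients in Condition \ref{cond:constanttopsymbol}). Since $e^{-\lambda(\phi)}f\in C_0^\infty(\mathbb{V})$, integration by parts is valid throughout, and one obtains
\begin{equation*}
\langle H^\kappa_{\lambda,\phi}f,f\rangle = \bigl\langle H^\kappa(e^{-\lambda(\phi)}f),\,e^{\lambda(\phi)}f\bigr\rangle = I_1 + I_2,
\end{equation*}
where
\begin{equation*}
I_1 = \sum_{|\alpha:\mathbf{m}|=|\beta:\mathbf{m}|=\kappa} B_{\alpha,\beta}\int_\mathbb{V} D_{\mathbf{v}}^\alpha(e^{-\lambda(\phi)}f)\,\overline{D_{\mathbf{v}}^\beta(e^{\lambda(\phi)}f)}\,dx
\end{equation*}
is the principal piece and $I_2$ is the analogous subprincipal sum with coefficients $b_{\alpha,\beta}$ over $|\alpha+\beta:\mathbf{m}|<2\kappa$.

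Next, the coordinate-wise structure of $\phi$ built into $\mathcal{E}$, together with the Leibniz rule, yields
\begin{equation*}
e^{\lambda(\phi)}D_{\mathbf{v}}^\alpha\bigl(e^{-\lambda(\phi)}f\bigr) = \sum_{\gamma\leq\alpha} P^\alpha_\gamma(\lambda,x)\,D_{\mathbf{v}}^\gamma f,
\end{equation*}
with $P^\alpha_\alpha\equiv 1$ and $|P^\alpha_\gamma(\lambda,x)|\leq C_\alpha(1+R(\lambda))^{|\alpha-\gamma:\mathbf{m}|/2}$ uniformly in $\phi\in\mathcal{E}$, $x\in\mathbb{V}$, and $\lambda\in\mathbb{V}^*$ (this is a strengthening of Lemma \ref{lem:twistedderivative}, using the bound $|\lambda^\eta|\leq C\,R(\lambda)^{|\eta:\mathbf{m}|/2}$ obtained from Lemma \ref{lem:Scaling} via weighted AM--GM). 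Because $\overline{e^{\lambda(\phi)}}=e^{\lambda(\phi)}$, the exponentials cancel in the integrands of $I_1$ and $I_2$, and the diagonal $\gamma=\alpha,\delta=\beta$ contribution to $I_1$ is precisely $Q_{\Lambda^\kappa}(f)$ by the representation $\Lambda^\kappa = \sum B_{\alpha,\beta} D_{\mathbf{v}}^{\alpha+\beta}$ and integration by parts. Each remaining off-diagonal cross term, after Cauchy--Schwarz and Plancherel, reduces to bounding an expression of the form
\begin{equation*}
(1+R(\lambda))^{b} \int_{\mathbb{V}^*} R(\xi)^a |\hat f(\xi)|^2 \,d\xi,
\end{equation*}
where $a=(|\gamma:\mathbf{m}|+|\delta:\mathbf{m}|)/2$ and $b=(|\alpha-\gamma:\mathbf{m}|+|\beta-\delta:\mathbf{m}|)/2$ satisfy $a+b\leq\kappa$. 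The weighted Young inequality, applied with conjugate exponents $p=\kappa/a$ and $q=\kappa/(\kappa-a)$, yields $R(\xi)^a(1+R(\lambda))^b\leq \epsilon R(\xi)^\kappa + M_\epsilon(1+R(\lambda))^\kappa$ whenever $a<\kappa$, and the integral is thereby bounded by $\epsilon Q_{\Lambda^\kappa}(f)+M_\epsilon(1+R(\lambda))^\kappa\|f\|_2^2$.

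The main obstacle is verifying that the weighted Young inequality genuinely applies to every cross term, especially at the saturating boundary $a+b=\kappa$. For $I_2$ this is automatic since $|\alpha+\beta:\mathbf{m}|<2\kappa$ forces $a+b<\kappa$ strictly. For $I_1$, where $a+b=\kappa$ exactly, Young still applies because in every cross term one has $(\gamma,\delta)\neq(\alpha,\beta)$, which forces $b>0$ (some derivative was transferred to a $\lambda$-factor) together with $a<\kappa$, leaving room for the asymmetric $\epsilon$-splitting in Young. A secondary technical point is uniformity in $\phi\in\mathcal{E}$: the constants $C_\alpha$ above depend only on uniform pointwise bounds on sufficiently many derivatives of the coordinate functions $\psi_j$ representing $\phi$, which is why $l$ was fixed with $l\geq\max 2\kappa\mathbf{m}$ in the definition of $\mathcal{F}_l$; combining the finitely many cross-term estimates and choosing $\epsilon$ small then yields the stated bound.
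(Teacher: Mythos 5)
Your overall plan — split $\langle H^\kappa_{\lambda,\phi}f,f\rangle$ into the principal piece (with constant coefficients $B_{\alpha,\beta}$) and the subprincipal piece, Leibniz-expand the twisted derivatives so the diagonal contribution is exactly $Q_{\Lambda^\kappa}(f)$, and then absorb the off-diagonal cross terms by a weighted Young inequality into $\epsilon Q_{\Lambda^\kappa}(f)+M_\epsilon(1+R(\lambda))^\kappa\|f\|_2^2$ — is the same as the paper's, which accomplishes the absorption via Lemma \ref{lem:Scaling}(2). But the specific reduction you state does not quite follow from ``Cauchy--Schwarz and Plancherel.'' After Cauchy--Schwarz you hold a \emph{product of two $L^2$ norms} $\|D_{\mathbf{v}}^\gamma f\|_2\,\|D_{\mathbf{v}}^\delta f\|_2$; converting this to a single integral costs you an AM--GM split $\|u\|\|v\|\le\tfrac{\theta}{2}\|u\|_2^2+\tfrac{1}{2\theta}\|v\|_2^2$, after which the relevant Fourier exponent is $|\gamma:\mathbf{m}|$ (or $|\delta:\mathbf{m}|$), \emph{not} the average $a=(|\gamma:\mathbf{m}|+|\delta:\mathbf{m}|)/2$. (The product of the two norms is in fact $\geq$, not $\leq$, the single integral with the averaged exponent, by log-convexity.) This matters in exactly the case you flag as delicate: when, say, $\gamma=\alpha$ and $\delta<\beta$, the factor $\|D_{\mathbf{v}}^\alpha f\|_2^2$ alone is of full order $\kappa$ in $\xi$, and a symmetric split would produce $\tfrac12\|D_{\mathbf{v}}^\alpha f\|_2^2\asymp Q_{\Lambda^\kappa}(f)$ with no small prefactor. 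The fix is to choose $\theta$ asymmetrically so that the $\epsilon$ always lands on the \emph{strict} factor (the one with $|\alpha-\gamma:\mathbf{m}|>0$, resp.\ $|\beta-\delta:\mathbf{m}|>0$), while the saturated factor absorbs the constant; this is precisely what the paper encodes by first writing $AB-A'B'=(A-A')B+A'(B-B')$ so that every summand carries at least one strictly lower-order factor, and then invoking Lemma \ref{lem:Scaling}(2b) for that factor and (2a) for the other. A small additional slip: the pointwise bound $|\lambda^\eta|\le C\,R(\lambda)^{|\eta:\mathbf{m}|/2}$ fails for small $\lambda$ (the right-hand side tends to $0$ while the left need not); the correct statement, which you in fact use in the final form, is $|\lambda^\eta|\le C(1+R(\lambda))^{|\eta:\mathbf{m}|/2}$. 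Once the reduction step is restated with the non-averaged exponents and the asymmetric weight, your Young argument closes the proof and coincides with the paper's.
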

\begin{proof}
It follows from the previous proposition that, for $\lambda\in\mathbb{V}^*$, $\phi\in \mathcal{E}$ and $f\in C_0^{\infty}(\mathbb{V})$,
\begin{equation*}
H_{\lambda,\phi}^{\kappa} f=(H_0^{\kappa})_{\lambda,\phi}f=e^{\lambda(\phi)}H_0^{\kappa}(e^{-\lambda(\phi)}f)
\end{equation*}
where $H_0^{\kappa}f$ is given by \eqref{eq:HkappaFormal}. With this in mind, integration by parts gives
\begin{equation*}
\langle H_{\lambda,\phi}^{\kappa}f,f\rangle-Q_{\Lambda^{\kappa}}(f)=U(\lambda,\phi,f)+W(\lambda,\phi,f)
\end{equation*}
where
\begin{eqnarray*}
U(\lambda,\phi,f)&=&\sum_{\substack{|\alpha:\mathbf{m}|=\kappa\\|\beta:\mathbf{m}|=\kappa}}B_{\alpha,\beta}\int_{\mathbb{V}}\Big[\left(e^{\lambda(\phi)}D_{\mathbf{v}}^{\alpha}\left(e^{-\lambda(\phi)}f\right)\right)\overline{\left(e^{-\lambda(\phi)}D_{\mathbf{v}}^{\beta}\left(e^{\lambda(\phi)}f\right)\right)}\\
&&\hspace{7cm}-D_{\mathbf{v}}^\alpha f\overline{D_{\mathbf{v}}^{\beta} f}\,\Big]\,dx
\end{eqnarray*}
and
\begin{equation*}
W(\lambda,\phi,f)=\sum_{|\alpha+\beta:\mathbf{m}|<2\kappa}\int_{\mathbb{V}}b_{\alpha,\beta}\left(e^{\lambda(\phi)}D_{\mathbf{v}}^{\alpha}\left(e^{-\lambda(\phi)}f\right)\right)\overline{\left(e^{-\lambda(\phi)}D_{\mathbf{v}}^{\beta}\left(e^{\lambda(\phi)}f\right)\right)}\,dx
\end{equation*}
for $\lambda\in\mathbb{V}^*$, $\phi\in\mathcal{E}$ and $f\in C_0^{\infty}(\mathbb{V})$. Just as we did in the proof to Proposition \ref{prop:SuperSatisfiesHypothesis2}, we write
\begin{multline*}
|U(\lambda,\phi,f)|\\
=\Big|\sum_{\substack{|\alpha:\mathbf{m}|=\kappa\\|\beta:\mathbf{m}|=\kappa}}B_{\alpha,\beta}\int_{\mathbb{V}}\Big[\left(e^{\lambda(\phi)}D_{\mathbf{v}}^{\alpha}(e^{-\lambda(\phi)}f)-D_{\mathbf{v}}^{\alpha}f\right)\overline{e^{-\lambda(\phi)}D_{\mathbf{v}}^{\beta}(e^{\lambda(\phi)}f)}\\
+D_{\mathbf{v}}^{\alpha}f\left(\overline{e^{-\lambda(\phi)}D_{\mathbf{v}}^{\beta}(e^{\lambda(\phi)}f)-D_{\mathbf{v}}^{\beta}f}\right)\Big]dx\Big|\\
\leq C\sum_{\substack{|\alpha:\mathbf{m}|=\kappa\\|\beta:\mathbf{m}|=\kappa}}\int_{\mathbb{V}}|e^{\lambda(\phi)}D_{\mathbf{v}}^{\alpha}(e^{-\lambda(\phi)}f)-D_{\mathbf{v}}^{\alpha}f||e^{-\lambda(\phi)}D_{\mathbf{v}}^{\beta}(e^{\lambda(\phi)}f)|\\
+|D_{\mathbf{v}}^{\alpha}f||e^{-\lambda(\phi)}D_{\mathbf{v}}^{\beta}(e^{\lambda(\phi)}f)-D_{\mathbf{v}}^{\beta}f|dx\\
\leq C\sum_{\substack{|\alpha:\mathbf{m}|=\kappa\\|\beta:\mathbf{m}|=\kappa}}\int_{\mathbb{V}}|e^{\lambda(\phi)}D_{\mathbf{v}}^{\alpha}(e^{-\lambda(\phi)}f)-D_{\mathbf{v}}^{\alpha}f||e^{-\lambda(\phi)}D_{\mathbf{v}}^{\beta}(e^{\lambda(\phi)}f)-D_{\mathbf{v}}^{\beta}f|\\
+|D_{\mathbf{v}}^{\alpha}f||e^{-\lambda(\phi)}D_{\mathbf{v}}^{\beta}(e^{\lambda(\phi)}f)-D_{\mathbf{v}}^{\beta}f|dx.
\end{multline*}
where $C$ is independent of $\lambda$, $\phi$ and $f$. With the help of Lemma \ref{lem:twistedderivative} and the Cauchy-Schwarz inequality, we have
\begin{multline*}
|U(,\lambda, \phi,f)|\leq C \sum_{\substack{|\alpha:\mathbf{m}|=\kappa\\|\beta:\mathbf{m}|=\kappa}}\sum_{\substack{0<\rho_{\alpha}\leq \alpha\\0<\rho_{\beta}\leq \beta}} \sum_{\substack{0<\gamma_\alpha\leq \rho_{\alpha}\\0<\gamma_{\beta}\leq \rho_{\beta}}}  \int_{\mathbb{V}}|\lambda^{\gamma_{\alpha}}||D_{\mathbf{v}}^{\alpha-\rho_{\alpha}}f||\lambda^{\gamma_{\beta}}||D_{\mathbf{v}}^{\beta-\rho_{\beta}}f|\,dx\\
+C\sum_{\substack{|\alpha:\mathbf{m}|=\kappa\\|\beta:\mathbf{m}|=\kappa}}\sum_{0<\rho_{\alpha}\leq \alpha} \sum_{0<\gamma_{\alpha}\leq \rho_{\alpha}} \int_{\mathbb{V}}|\lambda^{\gamma_{\alpha}}||D_{\mathbf{v}}^{\alpha-\rho_{\alpha}}f||D_{\mathbf{v}}^{\beta}|\,dx\\
\leq C\sum_{\substack{|\alpha:\mathbf{m}|=\kappa\\|\beta:\mathbf{m}|=\kappa}}\sum_{\substack{0<\rho_{\alpha}\leq \alpha\\0\leq \rho_{\beta}\leq \beta}} \sum_{\substack{0<\gamma_\alpha\leq \rho_{\alpha}\\0\leq\gamma_{\beta}\leq \rho_{\beta}}}  \int_{\mathbb{V}}|\lambda^{\gamma_{\alpha}}||D_{\mathbf{v}}^{\alpha-\rho_{\alpha}}f||\lambda^{\gamma_{\beta}}||D_{\mathbf{v}}^{\beta-\rho_{\beta}}f|\,dx\\
\leq C\sum_{\substack{|\alpha:\mathbf{m}|=\kappa\\|\beta:\mathbf{m}|=\kappa}}\sum_{\substack{0<\rho_{\alpha}\leq \alpha\\0\leq \rho_{\beta}\leq \beta}} \sum_{\substack{0<\gamma_\alpha\leq \rho_{\alpha}\\0\leq\gamma_{\beta}\leq \rho_{\beta}}}\|\lambda^{\gamma_{\alpha}}D_{\mathbf{v}}^{\alpha-\rho_{\alpha}}f\|_2\|\lambda^{\gamma_{\beta}}D_{\mathbf{v}}^{\beta-\rho_{\beta}}f\|_2
\end{multline*}
where, again, $C$ is independent of $\lambda$, $\phi$ and $f$. For each $\alpha$, $\rho_{\alpha}$ and $\gamma_{\alpha}$ such that $|\alpha:\mathbf{m}|=\kappa$, $0<\rho_{\alpha}\leq\alpha$ and $0<\gamma_{\alpha}\leq\rho_\alpha$, properties of Fourier transform and Lemma \ref{lem:Scaling} guarantee that, for any $\epsilon>0$ there is $M_{\epsilon}\geq 1$ for which
\begin{multline*}
\|\lambda^{\gamma_{\alpha}}D_{\mathbf{v}}^{\alpha-\rho_{\alpha}}f\|_2^2=\|\lambda^{\gamma_{\alpha}}\xi^{\alpha-\rho_{\alpha}}\hat{f}\|_{2^*}^2=\int_{\mathbb{V}^*}|\lambda^{2\gamma_{\alpha}}\xi^{2\alpha-2\rho_{\alpha}}||\hat{f}(\xi)|^2\,d\xi\\
\leq \int_{\mathbb{V}^*}\Big(\epsilon R(\xi)^{\kappa}+M_\epsilon(R(\lambda)+1)^{\kappa}\Big)|\hat{f}(\xi)|^2\,d\xi\\
\leq \epsilon Q_{\Lambda^{\kappa}}(f)+M_{\epsilon}(R(\lambda)+1)^{\kappa}\|f\|_2^2.
\end{multline*}
Similarly, for each $\beta,\rho_{\beta}$ and $\gamma_{\beta}$ such that $|\beta:\mathbf{m}|=\kappa$, $0\leq \rho_{\beta}\leq\beta$ and $0\leq \gamma_\beta\leq \rho_{\beta}$, there is a constant $M$ for which
\begin{equation*}
\|\lambda^{\gamma_{\alpha}}D_{\mathbf{v}}^{\alpha-\rho_{\alpha}}f\|_2^2\leq M\left(Q_{\Lambda^{\kappa}}(f)+(1+R(\lambda))^{\kappa}\|f\|_2^2\right)
\end{equation*}
From these estimates it follows that, for any $\epsilon>0$, there is $M_{\epsilon}\geq 1$ for which
\begin{equation*}
|U(\lambda,\phi,f)|\leq \epsilon Q_{\Lambda^{\kappa}}(f)+M_{\epsilon}(1+R(\lambda))^{\kappa}\|f\|_2^2
\end{equation*}
for all $\lambda\in\mathbb{V}^*$, $\phi\in\mathcal{E}$ and $f\in C_0^{\infty}(\mathbb{V})$. By a similar argument, making use of Lemma \ref{lem:Scaling} and the fact that $W(\lambda,\phi,f)$ consists of ``lower order'' terms whose coefficients $b_{\alpha,\beta}$ are everywhere bounded, an analogous estimate can be made for $W(\lambda,\phi,f)$. From these estimates the lemma follows at once.
\end{proof}
\begin{proposition}\label{prop:SuperDuperSatisfiesHypothesis3}
Assume that Conditions \ref{cond:meas}-\ref{cond:constanttopsymbol} hold. Then $Q$ (and so $Q+C$) satisfies Hypothesis \ref{hyp:kappa}.
\end{proposition}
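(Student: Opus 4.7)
The plan is to bootstrap from the estimate in the preceding lemma, which is only stated for $f\in C_0^\infty(\mathbb{V})$, up to the full domain $\Dom(H_{\lambda,\phi}^\kappa)$. First, apply the preceding lemma with $\epsilon=1/2$ to obtain a constant $M\geq 1$ for which
\begin{equation*}
\tfrac{1}{2} Q_{\Lambda^\kappa}(f)\leq |\langle H_{\lambda,\phi}^\kappa f,f\rangle|+M(1+R(\lambda))^\kappa\|f\|_2^2
\end{equation*}
for all $f\in C_0^\infty(\mathbb{V})$, uniformly in $\phi\in\mathcal{E}$ and $\lambda\in\mathbb{V}^*$. This is precisely the desired form-comparison estimate (with $C=\max(2,2M)$), but only on the smooth compactly supported functions.

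Next, I would show that $C_0^\infty(\mathbb{V})$ is a core for $H_{\lambda,\phi}^\kappa$. Since $H^\kappa$ is essentially self-adjoint on $C_0^\infty(\mathbb{V})$ with $\Dom(H^\kappa)=W_{\mathbf{v}}^{2\kappa\mathbf{m},2}(\mathbb{V})$ by Proposition \ref{prop:ESA}, and since multiplication by $e^{\pm\lambda(\phi)}$ preserves $C_0^\infty(\mathbb{V})$ and, via the Leibniz rule together with the uniform boundedness of derivatives of $\phi\in\mathcal{E}$, also preserves $W_{\mathbf{v}}^{2\kappa\mathbf{m},2}(\mathbb{V})$ continuously in its Sobolev norm, the operator $H_{\lambda,\phi}^\kappa=e^{\lambda(\phi)}H^\kappa e^{-\lambda(\phi)}$ has $C_0^\infty(\mathbb{V})$ as a core.

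Now take $f\in \Dom(H_{\lambda,\phi}^\kappa)$ and a sequence $\{f_n\}\subseteq C_0^\infty(\mathbb{V})$ converging to $f$ in the graph norm of $H_{\lambda,\phi}^\kappa$. Applying the bootstrapped estimate to $f_n-f_m$ and using Cauchy--Schwarz,
\begin{equation*}
\tfrac{1}{2}Q_{\Lambda^\kappa}(f_n-f_m)\leq \|H_{\lambda,\phi}^\kappa(f_n-f_m)\|_2\|f_n-f_m\|_2+M(1+R(\lambda))^\kappa\|f_n-f_m\|_2^2,
\end{equation*}
so $\{f_n\}$ is Cauchy in the form norm associated with $Q_{\Lambda^\kappa}$. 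As $Q_{\Lambda^\kappa}$ is closed (Proposition \ref{prop:DirichletOperator} applied to the positive-homogeneous operator $\Lambda^\kappa$, whose homogeneity follows from $\kappa^{-1}\Exp(\Lambda^\kappa)=\Exp(\Lambda)$), this forces $f\in \Dom(Q_{\Lambda^\kappa})$ with $Q_{\Lambda^\kappa}(f_n)\to Q_{\Lambda^\kappa}(f)$, which simultaneously establishes the required inclusion $\Dom(H_{\lambda,\phi}^\kappa)\subseteq \Dom(Q_{\Lambda^\kappa})$ and lets the bootstrapped estimate pass to the limit. The statement for $Q+C$ follows from that for $Q$ because $(H+C)^\kappa=\sum_{j=0}^\kappa \binom{\kappa}{j}C^{\kappa-j}H^j$ and the lower-order terms can be absorbed into the $(1+R(\lambda))^\kappa\|f\|_2^2$ term after once more invoking Lemma \ref{lem:Scaling}.

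The main obstacle I anticipate is not in the individual steps, each of which is routine, but rather in confirming that $C_0^\infty(\mathbb{V})$ really is a core for $H_{\lambda,\phi}^\kappa$ and that the twisting by $e^{\pm\lambda(\phi)}$ interacts cleanly with the Sobolev space $W_{\mathbf{v}}^{2\kappa\mathbf{m},2}(\mathbb{V})$. The anisotropy of the Sobolev norms together with the non-standard structure of the functions in $\mathcal{E}$ (built from univariate cutoffs applied coordinate-wise) means the Leibniz-rule estimate requires that derivatives of $\phi$ up to order $2\kappa\mathbf{m}$ be controlled; this is exactly why the parameter $l$ in the definition of $\mathcal{F}_l$ was chosen to satisfy $l\geq \max 2\kappa\mathbf{m}$, so the uniformity in $\phi\in\mathcal{E}$ survives.
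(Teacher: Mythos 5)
Your proposal is correct, and it is structurally close to the paper's argument while differing in a couple of meaningful ways. Both proofs establish the required estimate first on $C_0^\infty(\mathbb{V})$ via the preceding lemma (you with $\epsilon=1/2$, the paper with a rearranged version), and both then use density of $C_0^\infty(\mathbb{V})$ to pass to the full domain. Here are the two genuine points of divergence.

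\textbf{Domain inclusion.} The paper obtains $\Dom(H^\kappa_{\lambda,\phi})\subseteq\Dom(Q_{\Lambda^\kappa})$ directly and \emph{before} the estimate: it invokes Proposition \ref{prop:ESA} together with the Leibniz rule to identify $\Dom(H^\kappa_{\lambda,\phi})=W_{\mathbf{v}}^{2\kappa\mathbf{m},2}(\mathbb{V})$ (the twist $e^{\pm\lambda(\phi)}$ leaves this Sobolev space invariant), and then notes that this is contained in $W_{\mathbf{v}}^{\kappa\mathbf{m},2}(\mathbb{V})=\Dom(Q_{\Lambda^\kappa})$. Only afterwards does it extend the estimate by the core/density argument. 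You instead derive the domain inclusion \emph{as a consequence} of the estimate plus the closedness of $Q_{\Lambda^\kappa}$, via the Cauchy argument on $f_n-f_m$ in the graph norm of $H_{\lambda,\phi}^\kappa$. Both routes are valid. The paper's is a bit more direct given that $\Dom(H^\kappa)$ has already been explicitly identified with a Sobolev space; your route is arguably more robust in that it exposes the mechanism by which the a priori inequality drives the domain inclusion, and it would still function in a setting where the explicit domain characterization were unavailable. One tiny simplification to your argument: to know $C_0^\infty(\mathbb{V})$ is a core for $H_{\lambda,\phi}^\kappa$, you only need that $e^{\pm\lambda(\phi)}$ are bounded multiplication operators on $L^2$ preserving $C_0^\infty$; the continuity on $W_{\mathbf{v}}^{2\kappa\mathbf{m},2}$ you invoke is not actually needed for that specific claim (though it is exactly what the paper uses to pin down the domain).

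\textbf{The transition to $Q+C$.} The paper's proof is silent about $Q+C$; it simply proves the estimate for $H$ and asserts the conclusion for $Q+C$ in the statement. You explicitly expand $(H_{\lambda,\phi}+C)^\kappa$ binomially and propose absorbing the lower-order terms. This fills a gap the paper glosses over. For completeness, the absorption needs one more observation: for $1\leq j<\kappa$, the preceding lemma (with $\kappa$ replaced by $j$) gives $|\langle H^j_{\lambda,\phi}f,f\rangle|\leq (1+\epsilon)Q_{\Lambda^j}(f)+M_\epsilon(1+R(\lambda))^j\|f\|_2^2$, and then the pointwise inequality $R(\xi)^j\leq\epsilon' R(\xi)^\kappa+M_{\epsilon'}$ (Young's inequality, in the spirit of Lemma \ref{lem:Scaling}) lets you trade $Q_{\Lambda^j}(f)$ for $\epsilon' Q_{\Lambda^\kappa}(f)+M_{\epsilon'}\|f\|_2^2$, after which everything combines into the required form with $(1+R(\lambda))^\kappa\|f\|_2^2$ on the right.
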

\begin{proof}
By virtue of Proposition \ref{prop:ESA} and Leibniz's rule, we see that
\begin{multline*}
\Dom(H^{\kappa}_{\lambda,\phi})=\{f\in L^2:e^{-\lambda(\phi)}f\in \Dom(H^{\kappa})\}\\
=\{f\in L^2:e^{-\lambda(\phi)}f\in W_{\mathbf{v}}^{2\kappa\mathbf{m},2}(\mathbb{V})\}=W_{\mathbf{v}}^{2\kappa\mathbf{m},2}(\mathbb{V})
\end{multline*}
for all $\lambda\in\mathbb{V}^*$ and $\phi\in\mathcal{E}$ where we fix $\kappa=\min\{n:\mu_{\Lambda}/n<1\}$. Consequently,
\begin{equation*}
\Dom(H^{\kappa}_{\lambda,\phi})=W_{\mathbf{v}}^{2\kappa\mathbf{m},2}(\mathbb{V})\subseteq W_{\mathbf{v}}^{\kappa\mathbf{m},2}(\mathbb{V})=\Dom(Q_{\Lambda^{\kappa}})
\end{equation*}
for all $\lambda\in\mathbb{V}^*$ and $\phi\in\mathcal{E}$. An appeal to the preceding lemma guarantees that, for any $\epsilon>0$, there is $M_{\epsilon}\geq 1$ for which
\begin{eqnarray*}
Q_{\Lambda^{\kappa}}(f)&=&\langle H_{\lambda,\phi}^{\kappa}f,f\rangle+Q_{\Lambda^{\kappa}}(f)-\langle H_{\lambda,\phi}^{\kappa}f,f\rangle\\
&\leq&|\langle H_{\lambda,\phi}^{\kappa}f,f\rangle|+|Q_{\Lambda^{\kappa}}(f)-\langle H_{\lambda,\phi}^{\kappa}f,f\rangle|\\
&\leq &M_\epsilon \left|\langle H_{\lambda,\phi}^{\kappa}f,f\rangle\right|+\epsilon Q_{\Lambda^{\kappa}}(f)+M_{\epsilon}(1+R(\lambda))^{\kappa}\|f\|_2^2
\end{eqnarray*}
for $\lambda\in\mathbb{V}^*$, $\phi\in\mathcal{E}$ and $f\in C_0^{\infty}(\mathbb{V})$. Equivalently,
\begin{equation*}
Q_{\Lambda^{\kappa}}(f)\leq\frac{M_\epsilon}{1-\epsilon}\left|\langle H_{\lambda,\phi}^{\kappa}f,f\rangle\right|+\frac{M_{\epsilon}}{1-\epsilon}(1+R(\lambda))^\kappa\|f\|_2^2
\end{equation*}
for all $\lambda\in\mathbb{V}^*$, $\phi\in\mathcal{E}$ and $f\in C_0^{\infty}(\mathbb{V})$. In view of Propositions \ref{prop:DirichletOperator} and \ref{prop:ESA}, $C_0^{\infty}(\mathbb{V})$ is a core for both $Q_{\Lambda^{\kappa}}$ and $H^{\kappa}$ and so it follows that the above estimate holds for all $\lambda\in\mathbb{V}^*$, $\phi\in\mathcal{E}$ and $f\in \Dom(H^{\kappa})=\Dom(H^{\kappa}_{\lambda,\phi})=W_{\mathbf{v}}^{2\kappa\mathbf{m},2}(\mathbb{V})$, as desired.
\end{proof}

\noindent In view of Propositions \ref{prop:SuperSatisfiesHypothesis1}, \ref{prop:SuperSatisfiesHypothesis2} and \ref{prop:SuperDuperSatisfiesHypothesis3}, an appeal to Theorem \ref{thm:Main} gives our final result for super-semi-elliptic-operators.

\begin{proposition}
Let $Q$ be a $\{2\mathbf{m},\mathbf{v}\}$-super-semi-elliptic form on $L^2(\mathbb{V})$ whose coefficients satisfy Conditions \ref{cond:meas}-\ref{cond:constanttopsymbol} with reference operator $\Lambda$ and associated self-adjoint super-semi-elliptic operator $H$. Let $R$ be the symbol and $\mu_{\Lambda}=|\mathbf{1}:2\mathbf{m}|$ be the homogeneous order of $\Lambda$, respectively. Then the semigroup $T_t=e^{-tH}$ has integral kernel $K_H:(0,\infty)\times\mathbb{V}\times\mathbb{V}\to\mathbb{C}$ satisfying
\begin{equation*}
|K_H(t,x,y)|\leq \frac{C}{t^{\mu_{\Lambda}}}\exp\left(-tMR^{\#}\left(\frac{x-y}{t}\right)+Mt\right)
\end{equation*}
for all $x,y\in\mathbb{V}$ and $t>0$ where $R^{\#}$ is the Legendre-Fenchel transform of $R$ and $C$ and $M$ are positive constants.
\end{proposition}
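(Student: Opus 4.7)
The plan is to combine the three preparatory propositions of this section and reduce to Theorem \ref{thm:Main}, with a small bookkeeping step to handle the fact that Hypothesis \ref{hyp:Garding} is only guaranteed for a shifted form $Q+C$. First, I would invoke Proposition \ref{prop:SuperSatisfiesHypothesis1} to obtain a constant $C\geq 0$ such that the shifted sesquilinear form $Q+C$, defined by $(Q+C)(f,g)=Q(f,g)+C\langle f,g\rangle$ on $\Dom(Q)=W_{\mathbf{v},0}^{\mathbf{m},2}(\mathbb{V})$, satisfies Hypothesis \ref{hyp:Garding} with reference operator $\Lambda$. Since the zeroth-order perturbation by a multiple of the identity does not affect the differential part, Proposition \ref{prop:SuperSatisfiesHypothesis2} applies verbatim to $Q+C$ with the same class $\mathcal{E}$, so Hypothesis \ref{hyp:FormCompare} holds with a possibly enlarged constant $M$. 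Finally, Proposition \ref{prop:SuperDuperSatisfiesHypothesis3} (whose proof only uses the structural properties guaranteed by Conditions \ref{cond:meas}--\ref{cond:constanttopsymbol}, unaffected by a scalar shift) ensures Hypothesis \ref{hyp:kappa}.

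Next I would apply Theorem \ref{thm:Main} directly to $Q+C$. The associated self-adjoint operator is $H+C$, and the theorem produces an integral kernel $K_{H+C}:(0,\infty)\times\mathbb{V}\times\mathbb{V}\to\mathbb{C}$ for the semigroup $e^{-t(H+C)}$ together with positive constants $C',M'$ such that
\begin{equation*}
|K_{H+C}(t,x,y)|\leq \frac{C'}{t^{\mu_{\Lambda}}}\exp\left(-tM'R^{\#}\left(\frac{x-y}{t}\right)+M't\right)
\end{equation*}
for all $t>0$ and $x,y\in\mathbb{V}$.

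To transfer this bound to $H$ itself, I would use the elementary identity $e^{-tH}=e^{tC}e^{-t(H+C)}$, which is immediate from the spectral calculus since $H$ and the scalar $C$ commute. Consequently, the semigroup $T_t=e^{-tH}$ has integral kernel
\begin{equation*}
K_H(t,x,y)=e^{tC}K_{H+C}(t,x,y),
\end{equation*}
and multiplication by $e^{tC}$ is absorbed into the exponential factor by replacing $M'$ with $M:=\max\{M',M'+C\}$ (or simply $M'+C$). This yields the claimed estimate with constants $C$ and $M$ depending on the data of $Q$ and $\Lambda$ but not on $(t,x,y)$.

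There is no genuine obstacle at this stage: the entire difficulty was quarantined in the verification of the three fundamental Hypotheses carried out in Propositions \ref{prop:SuperSatisfiesHypothesis1}--\ref{prop:SuperDuperSatisfiesHypothesis3}, with Hypothesis \ref{hyp:kappa} being the most delicate (relying on the smoothness Condition \ref{cond:smooth} and the constant top-order symbol Condition \ref{cond:constanttopsymbol} to push the perturbation argument through powers of $H$). The only bookkeeping detail worth stating carefully in the write-up is that the possible shift by $C$ does not alter the form of the conclusion of Theorem \ref{thm:Main}, so that a single application suffices.
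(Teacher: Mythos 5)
Your proposal is correct and takes essentially the same route as the paper: the paper's proof of this proposition is precisely the one-line observation that Propositions \ref{prop:SuperSatisfiesHypothesis1}, \ref{prop:SuperSatisfiesHypothesis2} and \ref{prop:SuperDuperSatisfiesHypothesis3} place $Q+C$ under the three fundamental hypotheses, so Theorem \ref{thm:Main} applies, and the paper had already pointed out (just before Proposition \ref{prop:Super}) that the scalar shift is undone via $e^{-tH}=e^{tC}e^{-t(H+C)}$. One small notational caution: you cannot literally replace $M'$ by $M'+C$ \emph{in both} occurrences, since enlarging the constant in front of $-tR^{\#}$ would strengthen the bound in the wrong direction; the $e^{Ct}$ factor is absorbed only into the $+Mt$ term, and the statement should be read with the (standard) understanding that the two occurrences of $M$ need not be identical constants --- the paper itself uses this convention implicitly when it replaces $(MR)^{\#}$ by $MR^{\#}$ via Corollary \ref{cor:MovingConstants} in the proof of Lemma \ref{offdiagonalmachinelem}.
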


\begin{remark}
The above result is weaker than Theorem 5.1 of \cite{Randles2017} in that the latter treats semi-elliptic operators with H\"{o}lder continuous coefficients and allows for the operator's principal part to have variable coefficients. We have included this result because its proof is drastically different from that of Theorem 5.1 of \cite{Randles2017} and relies on the functional-analytic method of E. B. Davies \cite{Davies1995}, as we have adapted and presented in this article. It also illustrates that Davies' method can be extended into the realm in which $\mu_{\Lambda}\geq 1$ ( or $d\geq 2m$ for elliptic operators). As discussed in the following two remarks, we believe this result can be sharpened still while making use of our general theory presented in Theorem \ref{thm:Main}.
\end{remark}
\begin{remark} Condition \ref{cond:smooth}, a strong assumption, was used to establish that the powers of $H$ were sufficiently well behaved under perturbations thus establishing Proposition \ref{prop:SuperDuperSatisfiesHypothesis3}. It remains an open question as to what is the weakest smoothness assumption that can be made on the coefficients of $H$ to verify Hypothesis \ref{hyp:kappa}.
\end{remark}
\begin{remark} In checking the perturbative estimates in the proof of Proposition \ref{prop:SuperDuperSatisfiesHypothesis3}, it was useful to have $C_0^{\infty}(\mathbb{V})$ as a core for $\Dom(H^{\kappa})$. Under our assumptions, this fact relied on the formal expression for the $\kappa$th power of $H$, $H_0^{\kappa}$, to be essentially self-adjoint with closure $H^\kappa$. We ask: To what degree is this necessary?
\end{remark}

\appendix{}
\section{Appendix}

\subsection{The Legendre-Fenchel transform of a  positive-homogeneous polynomial}\label{Appendix:LF}

\noindent In this section, we state some result involving the Legendre-Fenchel transform of a positive-homogeneous polynomial relevant to our study. The results herein can be found in Section 3 of \cite{Randles2017} including their proofs. To this end, let $\mathbb{V}$ be a real $d$-dimensional vector space and $\mathbb{V}^*$ be its dual. Consider a positive-homogeneous polynomial $P:\mathbb{V}^*\to\mathbb{C}$ and set $R=\Re P$. The Legendre-Fenchel tranform of $R$ is the function $R^{\#}:\mathbb{V}\to\mathbb{R}$ defined by
\begin{equation*}
R^{\#}(x)=\sup_{\lambda\in\mathbb{V}^*}\{x(\lambda)-R(\lambda)\}
\end{equation*}
for $x\in\mathbb{V}$. The following proposition captures some useful facts about $R^{\#}$.

\begin{proposition}\label{prop:LegendreContinuousPositiveDefinite}
Let $P$ be a positive homogeneous polynomial with $R=\Re P$. Then $R^{\#}$ is continuous, positive-definite, and for any $E\in\Exp(P)$, $F=(I-E)^*\in\Exp(R^{\#})$. Further, given any polynomial $Q$ on $\mathbb{V}$ and $\epsilon>0$, we have
\begin{equation*}
Q(\cdot)e^{-\epsilon R^{\#}(\cdot)}\in L^\infty(\mathbb{V})\cap L^1(\mathbb{V})
\end{equation*}
and so, in particular, $\lim_{x\to\infty} R^{\#}(x)=\infty$.
\end{proposition}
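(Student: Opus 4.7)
The plan is to handle the four claims---continuity, positive-definiteness, the homogeneity $F=(I-E)^*\in\Exp(R^\#)$, and the decay estimate on $Q\cdot e^{-\epsilon R^\#}$---essentially in that order, so that each can draw on its predecessors.

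First, I would note that $R^\#$ is a pointwise supremum of the affine functions $\lambda\mapsto x(\lambda)-R(\lambda)$, and so is convex and lower semicontinuous as a function to $(-\infty,\infty]$. To show it is in fact finite-valued, it suffices to check that $R$ is itself coercive, which follows from its continuity, its positive-definiteness on the (compact) unit sphere in any chosen norm, and the homogeneity $R(t^E\lambda)=tR(\lambda)$ (obtained by taking real parts of the defining identity for $\Exp(P)$): the standard bootstrap from a sphere to rays yields $R(\lambda)\to\infty$ as $|\lambda|\to\infty$, so the sup in the definition of $R^\#(x)$ is attained. A proper convex function finite everywhere on a finite-dimensional space is automatically continuous, settling the first claim. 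Positive-definiteness is then immediate: $R^\#(x)\geq 0$ by taking $\lambda=0$, and $R^\#(0)=\sup_\lambda(-R(\lambda))=0$ because $R\geq 0$ with $R(0)=0$. For $x\neq 0$ I choose $\lambda_0\in\mathbb{V}^*$ with $x(\lambda_0)>0$ and observe that the function $f(s)=sx(\lambda_0)-R(s\lambda_0)$ satisfies $f'(0)=x(\lambda_0)>0$ (since $R$ is a smooth positive-definite polynomial, forcing $\nabla R(0)=0$), so $f(s)>0$ for small $s>0$ and therefore $R^\#(x)>0$.

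For the homogeneity, I compute
\[
R^\#\bigl(t^{(I-E)^*}x\bigr)=\sup_{\lambda\in\mathbb{V}^*}\bigl\{x(t^{I-E}\lambda)-R(\lambda)\bigr\},
\]
substitute $\mu=t^{-E}\lambda$ (a bijection on $\mathbb{V}^*$), use that $I-E$ and $E$ commute so that $t^{I-E}t^E=t^I=tI$, and invoke $R(t^E\mu)=tR(\mu)$. The expression collapses to $\sup_\mu\{t\,x(\mu)-tR(\mu)\}=tR^\#(x)$, and hence $(I-E)^*\in\Exp(R^\#)$.

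The final claim requires the most work. I would first upgrade positive-definiteness to coercivity: convexity together with $R^\#(0)=0$ yields $R^\#(sx)\geq sR^\#(x)$ for $s\geq 1$ (from the convex combination $x=s^{-1}(sx)+(1-s^{-1})\cdot 0$), and combining with the positive lower bound $m:=\min_{|x|=1}R^\#(x)>0$ (compactness of the unit sphere plus positive-definiteness) produces $R^\#(x)\geq m|x|$ for $|x|\geq 1$. In particular $R^\#(x)\to\infty$ as $|x|\to\infty$, so each annulus $A_n=\{2^n\leq R^\#(x)<2^{n+1}\}$ is bounded. The homogeneity just established gives $A_n=2^{nF}A_0$, whence $|A_n|=2^{n\tr F}|A_0|$ by change of variables, and the operator-norm bound on $2^{nF}$ restricted to the bounded set $\overline{A_0}$ yields $|Q(x)|\leq C(2^n)^{k}$ for $x\in A_n$ and some $k$ depending on $\deg Q$ and on $F$. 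Then
\[
\int_{A_n}|Q(x)|e^{-\epsilon R^\#(x)}\,dx\leq C'(2^n)^{k+\tr F}e^{-\epsilon 2^n},
\]
and summing over $n\geq 0$ converges because the exponential beats any polynomial; the bounded region $\{R^\#<1\}$ contributes a trivially finite amount. This gives $L^1$, and the $L^\infty$ bound is the same pointwise estimate without integration. The coercivity statement $R^\#\to\infty$ is extracted as a byproduct.

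The step I expect to require the most care is the coercivity upgrade: passing from the pointwise statement $R^\#(x)>0$ for $x\neq 0$ to the uniform linear lower bound at infinity. This is the geometric input that couples convexity with positive-definiteness and powers the entire integrability argument. Once it is in hand, the remaining estimates reduce to routine change-of-variables bookkeeping controlled by the homogeneity of $R^\#$.
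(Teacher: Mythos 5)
The paper does not supply its own proof of this proposition; the appendix states explicitly that the results and their proofs appear in Section~3 of \cite{Randles2017}, so a direct comparison is not possible. Taken on its own, your argument is largely sound and its overall plan --- convexity and lower semicontinuity of $R^{\#}$, positive-definiteness via $\nabla R(0)=0$, the change of variables $\mu=t^{-E}\lambda$ to establish $(I-E)^*\in\Exp(R^{\#})$, and the dyadic annulus decomposition scaled by $2^{nF}$ for the integrability estimate --- is a natural and correct route.

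There is, however, one genuine gap, at the very first step. You deduce that the supremum defining $R^{\#}(x)$ is attained (hence finite) from the fact that $R(\lambda)\to\infty$ as $|\lambda|\to\infty$. That alone is not enough: $R(\lambda)=|\lambda|$ is coercive in this sense yet its Legendre--Fenchel transform is $+\infty$ for $|x|>1$. What is needed is \emph{superlinear} growth, $R(\lambda)/|\lambda|\to\infty$, and the sphere-to-rays bootstrap, as you have stated it, does not yield this without a spectral input. The repair is short and uses facts already at hand: either observe that $\nabla R(0)=0$ (which you invoke later for positive-definiteness) forces the restriction of the positive-definite polynomial $R$ to each line through the origin to have degree at least two, so $R$ grows at least quadratically uniformly over the unit sphere; or observe, via Proposition~\ref{prop:OperatorRepresentation}, that the diagonalizable $E\in\Exp(P)$ has eigenvalues $1/(2m_k)\le 1/2$, so that for $|\mu|=1$, $t\ge 1$ one gets $|t^{E}\mu|\le t^{1/2}$ and hence $R(\lambda)=tR(\mu)\ge m|\lambda|^{2}$ for $|\lambda|\ge 1$, where $m=\min_{|\mu|=1}R(\mu)>0$. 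Once supercoercivity is in hand, the supremum is attained, $R^{\#}$ is finite and therefore continuous, and the rest of your argument --- the homogeneity computation, the lower bound $R^{\#}(x)\ge m|x|$, and the annulus bookkeeping --- goes through as written.
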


\subsection{One-parameter contracting groups}

\noindent In what follows, $W$ is a $d$-dimensional real vector space with a norm $|\cdot|$; the corresponding operator norm on $\Gl(W)$ is denoted by $\|\cdot\|$. Of course, since everything is finite-dimensional, the usual topologies on $W$ and $\Gl(W)$ are insensitive to the specific choice of norms. Also, we say that two real-valued functions $f$ and $g$ on $W$ are comparable if, for some positive constant $C$, $C^{-1}f(w)\leq g(w)\leq C f(w)$ for all $w\in W$; in this case we write $f\asymp g$.
 
\begin{definition}
Let $\{T_t\}_{t>0}\subseteq \Gl(W)$ be a continuous one-parameter group. $\{T_t\}$ is said to be contracting if
\begin{equation*}
\lim_{t\rightarrow 0}\|T_t\|=0.
\end{equation*}
\end{definition}
\noindent We easily observe that, for any diagonalizable $E\in\End(W)$ with strictly positive spectrum, the corresponding one-parameter group $\{t^E\}_{t>0}$ is contracting. Indeed, if there exists a basis $\mathbf{w}=\{w_1,w_2,\dots,w_d\}$ of $W$ and a collection of positive numbers $\lambda_1,\lambda_2,\dots,\lambda_d$ for which $Ew_k=\lambda_kw_k$ for $k=1,2,\dots,d$, then the one parameter group $\{t^E\}_{t>0}$ has $t^{E}w_k=t^{\lambda_k}w_k$ for $k=1,2,\dots,d$ and $t>0$. It then follows immediately that $\{t^E\}$ is contracting. 

\begin{proposition}\label{prop:ComparePoly}
Let $Q$ and $R$ be continuous real-valued functions on $W$. If $R(w)>0$ for all $w\neq 0$ and there exists $E\in\Exp(Q)\cap\Exp(R)$ for which $\{t^E\}$ is contracting, then, for some positive constant $C$, $Q(w)\leq C R(w)$ for all $w\in W$. If additionally $Q(w)>0$ for all $w\neq 0$, then $Q\asymp R$.
\end{proposition}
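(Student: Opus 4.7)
The plan is to leverage the contracting hypothesis to reduce the comparison to a statement on a compact ``unit sphere.'' First I would fix any norm $|\cdot|$ on $W$ (with corresponding operator norm $\|\cdot\|$) and set $S = \{w \in W : |w| = 1\}$, which is compact. By continuity, the numbers $M_Q := \sup_S Q$ and $M_R := \sup_S R$ are finite, and since $R > 0$ on $W \setminus \{0\}$ the number $\delta := \inf_S R$ is strictly positive.

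The key auxiliary lemma I would establish is that every nonzero $w \in W$ admits a decomposition $w = t^E u$ with $t > 0$ and $u \in S$. To see this, consider the continuous function $\varphi_w(t) := |t^E w|$ on $(0,\infty)$. Since $\{t^E\}$ is contracting, $\varphi_w(t) \leq \|t^E\| \cdot |w| \to 0$ as $t \to 0$. For the opposite limit, note that $(1/t)^E t^E = I$ because $\{t^E\}$ is a one-parameter group, whence $|w| \leq \|(1/t)^E\| \cdot |t^E w|$, so $\varphi_w(t) \geq |w|/\|(1/t)^E\|$, which tends to infinity as $t \to \infty$ by contractivity applied at $1/t \to 0$. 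The intermediate value theorem then produces $s > 0$ with $\varphi_w(s) = 1$; the assignment $u := s^E w \in S$ together with $t := 1/s$ gives the claimed decomposition $w = t^E u$.

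With this lemma in hand, the homogeneity relations $R(t^E u) = t R(u)$ and $Q(t^E u) = t Q(u)$ reduce the comparison to a bound on $S$. If $M_Q \leq 0$ then $Q(w) = t Q(u) \leq 0 \leq R(w)$ for every nonzero $w$, so $C = 1$ works. If $M_Q > 0$, then for $w = t^E u$ with $u \in S$,
\[
Q(w) = t Q(u) \leq t M_Q = M_Q \cdot \frac{R(w)}{R(u)} \leq \frac{M_Q}{\delta}\, R(w),
\]
giving $C = M_Q/\delta$. At $w = 0$, the homogeneity forces $Q(0) = t Q(0)$ and $R(0) = t R(0)$ for all $t > 0$, hence $Q(0) = R(0) = 0$ and the bound is trivial. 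For the second assertion, if in addition $Q > 0$ on $W \setminus \{0\}$, the same compactness argument gives $m := \inf_S Q > 0$, and repeating the estimate with the roles of $Q$ and $R$ interchanged yields $R \leq (M_R/m) Q$; combined with the first bound this is $Q \asymp R$.

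The only nontrivial step is the sphere-decomposition lemma, and this is precisely where the contracting hypothesis on $\{t^E\}$ is essential: without it the orbit $\{t^E w\}_{t>0}$ of a given $w \neq 0$ might fail either to shrink to the origin as $t \to 0$ or to escape to infinity as $t \to \infty$, and hence might not intersect $S$. Everything else is a straightforward application of the compactness of $S$ together with the scaling identities that define $E \in \Exp(Q) \cap \Exp(R)$.
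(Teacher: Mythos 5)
Your proof is correct and follows essentially the same route as the paper: a compactness argument on the unit sphere, the contracting hypothesis plus the intermediate value theorem to place a scaled copy of any nonzero $w$ on the sphere, and homogeneity to transfer the resulting bound to all of $W$, with the two-sided estimate obtained by symmetry. The only cosmetic difference is that you bound $\sup_S Q/R$ by $\sup_S Q / \inf_S R$ (and handle $\sup_S Q \leq 0$ and $w=0$ as explicit cases), whereas the paper works directly with $\sup_S Q/R$ and disposes of $w=0$ by continuity.
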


\begin{proof}
Let $S$ denote the unit sphere in $W$ and observe that
\begin{equation*}
\sup_{w\in S}\frac{Q(w)}{R(w)}=:C<\infty
\end{equation*}
because $Q$ and $R$ are continuous and $R$ is non-zero on $S$. Now, for any non-zero $w\in W$, the fact that $t^E$ is contracting implies that $t^Ew\in S$ for some $t>0$ by virtue of the intermediate value theorem. Therefore, $Q(w)=Q(t^Ew)/t\leq CR(t^E w)/t=CR(w)$. In view of the continuity of $Q$ and $R$, this inequality must hold for all $w\in W$. When additionally $Q(w)>0$ for all non-zero $w$, the conclusion that $Q\asymp R$ is obtained by reversing the roles of $Q$ and $R$ in the preceding argument. 
\end{proof}

\begin{corollary}\label{cor:MovingConstants} Let $\Lambda$ be a positive-homogeneous operator on $\mathbb{V}$ with symbol $P$ and let $R^{\#}$ be the Legendre-Fenchel transform of $R=\Re P$. Then, for any positive constant $M$, $R^{\#}\asymp (MR)^{\#}$.
\end{corollary}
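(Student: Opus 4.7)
The plan is to reduce the claim to a direct application of Proposition \ref{prop:ComparePoly}, with both $R^{\#}$ and $(MR)^{\#}$ playing the roles of the functions being compared. By Proposition \ref{prop:LegendreContinuousPositiveDefinite}, each of $R^{\#}$ and $(MR)^{\#}$ is continuous and positive-definite on $\mathbb{V}$, since $P$ and $MP$ are both positive-homogeneous polynomials with real parts $R$ and $MR$ respectively. So the main task is to exhibit a single $F\in\End(\mathbb{V})$ which lies in both $\Exp(R^{\#})$ and $\Exp((MR)^{\#})$ and for which the one-parameter group $\{t^F\}$ is contracting.

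For such an $F$, I would first observe that the homogeneity equation \eqref{eq:homofsymbol} is invariant under multiplication of $P$ by the positive constant $M$: if $E\in\Exp(P)$ then $t(MP)(\xi)=M(tP(\xi))=MP(t^{E^*}\xi)=(MP)(t^{E^*}\xi)$, so $\Exp(P)=\Exp(MP)$. Now, because $\Lambda$ is positive-homogeneous, its exponent set contains a diagonalizable endomorphism, and Proposition \ref{prop:OperatorRepresentation} gives the explicit choice $E=E_{\mathbf{v}}^{2\mathbf{m}}\in\Exp(\Lambda)$ whose dual $E^*=E_{\mathbf{v}^*}^{2\mathbf{m}}$ lies in $\Exp(P)=\Exp(MP)$ and is diagonalizable with eigenvalues $1/(2m_k)$ for $k=1,2,\dots,d$. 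Proposition \ref{prop:LegendreContinuousPositiveDefinite} then produces the common element
\begin{equation*}
F=(I-E^*)^*\in\Exp(R^{\#})\cap\Exp((MR)^{\#}).
\end{equation*}

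The remaining step is to verify that $\{t^F\}$ is contracting. Since $E^*$ is diagonalizable with eigenvalues $1/(2m_k)\in(0,1/2]$, the endomorphism $I-E^*$ is diagonalizable on the same basis with strictly positive eigenvalues $1-1/(2m_k)\in[1/2,1)$. Taking adjoints preserves both diagonalizability and the spectrum, so $F$ is diagonalizable with strictly positive spectrum, and therefore $\{t^F\}$ is contracting by the observation recorded just after the definition in the appendix. Applying Proposition \ref{prop:ComparePoly} twice, with the roles of $Q$ and $R$ switched, yields $R^{\#}\asymp (MR)^{\#}$.

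I do not anticipate a genuine obstacle: the only place where care is required is confirming that the common exponent $F$ really generates a contracting group, and this is handled cleanly by invoking the explicit semi-elliptic representation of Proposition \ref{prop:OperatorRepresentation} to pin down the spectrum of $E$. Everything else is bookkeeping built on Propositions \ref{prop:LegendreContinuousPositiveDefinite} and \ref{prop:ComparePoly}.
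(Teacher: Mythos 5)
Your proof is correct and follows essentially the same route as the paper's: invoke Proposition \ref{prop:OperatorRepresentation} for a diagonalizable $E_{\mathbf{v}}^{2\mathbf{m}}\in\Exp(\Lambda)$, use Proposition \ref{prop:LegendreContinuousPositiveDefinite} to get a common exponent $F=(I-E^*)^*=I-E_{\mathbf{v}}^{2\mathbf{m}}$ for both $R^{\#}$ and $(MR)^{\#}$, verify $\{t^F\}$ is contracting, and conclude with Proposition \ref{prop:ComparePoly}. The only cosmetic differences are that you explicitly justify $\Exp(P)=\Exp(MP)$ and check contractivity via the spectrum of $F$ directly, whereas the paper identifies $I-E_{\mathbf{v}}^{2\mathbf{m}}=E_{\mathbf{v}}^{\omega}$ with $\omega_k=2m_k/(2m_k-1)$; these are equivalent observations.
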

\begin{proof}
By virtue of Proposition \ref{prop:OperatorRepresentation}, let $\mathbf{m}\in\mathbb{N}_+^d$ and $\mathbf{v}$ be a basis for $\mathbb{V}$ and for which $E_{\mathbf{v}}^{2\mathbf{m}}\in \Exp(\Lambda)$. In view of Proposition \ref{prop:LegendreContinuousPositiveDefinite}, $R^{\#}$ and $(MR)^{\#}$ are both continuous, positive-definite and have $I-E_{\mathbf{v}}^{2\mathbf{m}}\in \Exp(R^{\#})\cap \Exp((MR)^{\#})$. In view of \eqref{eq:DefofE}, it is easily verified that $I-E_{\mathbf{v}}^{2\mathbf{m}}=E_\mathbf{v}^\omega$ where
\begin{equation}\label{eq:DefOfOmega}
\omega:=\left(\frac{2m_1}{2m_1-1},\frac{2m_2}{2m_2-1},\dots\frac{2m_d}{2m_d-1}\right)\in \mathbb{R}_+^d
\end{equation}
and so it follows that $\{t^{E_{\mathbf{v}}^{\omega}}\}$ is contracting. The corollary now follows directly from Proposition \ref{prop:ComparePoly}.
\end{proof}

\begin{lemma}\label{lem:Scaling}
Let $P$ be a positive-homogeneous polynomial on $W$ and let $\mathbf{n}=2\mathbf{m}\in\mathbb{N}_+^d$ and $\mathbf{w}$ be a basis for $W$ for which the conclusion of Proposition \ref{prop:OperatorRepresentation} holds. Let $\kappa\geq 1$ be an integer, set $R=\Re P$, and assume the notation of \eqref{eq:Monomial} in which $\xi^{\alpha}=\xi_{\mathbf{w}}^\alpha$ for each multi-index $\alpha$.
\begin{enumerate}
\item\label{item:Scaling1} Let $\alpha$ be a multi-index for which $|\alpha:2\mathbf{m}|\leq 1$.
\begin{enumerate}
\item There is a constant $M\geq 1$ for which
\begin{equation*}
|\xi^{\alpha}|\leq MR(\xi)+M
\end{equation*}
for all $\xi\in W$. 
\item If additionally $|\alpha:2\mathbf{m}|<1$, then, for any $\epsilon>0$, there is a constant $M_{\epsilon}\geq 1$ for which
\begin{equation*}
|\xi^{\alpha}|\leq \epsilon R(\xi)+M_{\epsilon}
\end{equation*}
for all $\xi\in W$.
\end{enumerate}
\item\label{item:Scaling2} Let $\alpha$ and $\beta$ be multi-indices for which $|\alpha+\beta:2\mathbf{m}|\leq \kappa$.
\begin{enumerate}
\item There are constants $M,M'\geq 1$ for which
\begin{equation*}
|\xi^{\alpha}\lambda^{\beta}|\leq MR(\xi)^{\kappa}+M'(R(\lambda)+1)^\kappa
\end{equation*}
for all $\xi,\lambda\in W$.
\item If additionally $|\alpha:2\mathbf{m}|<\kappa$, then, for any $\epsilon>0$, there is a constant $M_{\epsilon}\geq 1$ for which
\begin{equation*}
|\xi^{\alpha}\lambda^{\beta}|\leq \epsilon R(\xi)^{\kappa}+M_{\epsilon}(R(\lambda)+1)^{\kappa}
\end{equation*}
for all $\xi,\lambda\in W$.
\end{enumerate}
\end{enumerate}
\end{lemma}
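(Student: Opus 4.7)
The plan is to reduce all four claims to a single scaling argument driven by the one-parameter group $\{t^E\}_{t>0}$ with $E := E_{\mathbf{w}}^{2\mathbf{m}}$. By Proposition~\ref{prop:OperatorRepresentation} we have $E \in \Exp(R)$, so $R(t^E\xi) = tR(\xi)$, and since $E$ is diagonalizable with strictly positive spectrum $\{(2m_k)^{-1}\}$, the group $\{t^E\}$ is contracting (as noted in Appendix~B just after the definition of contracting groups). A direct computation in the coordinates associated with $\mathbf{w}$ gives $(t^E\xi)^\alpha = t^{|\alpha:2\mathbf{m}|}\xi^\alpha$ for every multi-index $\alpha$.

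For Part~1, the positive-definiteness of $R$ combined with the scaling $R(t^E\xi) = tR(\xi)$ and compactness of the Euclidean unit sphere imply that the anisotropic level set $S := \{\xi \in W : R(\xi) = 1\}$ is compact and that every $\xi \neq 0$ admits a unique representation $\xi = t^E\xi_0$ with $\xi_0 \in S$ and $t = R(\xi) > 0$. Setting $C_0 = \max_{\xi_0 \in S}|\xi_0^\alpha|$, one finds
\[
\frac{|\xi^\alpha|}{R(\xi)+1} = \frac{t^{|\alpha:2\mathbf{m}|}\,|\xi_0^\alpha|}{t+1} \leq C_0 \, \frac{t^{|\alpha:2\mathbf{m}|}}{t+1}.
\]
When $|\alpha:2\mathbf{m}| \leq 1$, the right-hand side is bounded uniformly in $t > 0$, which (together with the trivial case $\xi = 0$) yields part~(a). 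When $|\alpha:2\mathbf{m}| < 1$, the ratio $t^{|\alpha:2\mathbf{m}|-1} \to 0$ as $t\to\infty$, so for each $\epsilon > 0$ one can choose $T_\epsilon$ so large that $C_0 t^{|\alpha:2\mathbf{m}|-1}\leq \epsilon$ for $t \geq T_\epsilon$; on the compact set $\{R \leq T_\epsilon\}$, $|\xi^\alpha|$ is bounded by some $M_\epsilon$, proving part~(b).

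For Part~2, the key remark is that $R^\kappa$ is continuous, positive-definite, and $\kappa$-homogeneous under $\{t^E\}$; equivalently, $E_{\mathbf{w}}^{2\kappa\mathbf{m}} = E/\kappa \in \Exp(R^\kappa)$. Write $a = |\alpha:2\mathbf{m}|$ and $b = |\beta:2\mathbf{m}|$, so $a + b \leq \kappa$. The edge cases $a \in \{0,\kappa\}$ (hence $b = 0$ or $a = 0$ respectively) follow directly from Part~1(a) applied to the pair $(R^\kappa, 2\kappa\mathbf{m})$, noting that $|\gamma:2\kappa\mathbf{m}| \leq 1$ is equivalent to $|\gamma:2\mathbf{m}| \leq \kappa$. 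Assuming $0 < a < \kappa$, take conjugate Young exponents $p = \kappa/a$ and $q = \kappa/(\kappa - a)$ and apply Young's inequality with a parameter $\epsilon > 0$:
\[
|\xi^\alpha \lambda^\beta| \leq \epsilon\,\frac{|\xi^\alpha|^p}{p} + C_\epsilon\,\frac{|\lambda^\beta|^q}{q}.
\]
By construction, $|\xi^\alpha|^p$ is continuous and $\kappa$-homogeneous under $\{t^E\}$, so (after rescaling $E$ by $1/\kappa$ to make both functions $1$-homogeneous) Proposition~\ref{prop:ComparePoly} yields $|\xi^\alpha|^p \leq M_1 R(\xi)^\kappa$. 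Likewise $|\lambda^\beta|^q$ is continuous with scaling exponent $bq = b\kappa/(\kappa - a) \leq \kappa$ (since $b \leq \kappa - a$); repeating the argument of part~(a) above with $|\lambda^\beta|^q$ in place of $|\xi^\alpha|$ and $R^\kappa$ in place of $R$ gives $|\lambda^\beta|^q \leq M_2(R(\lambda)^\kappa + 1) \leq M_2(R(\lambda)+1)^\kappa$. Taking $\epsilon = 1$ proves Part~2(a); letting $\epsilon > 0$ be arbitrary proves Part~2(b).

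The main obstacle is purely bookkeeping: one must verify that the chosen Young exponents respect the constraint $|\alpha + \beta:2\mathbf{m}| \leq \kappa$ and handle the degenerate cases $\alpha = 0$ or $\beta = 0$ separately. Once the $\{t^E\}$-scaling viewpoint is adopted, every estimate reduces to elementary one-variable analysis on a compact level set of $R$, and the homogeneity of $R^\kappa$ — which is already recorded in the proof of Corollary~\ref{cor:MovingConstants} — packages all the algebra.
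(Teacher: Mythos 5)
Your proposal is correct, and the overall strategy (reduce everything to scaling under the contracting one-parameter group $\{t^E\}$ and a compactness argument on a level set or sphere) is the same as the paper's. The route through the two-variable estimate, however, is genuinely different. The paper proves Item~2(a) by a single scaling argument on the product space $W\oplus W$: it equips $W\oplus W$ with the contracting group $\{t^{E\oplus E}\}$, takes $R(\xi)^{\kappa}+R(\lambda)^{\kappa}$ as the comparison function (continuous and positive-definite on $W\oplus W$), bounds $|\xi^{\alpha}\lambda^{\beta}|/(R(\xi)^{\kappa}+R(\lambda)^{\kappa})$ on the unit sphere of $W\oplus W$, and pushes the bound to all of $W\oplus W$ using $t\geq1$ and $|\alpha+\beta:2\mathbf{m}|\leq\kappa$; Item~2(b) then follows by rescaling $\xi$ alone, and Item~1 is obtained from Item~2 by specializing $\lambda$ and $\kappa=1$. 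You instead prove Item~1 directly on $W$ via the level set $\{R=1\}$, and then decouple the two variables in Item~2 by Young's inequality with conjugate exponents $p=\kappa/a$, $q=\kappa/(\kappa-a)$ (after splitting off the degenerate cases $a\in\{0,\kappa\}$), reducing each resulting one-variable quantity ($|\xi^{\alpha}|^p$ and $|\lambda^{\beta}|^q$) to a single-variable scaling comparison against $R^{\kappa}$ via Proposition~\ref{prop:ComparePoly} and the Item~1 argument. The paper's route avoids case analysis and Young's inequality at the cost of working on the doubled space; yours keeps the analysis on $W$ and is more modular, but needs the bookkeeping for the edge cases and the observation that $|\xi^{\alpha}|^p$, despite not being a monomial when $p\notin\mathbb{N}$, is still continuous and homogeneous of the right degree, which is what lets Proposition~\ref{prop:ComparePoly} apply. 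Both arguments are valid, and the $\epsilon$-dependent Young inequality cleanly recovers the sharpened 2(b) estimate exactly as the paper's $t$-rescaling trick does.
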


\begin{proof}
We first note that Item \ref{item:Scaling1} can be seen as a consequence of Item \ref{item:Scaling2} by considering $\lambda=\phi_{\mathbf{w}}^{-1}(1,1,\dots,1)$ and $\kappa=1$. For the first assertion of Item \ref{item:Scaling2}, we assume that $|\alpha:2\mathbf{m}|+|\beta:2\mathbf{m}|=|\alpha+\beta:2\mathbf{m}|\leq \kappa$ and consider the contracting group $\{t^{E\oplus E}\}=\{t^{E}\oplus t^{E}\}$ on $W\oplus W$ where $E=E_{\mathbf{w}}^{2\mathbf{m}}\in \End(W)$. Because $R$ is a positive-definite polynomial, $R(\xi)^{\kappa}+R(\lambda)^\kappa$ is continuous and positive-definite on $W\oplus W$. Let $|\cdot |$ be a norm on $W\oplus W$ and respectively denote by $B$ and $S$ the corresponding unit ball and unit sphere in this norm. Observe that
\begin{equation*}
M:=\sup_{(\xi,\lambda)\in S}\frac{|\xi^{\alpha}\lambda^{\beta}|}{R(\xi)^{\kappa}+R(\lambda)^{\kappa}}<\infty.
\end{equation*}
Given any $(\xi,\lambda)\in W\oplus W\setminus B$, because $\{t^{E\oplus E}\}$ is contracting, it follows from the intermediate value theorem that, for some $t\geq 1$, $t^{-(E\oplus E)}(\xi,\lambda)=(t^{-E}\xi,t^{-E}\lambda)\in S$ and therefore
\begin{eqnarray*}
|\xi^{\alpha}\lambda^{\beta}|&=& t^{(|\alpha:2\mathbf{m}|+|\beta:2\mathbf{m}|)}|(t^{-E}\xi)^{\alpha}(t^{-E}\lambda)^{\beta}|\\
&\leq & t^{|\alpha+\beta:2\mathbf{m}|}M\left(R(t^{-E}\xi)^\kappa+R(t^{-E}\lambda)^{\kappa}\right)\\
&\leq &t^{\kappa}M((t^{-1}R(\xi))^{\kappa}+(t^{-1}R(\lambda))^{\kappa})\\
&\leq &M(R(\xi)^{\kappa}+R(\lambda)^{\kappa}).
\end{eqnarray*}
Upon noting that $|\xi^{\alpha}\lambda^{\beta}|$ is bounded as $(\xi,\lambda)$ varies over the compact set $B$, there exists $M'\geq M$ for which $|\xi^{\alpha}\lambda^\beta|\leq M'$ for all $(\xi,\lambda)\in B$. Putting these estimates together, we obtain
\begin{equation*}
|\xi^{\alpha}\lambda^{\beta}|\leq M (R(\xi)^{\kappa}+R(\lambda)^{\kappa})+M'\leq MR(\xi)+M'(R(\xi)+1)^{\kappa}
\end{equation*}
for all $(\xi,\lambda)\in W\oplus W$. 

To verify our final assertion, we appeal to the preceding estimate to see that
\begin{eqnarray*}
|\xi^{\alpha}\lambda^{\beta}|&=&t^{-|\alpha:2\mathbf{m}|}|(t^{E}\xi)^{\alpha}\lambda^{\beta}|\\
&\leq& t^{-|\alpha:2\mathbf{m}|}\left(MR(t^{E}\xi)^{\kappa}+M'(R(\lambda)+1)^{\kappa}\right)\\
&\leq &Mt^{(\kappa-|\alpha:2\mathbf{m}|)}R(\xi)^{\kappa}+M't^{-|\alpha:2\mathbf{m}|}(R(\lambda)+1)^{\kappa}
\end{eqnarray*}
for all $\xi,\lambda\in W$ and $t>0$. Upon noting that $|\alpha:2\mathbf{m}|<\kappa$, the desired result follows by choosing $t$ sufficiently small.
\end{proof}

\section*{\large Acknowledgment.}

The second author was supported by the National Science Foundation under Grant No. DMS-1707589. The authors would like to thank Ms. Darby Beaulieu and Dr. Christopher Nosala for proofreading sections of this article.

\noindent Evan Randles: Department of Mathematics \& Statistics, Colby College, 5834 Mayflower Hill, Waterville, ME 04901.
\newline E-mail: evan.randles@colby.edu\\

\noindent Laurent Saloff-Coste: Department of Mathematics, Cornell University, Ithaca NY 14853.
\newline E-mail: lsc@math.cornell.edu

\end{document}